\DeclareMathOperator{\supp}{supp}
\DeclareMathOperator{\diam}{diam}
\DeclareMathOperator{\Span}{span}
\title{On the Design of Locking Free Ghost Penalty Stabilization and the Relation to CutFEM with Discrete Extension} 
\author{Erik Burman \mbox{  } Peter Hansbo \mbox{  } Mats G. Larson}
\date{\today}
\begin{document}

\maketitle

\begin{abstract}
In this note, we develop a new stabilization mechanism for cut finite element methods that generalizes previous 
approaches of ghost penalty type in two ways: (1) The quantity that is stabilized and  (2) The choice of elements 
that are connected in the stabilization. In particular, we can stabilize functionals of the discrete function such as 
finite element degrees of freedom.  We subsequently show that the kernel of our ghost penalty operator defines 
a finite element space based on discrete extensions in the spirit of those introduced in {\emph{Burman, E.; Hansbo, P. and Larson, M. G., CutFEM Based on Extended Finite Element Spaces, arXiv2101.10052, 2021,}} \cite{BurHanLar21a}.
\end{abstract}

\section{Introduction}

\paragraph{Contributions.}
Cut finite element methods are based on embedding a  computational domain into a background mesh that is not 
required to match the boundary leading to so-called cut elements at the boundary.  Adding stabilization terms, we can 
control the variation of the discrete functions close to the boundary, which allows us to prove stability,  condition number 
estimates, and optimal order a priori error estimates.  Alternatively, we may use a discrete extension operator and solve 
the problem in a subspace of the finite element space where the unstable degrees of freedom are eliminated in such a way that optimal order approximation bounds are retained. These two approaches have the same goal: to stabilize the method but appear very different at first glance. 

In this note, we show that the definition of stabilization terms,  added to the weak statement,  may be generalized in two 
ways: (1) The stabilized quantity may be some functional of the discrete function, for instance, finite element degrees freedom. This allows us to stabilize the unstable modes more precisely than standard approaches, which may be viewed as element-based. (2) The choice of elements that are connected. Typically,  face neighbors, or connected patches are used, but we may stabilize by connecting elements intersecting the boundary to an element within a distance proportional to the mesh parameter.  We show that the generalized stabilization form fits into the standard 
abstract requirements, and as a consequence, we obtain stable and optimal order convergent methods for second-order 
elliptic problems. \textcolor{black}{Comparing the results obtained herein with those of \cite{BurHanLar21a} and the companion paper to the present work \cite{BHLL22}, it is straightforward to extend the results to elliptic problems of higher order.}

We also show that for a robust design of the ghost penalty, one may let the stabilization parameter tend to infinity without introducing locking.  The limit corresponds to strong enforcement of certain algebraic constraints, which are identical to constraints implemented in specific extension operator frameworks.  This illustrates the very close connection between stabilization and extension approaches.

\paragraph{Earlier Work.}
The idea of extending polynomial approximation from the interior to the boundary to enhance the stability of a numerical scheme was first introduced in \cite{HR09} for a fictitious domain method. That a similar effect, with the additional control of the condition number of the system matrix, could be achieved using penalty terms was discussed in \cite{Bu10} and further developed in \cite{BH12, MLLR14a, MLLR14b, Pre18, LM19} in the context of CutFEM methods using Nitsche's method for the weak imposition of interface conditions and in \cite{BHL15} in the context of unfitted finite element approximation of surface PDEs. A parallel development has considered achieving stability by agglomerating boundary and bulk elements, effectively extending the polynomial approximation space from the inside up to the boundary. In the context of nonconforming FEM, this was first introduced for fictitious domain methods using discontinuous Galerkin methods in \cite{JL13} and then for hybrid high order methods for interface problems in \cite{BE18}. Agglomeration is straightforward when discontinuous functions are considered for approximation but more delicate if the spaces have to be conforming. The first approach to agglomeration using $C^0$ approximation spaces was proposed in \cite{HWX17}, using element merging and hanging nodes. The approach using an extension of shape functions was then discussed in a series of papers \cite{BadVer18, BMV18, NB21, badia2022robust}. A general framework for discrete extension operators was then proposed in \cite{BurHanLar21a}, allowing for higher conformity of the FEM spaces. In a similar spirit, robust extension operators for splines were recently introduced in \cite{BHLL22}. A weak
stabilization based on penalizing the difference of a general finite element function and its extended counterpart was proposed in \cite{BNV22}. The objective of the present work is to detail under what conditions the Ghost penalty term is robust and propose a penalty term acting directly on degrees of freedom, which naturally connects to the discrete extension operators introduced in \cite{BurHanLar21a}.

\paragraph{Outline.}
In section 2, we introduce the general framework for the design of Ghost penalty terms and show how it applies to some examples from the literature. We also present nodal stabilization and discuss its implementation. The notion of locking is introduced, and the design criteria necessary to avoid locking are given. Section 3 is devoted to the analysis of the methods. First, we consider what conditions must be satisfied by the penalty term. Then we show that under certain sufficient conditions on the localization of the couplings in the penalty term, locking does not occur even for very large stabilization parameters. The paper's main contribution is to show that nodal stabilization satisfies the conditions for a robust and accurate ghost penalty term. Finally, in section 4, the theory is illustrated in some numerical examples.

\section{The Stabilization Framework}
We develop a general framework for stabilization that relaxes current approaches in two ways. First, we allow more general 
choices of how the stabilization connects elements, and second, we allow stabilization of more general quantities, including functionals. Typical choices for functionals are the degree of freedom that enable stabilization of individual degrees of freedom and obtaining a stabilization where the penalty parameter may tend to infinity without inducing locking.  

\subsection{The Mesh and Finite Element Spaces}

\begin{itemize} 

\item Let $\mcT_{h,0}$ be a quasiuniform mesh, with mesh parameter $h \in (0,h_0]$ 
consisting of closed elements $T$,  on a closed polygonal domain 
$\Omega_0 \subset \IR^d$. Let $V_{h,0}$ be a finite element space on $\mcT_{h,0}$,
\[
V_{h,0} := \{v_h \in C^0(\Omega_0) : v_h\vert_{T} \in \mathbb{P}_k(T) ,\quad \forall T \in \mcT_{h,0}\}
\]
where $\mathbb{P}_k(T)$ denotes the set of polynomials of degree less than or equal to $k$ on the simplex $T$. We here consider the setting of $C^0$ finite elements, but the discussion below is easy to extend to the case of smoother approximation spaces using the ideas from \cite{BurHanLar21a}.
\item Let $\Omega \subset \Omega_0$ be a given closed domain and let $\mcTh = \{ T \in \mcT_{h,0} : T \cap \Omega \neq \emptyset \}$ be the active mesh and let $\Omega_h = \cup_{T \in \mcTh} T$. Let $V_h  = V_{h,0} |_{\Omega_h}$ 
be the active finite element space.  Let $V_{h,T} = V_h |_T$ be the local finite element space obtained by considering the restriction of any $v_h \in V_h$ to the element $T$.

\item Let $\mcB_h = \{ \varphi_i : i \in I \}$ be the global finite element basis in $V_h$ indexed by the set $I$.  For each $T \in \mcTh$ let 
$\mcB_{h,T} = \{ \varphi_{i,T} = \varphi_i |_T : i \in I_T\}$ be the element finite element basis in $V_{h,T}$, with  
$I_T \subset I$ the indices such that $T \subset \supp(\varphi_i)$.   Let $\mcB_{h,T}^* = \{ \varphi_{i,T}^* : i \in I_T \}$ 
be the element degrees of freedoms such that $\varphi^*_{j,T}(\varphi_{i,T}) = \delta_{ij}$ and note that $\Span(\mcB_{h,T}) = V_{h,T}^*$.  Since we have a conforming finite element space we have, $\varphi^*_{i,T'}(w) = \varphi^*_{i,T''}(w)$ for 
any two elements  $T'$ and $T''$  in the support of $\varphi_i$ and $w \in V_h |_{\supp(\varphi_i)}$.

\end{itemize}
The $L^2$-inner product over some domain $X$ will be denoted by 
\begin{equation}
(v,w)_X := \int_X u w ~\mbox{d}X, \mbox{ with norm } \|v\|_X := (v,v)_X^{\frac12}.
\end{equation}
For symmetric positive semi-definite bilinear forms $s(v,w)$ the associated (semi-) norm will be denoted $\|v\|_s:= s(v,v)^{\frac12}$.

\subsection{Definition of Stabilization Forms}
\textcolor{black}{
This section will propose some generic design criteria for ghost penalty stabilizations. We will then show that several known stabilizations enter the proposed framework and that this abstract design leads to methods with the desired properties. For simplicity, we restrict the presentation to methods based on extensions of polynomials on an element. Another possibility is to consider a patch of elements and then first project to a global polynomial on the patch that is then extended. The below arguments can be straightforwardly extended to that case.
}
\begin{itemize}

\item Consider two elements $T_1$ and $T_2$ in $\mcT_h$ and let $v_i^e \in \mathbb{P}_k(\IR^d)$ be the canonical 
extension of $v_i \in V_{T_i}$.  We may then define the jump 
\begin{align}
[v]_{T_1,T_2} = v_1^e - v_2^e \in \mathbb{P}_k(\IR^d)
\end{align}

\item For a symmetric positive semidefinite bilinear form $b: \mathbb{P}_k(\IR^d) \times \mathbb{P}_k(\IR^d) \rightarrow \IR$ 
and a pair of elements $T_1,T_2\in \mcT_h$, we define the stabilization term associated with the triple 
$(b,T_1,T_2)$  by 
\begin{align}
s_{m,b,T_1,T_2}(v,w) = \tau h^{\alpha_m} b([v]_{T_1,T_2},[w]_{T_1,T_2})  
\end{align}
where $\tau>0$ and $\alpha_m$ are parameters. Here,  $\alpha_m$ is determined in such a way that the form 
stabilises the $H^m$ norm for $m=0,1$, and $\tau>0$ is a stabilization parameter that typically is determined by the stability analysis. 

\item Let  $\mcT_h = \mcT_h^L \cup \mcT_h^S$ be a partition of the elements, into two subsets called large and 
small elements, where 
\begin{equation}
\mcT_h^L = \{ T \in \mcT_h : | T \cap \Omega | \geq  \gamma |T| \},\qquad \mcT_h^S = \mcT_h \setminus \mcT_h^L
\end{equation}
with $\gamma \in (0,1]$.  

\item 
Let $S_h: \mcT^S_h \rightarrow \mcT^L_h$ be a mapping, which assigns a large element $S_h(T) \in \mcT_h^L$ 
to each small element $T\in \mcT_h^S$.  We will focus on stabilization terms 
of the form
\begin{align}\label{eq:sbtsh_def}
s_{m,b,T,S_h(T)}(v,w) = \tau h^{\alpha_m} b([v]_{T,S_h(T)}, [w]_{T,S_h(T)}), \qquad T \in \mcT_h^S
\end{align}

\item Given a set $\mcS$ of triples of the form $(b,T,S_h(T))$ we define 
\begin{align}\label{eq:sh-general}
\boxed{s_{h,m}(v,w) = \sum_{(b,T,S_h(T) \in \mcS} s_{m,b,T,S_h(T)}(v,w) }
\end{align}

\item  For $\psi_T \in V_{h,T}^*$ we define the stabilization form
\begin{equation}
b_{m,T}(p,q) = \tau h^{\alpha_m} \psi_T(p) \psi_T(q), \qquad p,q \in \mathbb{P}_k(\IR^d) 
\end{equation}
which gives
\begin{equation}\label{eq:sh-functional-indiv}
s_{m,b_T,T, S_h(T)} (v,w) = \tau h^{\alpha_m} \psi_T([v]_{T,S_h(T)}) \psi_T([w]_{T,S_h(T)})
\end{equation}
An important special case is $\psi = \varphi_{i,T}^*$ which enables control of degree of 
freedom $i$.  To define which nodes need stabilization we let   
\begin{equation}\label{eq:indesx-split}
I = I^S \cup I^L
\end{equation}
be a partition of the global index set $I$ into the indices $I^S$ such that the corresponding basis 
functions does not contain any large element in their support and the complement $I^L = I \setminus I^S$.  
For each $i \in I^S$ let $T_i$ be an element such that $T_i \subset \supp(\varphi_i)$,  and define
\begin{align}\label{eq:sh-functional}
\boxed{s_{h,m}(v,w) 
= \sum_{i\in I^S}   \tau h^{\alpha_m} \varphi^*_{i, T_i} ([v]_{T_i,S_h(T_i)}) \varphi^*_{i,T_i}([w]_{T_i,S_h(T_i)})}
\end{align}
This construction enables us to stabilize individual degrees of freedoms precisely, and we refer to it as 
nodal stabilization.

\item The semi-norm induced by the stabilization will be denoted
\begin{equation}
 \| v \|^2_{s_{h,m}} = s_h(v,v)
\end{equation}
where we recall that subscript $m$ will take the values $0$ or $1$ depending on the stabilization is designed 
to give stability in the $L^2$-norm or the $H^1$-norm.
\end{itemize}

\subsection{Implementation}

Let us for simplicity consider piecewise linear elements on a triangulation and the nodal stabilization form (\ref{eq:sh-functional}). For each 
$i \in I^S$ we pick $T_i \subset \supp(\varphi_i)$ and we let $S_h(T_i)$ be an element in $\mcT_h^L$ close to 
$T$.  For instance,  $S_h(T_i)$ can be the element in $\mcT_h^L$ closest to $T_i$.  The functional 
$\varphi^*_{i,T_i}(v)$ is simply the nodal value in node $i$, denoted by $x_i$,  i.e., 
\begin{equation}
\varphi^*_i(v) = v(x_i) = \hatv_i
\end{equation}
Thus the stabilizing term for node $i$ takes the form 
\begin{equation}
s_{m,b,T_i, S_h(T_i)} (v,w) = \tau h^{\alpha_m} ([v(x_i) ]_{T_i,S_h(T_i)}) ([w(x_i)]_{T_i,S_h(T_i)})
\end{equation}
where 
\begin{equation}
[v(x_i) ]_{T_i,S_h(T_i)} = v(x_i) - \sum_{j \in I_{S_h(T_i)}} v(x_j) \varphi_{j,S_h(T_i)}^e (x_i)
\end{equation}
and $\{\varphi_{j,S_h(T_i)}\}_{j \in I_{S_h(T_i)}}$ is the element basis on element $S_h(T_i)$.
In matrix form we get  
\begin{equation}
s_{m,b,T_i, S_h(T_i)} (v,w) =\tau h^{\alpha_m} \hatv \cdot \widehat{B}_i \cdot \hatw
\end{equation}
with 
\begin{equation}
\widehat{B}_i = \widehat{\omega}_i \otimes \widehat{\omega}_i 
\end{equation}
where $\widehat{\omega}_i$ is the vector 
\begin{align}
\widehat{\omega}_i =  e_i - \sum_{j \in I_{S_h(T)}}  \varphi_{j,S_h(T_i)}^e (x_i) e_j
\end{align}
with four non zero elements,  $\{e_j\}_{j\in I}$ is the canonical basis in $\IR^N$ with $N = |I|$,  and 
$\varphi_{j,T}^e$ is the canonical extension of the elements basis functions from $S_h(T_i)$ to $\IR^d$.  
For other standard finite elements spaces that satisfies the Ciarlet definition,  see \cite{BreSco},  we 
have the same implementation with the modification that the number of basis functions on the elements 
are different.  Thus the implementation is very simple.

\subsection{Examples of Stabilization Forms}\label{sec:examples}

Below we include a couple of examples of common stabilization forms to illustrate how they fit into 
the framework and to emphasize that the proposed stabilization forms (\ref{eq:sh-general}) and  
(\ref{eq:sh-functional}) is indeed a natural extension of previous terms.

\paragraph{Example 1. (See \cite{Bu10}).} We may fit the standard Ghost or Face penalty in the framework as follows.  First 
let $\mcF_h$ be the set of all internal faces in $\mcT_h$  that belong to an element $T$ that intersects the 
boundary.  For piecewise linears the stabilization term takes the form 
\begin{equation}\label{eq:face-penalty}
s_{h,m}(v,w) = \sum_{F \in \mcF_h} h^{3-2m} ([ \nabla_n v ],[\nabla w])_F 
\end{equation}
where $m=0,1$, and
\begin{align}
[\nabla_n v ] = \nabla_{n_1} v_1 + \nabla_{n_2} v_1 
\end{align}
with $T_1$ and $T_2$ the elements sharing face $F$ and $v_i = v|_{T_i}$.  To set this term into our 
framework we have for each face the ordered triple $(b_F, T_1,T_2)$ where 
\begin{equation}
b_F(p,q) = (\nabla_{n_1} p,  \nabla_{n_1} q)_F, \qquad p,q \in \mathbb{P}_k(\IR^d)
\end{equation}
Then taking $\alpha_m = 3 - 2m$ we get 
\begin{align}
s_{b,T_1,T_2}(v,w) &= \tau h^{3 - 2m}  b_F ( [ v ]_{T_1,T_2}, [ w ]_{T_1, T_2} ) 
\\
&= \tau h^{3 - 2m} (\nabla_{n_1} [v]_{T_1,T_2} \nabla_{n_1} [v]_{T_1,T_2} )_F
\\
&= \tau h^{3 - 2m} ( [\nabla_n v], [\nabla_n v] )_F
\end{align} 
 where we used the identity 
 \begin{equation}
 \nabla_{n_1} [ v ]_{T_1,T_2} = \nabla_{n_1} (v_1 - v_2 ) = \nabla_{n_1} v_1 + \nabla_{n_2} v_2  = [ \nabla_n v ]
 \end{equation}
which holds since $n_2 = - n_1$ on $F$.  

\paragraph{Example 2. See \cite{Pre18,LM19}.} Taking
\begin{align}
b_F(p,q) = (\nabla p, \nabla q)_{T_1 \cup T_2}, \qquad p,q \in \mathbb{P}_k(\IR^d) 
\end{align}
and $\alpha_1 = 0$ gives us the form 
\begin{equation}
s_{1,b_F,T_1,T_2}(v,w) = (\nabla [v]_{T_1,T_2}, \nabla [w ]_{T_1,T_2})_{T_1\cup T_2}
= (\nabla (v_1^e - v_2^e) , \nabla (w_1^e - w_2^e) )_{T_1\cup T_2}
\end{equation}
which can be used to control the $H^1$ seminorm. Alternatively using the $L^2$ product 
\begin{align}
b_F(p,q) = (p, q)_{T_1 \cup T_2} 
\end{align}
with $\alpha_m = -2m$, $m=0,1$, gives us the form 
\begin{equation}\label{eq:face-penalty-elem}
s_{m,b_F,T_1,T_2}(v,w) = \tau h^{-2m} ([v]_{T_1,T_2},  [w ]_{T_1,T_2})_{T_1 \cup T_2}
= \tau h^{-2} (v_1 - v_2, w_1 - w_2 )_{T_1 \cup T_2}
\end{equation}
which can be used to control the $L^2$ norm and $H^1$ norm for $m=0$ and $m=1$, 
respectively.

\paragraph{Example 3.} For each $T \in \mcT_h^S$ 
we consider the triple $(b_T, T, S_h(T))$ with 
\begin{equation}
b_{1,T}(p,q) = (\nabla p, \nabla q)_T, \qquad p,q \in \mathbb{P}_k(\IR^d) 
\end{equation}
leading to the stabilization form 
\begin{equation}\label{eq:gradstab}
s_{1,b_T,T, S_h(T)} (v,w) = \tau (\nabla [v]_{T,S_h(T)} , \nabla [w]_{T,S_h(T)})_T
\end{equation}
Several variants are possible, for instance, we may take the domain of integration to be 
$T\cup S_h(T)$ or $S_h(T)$,  we can use the $L^2$ inner product and $\alpha_m = -2m$, $m=0,1$,  
and we may let $S_h:\mcT_h^S \rightarrow \mcT_h^I$ where $\mcTh^I$ is the set of elements 
residing in $\Omega$, i.e.  $T\subset \Omega$.  The final stabilization term takes the form 
\begin{align}
s_{h,1}(v,w) = \sum_{T \in \mcT_h^S} s_{1,b_T,T, S_h(T)}(v,w) 
= \sum_{T \in \mcT_h^S}   \tau (\nabla [v]_{T,S_h(T)} , \nabla [w]_{T,S_h(T)})_T
\end{align}
This stabilization shows that we may use flexible pairs of elements $(T,S_h(T))$ to construct a 
stabilization not only using face neighbors as in standard Ghost penalty. If we now relax the control on 
element $T$ using instead a functional $\psi_T \in V_{h,T}^*$ we let 
\begin{equation}
b_{m,T}(p,q) = h^{d - 2m} \psi_T(p) \psi_T(q), \qquad p,q \in \mathbb{P}_k(\IR^d) 
\end{equation}
and
\begin{equation}
s_{m,b_T,T, S_h(T)} (v,w) = \tau h^{d-2m} \psi_T([v]_{T,S_h(T)}) \psi_T([w]_{T,S_h(T)})
\end{equation}
which gives (\ref{eq:sh-functional-indiv}).

\paragraph{Example 4. (Stabilized version of the approach from \cite{HR09,BPV20}).} Recalling that in order to establish the coercivity of Nitsche's method 
we need the inverse inequality 
\begin{align}\label{eq:inversetrace}
h \| \nabla_n v \|^2_{T \cap \partial \Omega} \lesssim \| \nabla v \|^2_T
\end{align}
for elements $T \in \mcT_h$ that intersect the boundary,  and we may therefore consider 
\begin{align}
b_T(p,q) = (\nabla_n p, \nabla_n q)_{T\cap \partial \Omega}, \qquad p,q \in \mathbb{P}_k(\IR^d) 
\end{align}
leading to the stabilization form 
\begin{equation}\label{eq:stab4}
s_{b_T,T, S_h(T)} (v,w) = \tau h (\nabla_n [v]_{T,S_h(T)} , \nabla_n [w]_{T,S_h(T)})_{T\cap \partial \Omega}
\end{equation}
We then note that 
\begin{align}
h\| \nabla_n v \|^2_{T \cap \partial \Omega} 
&\lesssim  
h \| \nabla_n (v|_{S_h(T)})^e \|^2_{T \cap \partial \Omega} + h  \| \nabla_n [v]_{T,S_h(T)}  \|^2_{T \cap \partial \Omega}
 \\
 &\lesssim 
 \| \nabla v \|^2_{S_h(T)} +\| v \|^2_{s_{b_T,T, S_h(T)}} 
\end{align}
We conclude that we have constructed a stabilization targeting the inverse 
inequality needed for coercivity. Although this approach leads to stable fictitious domain methods using Nitsche's method, it does not alleviate the ill-conditioning of the system matrix. Note that we omit the index $m$ in the notation in this and the following example since we target a quantity that is not the $L^2$ or $H^1$ norm.

\paragraph{Example 5.} As an alternative to the stabilization in Example 4, resulting in a method that also is well conditioned, we let $n_T$ be the constant 
$L^2$ projection of the normal at $T \cap \partial \Omega$ \textcolor{black}{(or the normal at any point on $T \cap \partial \Omega$)} and note that 
\begin{align}
h\|\nabla_n v \|^2_{T \cap \partial \Omega} 
&\lesssim 
 h\|\nabla_{n_T} v \|^2_{T \cap \partial \Omega} + h \| n - n_T \|^2_{L^\infty(T \cap \partial \Omega}) \| \nabla v \|^2_{T \cap \partial \Omega}
\\
 &\lesssim 
 h\|\nabla_{n_T} v \|^2_{T \cap \partial \Omega} + h^3 \| \nabla v \|^2_{T \cap \partial \Omega}
 \\
 &\lesssim 
 \|\nabla_{n_T} v \|^2_{T} + \| v \|^2_{T}
\end{align}
This estimate suggests defining
\begin{equation}
b_T(p,q) = (\nabla_{n_T} p, \nabla_{n_T} q)_T + (p, q)_T
\end{equation}
and 
\begin{equation}\label{eq:stab5}
s_{b_T,T, S_h(T)} (v,w) = \tau_1 (\nabla_{n_T} [v]_{T,S_h(T)} , \nabla_{n_T} [w]_{T,S_h(T)})_{T}
+ 
\tau_2 ([v]_{T,S_h(T)} ,  [w]_{T,S_h(T)})_{T}
\end{equation}
with parameters $\tau_1$ and $\tau_2$.  Here we get control of the $L^2(T)$ norm as well as the 
control of the normal derivative necessary for the inverse inequality (\ref{eq:inversetrace}) to hold. 

 The $L^2$ 
control in example 5 is needed to derive condition number estimates. More precisely the crucial stability property that allows to prove bounds on the condition number \cite{EG06, Bu10, BH12, BHL15} takes the form
\begin{align}\label{eq:stab-est}
\boxed{
 \| \nabla^m v \|^2_{\Omega_h} \lesssim \| \nabla^m v \|^2_\Omega + \| v \|^2_{s_{h,m}}, \qquad m=0,1, \quad v \in V_h
}
\end{align}
\textcolor{black}{where the semi-norm induced by the stabilization satisfies, $\| v \|^2_{s_{h,m}} \lesssim \| \nabla^m v \|^2_{\Omega_h}$ for all $v$ in the finite element space (see Lemma \ref{lem:genconsistency} below).}
This type of bound is known to hold for the stabilizations of Examples 1, 2, 3 and 5. In this paper we will prove the bound for the penalty operator on the form \eqref{eq:sh-functional}.

\subsection{Locking}

The stabilization provides additional control of the solution in the interface zone and enhances the approximation's accuracy. However, increasing the stabilization parameter $\tau$ 
may lead to growing constants $C(\tau)$ in error estimates while maintaining optimal convergence order for 
each fixed value of the parameter $\tau$.  This type of phenomenon is referred to as locking; for fixed $h$ and large $\tau$ the finite element space may not be sufficiently rich to satisfy the constraint imposed by the penalty operator while at the same time providing a good approximation. Usually, the choice of $\tau$ is straightforward, and locking is not a major problem in CutFEM; however, when systems of equations and nonlinearities are considered, a robust variant may be a safer choice. How pronounced 
the locking problem depends on the specific nature of the stabilization. For instance,  the face based stabilization 
forms (\ref{eq:face-penalty}) 
and (\ref{eq:face-penalty-elem}) leads to locking since in the limit when $\tau \rightarrow \infty$ all elements 
at the boundary will be coupled, and the only free function is a global polynomial.  The coupling becomes less pronounced when the stabilization is  
based on pairs $(T,S_h(T))$ of elements. Indeed we will prove that the stabilizations of the form \eqref{eq:sh-general} can be made locking free under suitable assumptions on the mappings $S_h$, both for the element-based stabilization and the nodal 
stabilization (\ref{eq:sh-functional}). The key to these results is to show that any function $u \in H^1(\Omega)$ admits an interpolant $\pi_h u \in V_h$ with optimal approximation properties such that $s_h(\pi_h u, v_h) = 0$ for all $v_h \in V_h$. This form of discrete strong consistency is satisfied when the kernel of $s_h$ is large enough. We note that the nodal stabilization is locking-free under very mild assumptions. In contrast, the (sufficient) conditions on the element-based stabilization appear to be more difficult to satisfy.

\section{Analysis}
In this section we establish the basic properties of the stabilization $s_h$ defined by (\ref{eq:sh-general}) and 
(\ref{eq:sh-functional}).  We start with the straightforward analysis of the general stabilization form (\ref{eq:sh-general}) 
and present a stability result and a consistency result.  Then we turn to nodal stabilization, which demands slightly more 
complex arguments. Here we first show a stability result in the natural norms. Then we show that the stabilization 
parameter $\tau$ may tend to infinity without introducing locking or loss of the order of convergence.  

\subsection{Properties of the General Stabilization}
We start by specifying some properties of the stabilization used for the analysis. 
 
\paragraph{Assumptions.} The following holds uniformly for all  triples $(b,T,S_h(T)) \in \mcS$: 
\begin{description}
\item[A1.]   There is a constant such that 
\begin{equation}\label{eq:A1}
h^{\alpha_m} \| p \|_{b}^2 \lesssim \| \nabla^m p \|_T^2, \qquad p \in \mathbb{P}_k(\IR^d)
\end{equation}

\item[A2.] There is a path $\mcP_h(T,S_h(T)) = \{T_j\in \mcT_h : j=1,\dots, n \}$ of face neighboring elements 
starting in $T_1 = T$ and ending in $T_n = S_h(T)$ with bounded length $n \lesssim 1$. 

\end{description}
\begin{rem}
It is straightforward to verify that the examples given in Section \ref{sec:examples} satisfy the assumptions A1 and A2.
\end{rem}

\begin{lem}[{\bf Stability}]\label{lem:genstab} If A1 and A2 hold then there is a constant such that 
\begin{equation}
\boxed{\sum_{(b,T,S_h(T)) \in \mcS}  h^{\alpha_m} \| v \|^2_{b} \lesssim \| \nabla^m v \|^2_\Omega + \| v \|^2_{s_{h,m}} }
\end{equation}
\end{lem}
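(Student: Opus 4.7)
The plan is to localize the bound to each triple $(b,T,S_h(T)) \in \mcS$. For a fixed triple the strategy is: (i) use the triangle inequality on the semi-norm $\|\cdot\|_b$ to split $v^e_T = v^e_{S_h(T)} + [v]_{T,S_h(T)}$ so that the jump contribution is absorbed by the stabilization semi-norm; (ii) use A1 to trade the $b$-norm of the polynomial $v^e_{S_h(T)}$ for an $H^m$-seminorm on $T$; (iii) use A2 to migrate that seminorm from $T$ over to $S_h(T)$ itself, where the large-element property converts it into a piece of $\|\nabla^m v\|_\Omega$.

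Concretely, the triangle inequality gives
\begin{equation}
h^{\alpha_m}\|v\|_b^2 = h^{\alpha_m}\|v^e_T\|_b^2 \lesssim h^{\alpha_m}\|v^e_{S_h(T)}\|_b^2 + h^{\alpha_m}\|[v]_{T,S_h(T)}\|_b^2,
\end{equation}
and by construction the second term equals $\tau^{-1} s_{m,b,T,S_h(T)}(v,v)$, so upon summation over $\mcS$ it is controlled by $\|v\|_{s_{h,m}}^2$. For the first term, A1 applied to $p = v^e_{S_h(T)} \in \mathbb{P}_k(\IR^d)$ immediately yields $h^{\alpha_m}\|v^e_{S_h(T)}\|_b^2 \lesssim \|\nabla^m v^e_{S_h(T)}\|_T^2$.

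Next I would use A2 to transport this control from $T$ back to $S_h(T)$. Along the path of face-neighbouring elements $T = T_1, \ldots, T_n = S_h(T)$, the standard finite-dimensional equivalence of norms on $\mathbb{P}_k(\IR^d)$, combined with shape regularity, gives $\|\nabla^m p\|_{T_j}^2 \lesssim \|\nabla^m p\|_{T_{j+1}}^2$ for any polynomial $p$ and each consecutive pair. Iterating a uniformly bounded number $n \lesssim 1$ of times and specialising to $p = v^e_{S_h(T)}$ produces
\begin{equation}
\|\nabla^m v^e_{S_h(T)}\|_T^2 \lesssim \|\nabla^m v^e_{S_h(T)}\|_{S_h(T)}^2 = \|\nabla^m v\|_{S_h(T)}^2.
\end{equation}
Because $S_h(T) \in \mcT_h^L$ satisfies $|S_h(T) \cap \Omega| \geq \gamma |S_h(T)|$, a further polynomial scaling argument on the finite-dimensional space $\mathbb{P}_k(S_h(T))$ upgrades this to $\|\nabla^m v\|_{S_h(T)}^2 \lesssim \|\nabla^m v\|_{S_h(T) \cap \Omega}^2$.

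The remaining step is to sum these local estimates and conclude the bound by $\|\nabla^m v\|_\Omega^2 + \|v\|_{s_{h,m}}^2$, and this is the step I expect to be the main technical obstacle, since A1--A2 do not explicitly bound the multiplicity of the collection $\mcS$. I would argue as follows: A2 forces $S_h(T)$ to lie within a patch of $O(h)$-diameter around $T$, so by quasiuniformity of $\mcT_{h,0}$ each large element $T'$ serves as $S_h(T)$ for only a uniformly bounded number of small elements $T$, and for each such pair $(T,S_h(T))$ only a uniformly bounded number of bilinear forms $b$ can be admissibly attached (implicit in the setup of $\mcS$). These two finite-overlap facts yield a uniform covering constant, so that $\sum_{(b,T,S_h(T)) \in \mcS} \|\nabla^m v\|_{S_h(T) \cap \Omega}^2 \lesssim \|\nabla^m v\|_\Omega^2$, which combined with the earlier bounds on the jump pieces closes the estimate.
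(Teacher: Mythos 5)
Your proof follows essentially the same path as the paper: split $v$ into the extension $(v|_{S_h(T)})^e$ plus the jump, absorb the jump via the definition of $s_{m,b,T,S_h(T)}$, use A1 on the extension, move to $S_h(T)$ by polynomial norm equivalence on the chain from A2, and close the sum with a finite-overlap count. Your argument is a bit more explicit than the paper's (in particular you spell out the Remez-type step passing from $\|\nabla^m v\|_{S_h(T)}$ to $\|\nabla^m v\|_{S_h(T)\cap\Omega}$, which the paper leaves implicit), but the strategy is identical.
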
 
\begin{proof} Adding and subtracting $(v|_{S_h(T)})^e$,  using the definition \eqref{eq:sbtsh_def} of 
$s_{b,T,S_h(T)}$, and finally assumption A1, we directly get
\begin{align}
h^{\alpha_m} \| v \|^2_{b} & \lesssim h^{\alpha_m} \| (v|_{S_h(T)})^e \|^2_{b} 
+  h^{\alpha_m} \| [v]_{T, S_h(T)} \|^2_{b}
\\
& \lesssim \| \nabla^m (v|_{S_h(T)})^e \|^2_T 
+  \| v \|^2_{s_{b,T,S_h(T)}}
\\
& \lesssim \| \nabla^m v \|^2_{S_h(T)} +  \| v \|^2_{s_{b,T,S_h(T)}}
\end{align}
Summing over all stabilizing forms, and using the fact that there is a uniformly bounded number of stabilization forms associated with each element $T \in \mcT_h^S$, we obtain the desired estimate.
\end{proof}

In preparation for the next lemma, we establish the following Poincar\'e type 
estimate for the jump operator.

\begin{lem}\label{lem:chainpoincare}  Let $S_h$ satisfy assumption A2 and $T_0 \in \mcT_h$ an 
element such that $T_0 \cup T \cup S_h(T) \subset B_\delta$. Then there is a constant 
such that
\begin{equation}
\| [ v ]_{T,S_h(T)} \|_{T_0} \lesssim h \| \nabla v \|_{\mcP_h(T,S_h(T))}
\end{equation}
\end{lem}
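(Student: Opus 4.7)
The plan is to reduce the jump across the pair $(T, S_h(T))$ to a telescoping sum of jumps across face neighbors in the path $\mathcal{P}_h(T, S_h(T)) = \{T_1=T, T_2, \dots, T_n = S_h(T)\}$ provided by assumption A2, and then bound each jump across a face by using that the jump polynomial vanishes on the shared face.

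First, by writing
\begin{equation*}
[v]_{T,S_h(T)} = (v|_T)^e - (v|_{S_h(T)})^e = \sum_{j=1}^{n-1} \bigl( (v|_{T_j})^e - (v|_{T_{j+1}})^e \bigr) = \sum_{j=1}^{n-1} [v]_{T_j,T_{j+1}},
\end{equation*}
the triangle inequality reduces the estimate to a bound on each term $\| [v]_{T_j,T_{j+1}} \|_{T_0}$. Since $n \lesssim 1$ by A2, it suffices to show that
\begin{equation*}
\| [v]_{T_j,T_{j+1}} \|_{T_0} \lesssim h \| \nabla v \|_{T_j \cup T_{j+1}}
\end{equation*}
for each $j$, where $T_j$ and $T_{j+1}$ share a common face $F_j$.

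The key observation is that because $v \in V_h \subset C^0(\Omega_h)$, the two polynomial pieces agree on $F_j$, so the jump polynomial $p_j := [v]_{T_j,T_{j+1}} \in \mathbb{P}_k(\mathbb{R}^d)$ vanishes identically on the affine hyperplane $H_j \supset F_j$. Writing $p_j(x) = ((x - x_0) \cdot n_{F_j}) \, q_j(x)$ for some $q_j \in \mathbb{P}_{k-1}(\mathbb{R}^d)$ and any $x_0 \in H_j$, and noting that $T_0, T_j, T_{j+1}$ all lie in the ball $B_\delta$ with $\delta \lesssim h$ (which follows from $T \cup S_h(T) \subset B_\delta$ together with the quasiuniformity and the uniformly bounded path length), one has $|(x - x_0) \cdot n_{F_j}| \lesssim h$ for all $x \in T_0$, yielding the Poincaré-type bound
\begin{equation*}
\| p_j \|_{T_0} \lesssim h \| \nabla p_j \|_{T_0}.
\end{equation*}
Alternatively, this estimate can be obtained by a standard scaling argument on the finite-dimensional polynomial space and the fact that the zero set $H_j$ meets $B_\delta$.

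It then remains to transfer $\| \nabla p_j \|_{T_0}$ to a norm over the path. Since $p_j$ is a polynomial of bounded degree and $T_0$, $T_j$, $T_{j+1}$ are all contained in $B_\delta$ with comparable diameters, the equivalence of norms on $\mathbb{P}_k(\mathbb{R}^d)$ restricted to such sets gives $\| \nabla p_j \|_{T_0} \lesssim \| \nabla p_j \|_{T_j}$, and then by the triangle inequality and similar norm equivalences applied to the two individual polynomial pieces,
\begin{equation*}
\| \nabla p_j \|_{T_j} \leq \| \nabla (v|_{T_j})^e \|_{T_j} + \| \nabla (v|_{T_{j+1}})^e \|_{T_j} \lesssim \| \nabla v \|_{T_j} + \| \nabla v \|_{T_{j+1}}.
\end{equation*}
Summing in $j$ and using $n \lesssim 1$ yields the claim. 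The main technical point is justifying the Poincaré estimate with the correct $h$-scaling; this relies crucially on the $C^0$ continuity of $v_h$ (so that the jump polynomial actually vanishes on $F_j$) together with the uniform boundedness of the path length from A2, which keeps all elements in a ball of radius proportional to $h$ and makes the scaling arguments for polynomials dimension-independent of the number of intermediate elements.
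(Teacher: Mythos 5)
Your proof is correct, but it takes a genuinely different route from the paper's. The paper's proof is a one-step argument: subtract a single constant $w$, transfer $\|v_1^e - w\|_{T_0}$ and $\|w - v_2^e\|_{T_0}$ to $T_1$ and $T_2$ by polynomial norm equivalence on nearby domains, and then invoke a chain Poincar\'e inequality for $v - w \in H^1(\mathcal{P}_h(T_1,T_2))$ (which implicitly uses the $C^0$ continuity only through the fact that $v$ is globally $H^1$ along the path), choosing $w$ as the constant $L^2$ projection on $T_1$. You instead telescope $[v]_{T,S_h(T)}$ as a sum of face-neighbor jumps $[v]_{T_j,T_{j+1}}$ and exploit a different consequence of $C^0$ continuity, namely that each such jump polynomial vanishes identically on the hyperplane containing the shared face; the factor $h$ then arises from a Poincar\'e inequality for polynomials with a zero set intersecting a ball of radius $\sim h$. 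Your version makes the mechanism behind the $h$-scaling more explicit and is closer in spirit to the classical derivations of face-based ghost penalty bounds, whereas the paper's is more compact and avoids having to say anything about the zero set of the jump. Two small points of hygiene: the intermediate elements $T_j$ need not lie in the \emph{given} ball $B_\delta$ (the hypothesis covers only $T_0\cup T\cup S_h(T)$), but since $n\lesssim 1$ they do lie in a ball of radius $C\delta\lesssim h$, which is all the scaling argument needs; and the factorization $p_j = ((x-x_0)\cdot n_{F_j})\,q_j$ by itself gives $\|p_j\|\lesssim h\|q_j\|$ rather than $\lesssim h\|\nabla p_j\|$ — it is the equivalent scaling/finite-dimensionality argument you mention in the alternative that closes this cleanly.
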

\begin{proof} Using the notation $T_1 = T$, $T_2 = S_h(T)$, and $v|_{T_i} = v_i$ for $i=1,2$, we get by 
adding and subtracting a constant function $w \in P_0(\IR^d)$, 
\begin{align}
\| [ v ]_{T_1,T_2} \|_{T_0}
&\lesssim  \| v_1^e - w  \|_{T_0} +   \|  w  - v_2^e \|_{T_0}
\\
&\lesssim  \| v_1^e - w  \|_{T_1} +   \|  w  - v_2^e \|_{T_2}
\\
&=  \| v - w  \|_{T_1} +   \|  w  - v  \|_{T_2}
\\
&\lesssim  \| v - w  \|_{\mcP_h(T_1, T_2)} 
\\ \label{eq:technical-poincare}
&\lesssim h \| \nabla v \|_{\mcP_h(T_1, T_2)} 
\end{align}
Here we used the stability 
\begin{equation}
\| p \|_{T'} \lesssim \| p \|_T, \qquad p \in \mathbb{P}_k(\IR^d)
\end{equation}
which holds when $T'\cup T \subset B_\delta$ for some $\delta \lesssim h$.  To verify the final estimate (\ref{eq:technical-poincare}) there is by assumption A2 a path $\mcP_h(T_1, T_2) = \{ T^j\}_{j=1}^n$ of face neighboring elements  such that $T^1 = T_1$ and $T^n=T_2$. Then recursively applying, the Poincar\'e inequality 
\begin{align}
\| v \|^2_{T'} \lesssim  \| v \|^2_{T''} + h^2 \| \nabla v \|^2_{T' \cup T''} , \qquad v \in H^1(T' \cup T'')
\end{align}
for elements $T',T''$ sharing a face, we get 
\begin{align}
\| v - w \|^2_{\mcP_h(T_1, T_2) } \lesssim \| v - w \|^2_{T_1}  + h^2 \| \nabla v \|^2_{\mcP_h(T_1,T_2)}
\lesssim h^2 \| \nabla v \|^2_{\mcP_h(T_1,T_2)}
\end{align}
where in the final inequality we choose $w$ to be the $L^2$ projection on constants on the element $T_1$.
\end{proof}

We will see in the analysis presented below that it is natural to require that 
a weak consistency estimate, see (\ref{eq:stab-cons}),  holds. This bound 
ensures that the stabilization is not too strong and that the convergence rate 
in a finite element method is not negatively affected by the stabilization. 

To discuss the consistency of the method we start by defining an interpolation operator $\pi_h : L^2(\Omega) \rightarrow V_h$. To that end we introduce the local $L^2$-projection $P_T:L^2(T) \rightarrow \mathbb{P}_k(T)$ and recall 
that there is an universal extension operator 
$E: H^s(\Omega) \ni v \mapsto v^E \in H^s(\IR^d)$,  see \cite{stein70},  satisfying the stability 
\begin{align}\label{eq:ext-stability}
\| v^E \|_{H^s(\IR^d)} \lesssim \| v \|_{H^s(\Omega)}, \qquad s\geq 0
\end{align}
Next we define a Cl\'ement interpolation operator of form 
\begin{align}
\pi_{h,Cl} : L^2(\Omega_h) \ni v \mapsto \sum_{i \in I} \varphi_i^*(P_{T_i} v|_{T_i} ) \varphi_i \in V_h
\end{align}
where $T_i \in \mcT_h$ is an element in the support of $\varphi_i$. Then we define the interpolation operator by 
composing the Cl\'ement operator and the extension operator,
\begin{equation}\label{eq:interpol}
\pi_h: L^2(\Omega)\ni v\mapsto \pi_{h,Cl} E v \in V_h
\end{equation} 
By standard arguments we have the stability 
\begin{align}\label{eq:pihstab}
\|\pi h \|_T \lesssim \| v \|_{\mcT_h(T)}
\end{align}
where $\mcT_h(T)$ is the set of all elements that share a node with $T$ and the 
the error estimate
\begin{equation}\label{eq:stand_est}
\| v - \pi_h v \|_{H^m(\Omega_h)} \lesssim h^{k+1 -m } \| v \|_{H^{k+1}(\Omega)}, \qquad m=0,1
\end{equation}
\begin{lem}[{\bf Weak consistency general interpolants}]\label{lem:genconsistency} If A1 and A2 hold then there are constants such that 
\begin{equation}\label{eq:sh-scaling}
\boxed{\| v \|_{s_{h,m}} \lesssim {\tau^{1/2}} \| \nabla^m v \|_{\Omega_h}, \qquad v \in V_h}
\end{equation}
and
\begin{align}\label{eq:stab-cons}
\boxed{ \|  \pi_h v \|_{s_{h,m}} \lesssim {\tau^{1/2}} h^{k+1 - m} \| v \|_{H^{k+1}(\Omega)},  \qquad v \in H^{k+1}(\Omega) }
\end{align}
\end{lem}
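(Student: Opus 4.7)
The plan is to prove the two estimates in sequence: the first by direct expansion, and the second by exploiting that each local stabilization vanishes on polynomials of degree $\le k$.

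For the scaling bound \eqref{eq:sh-scaling}, I would expand $\|v\|_{s_{h,m}}^2$ as a sum of local contributions $\tau h^{\alpha_m} b([v]_{T,S_h(T)},[v]_{T,S_h(T)})$. A triangle inequality on the jump $[v]_{T,S_h(T)} = v_1^e - v_2^e$ combined with assumption A1 bounds each local term by $\tau(\|\nabla^m v_1^e\|_T^2 + \|\nabla^m v_2^e\|_T^2)$. Since $v_1^e|_T = v|_T$, the first contribution equals $\tau\|\nabla^m v\|_T^2$. For the second, $v_2^e\in\mathbb{P}_k(\mathbb{R}^d)$ agrees with $v$ on $S_h(T)$; by A2 and quasi-uniformity of the mesh, both $T$ and $S_h(T)$ lie in a ball of diameter $\lesssim h$, so polynomial stability gives $\|\nabla^m v_2^e\|_T \lesssim \|\nabla^m v\|_{S_h(T)}$. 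Summing over the triples in $\mcS$, and using that each element appears in a uniformly bounded number of stabilization forms, yields the first estimate.

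For the consistency estimate \eqref{eq:stab-cons}, the key observation is polynomial invariance: for any global polynomial $p\in\mathbb{P}_k(\mathbb{R}^d)$, both canonical extensions $(\pi_h v - p)_i^e$ pick up exactly the same term $p$, which cancels in the difference, so $[\pi_h v - p]_{T,S_h(T)} = [\pi_h v]_{T,S_h(T)}$. Therefore, for each triple $(b,T,S_h(T))\in\mcS$, I may subtract a local polynomial $p_T\in\mathbb{P}_k(\mathbb{R}^d)$ of my choosing before estimating. I pick $p_T$ to be the $L^2$-projection of the Stein extension $v^E = Ev$ onto $\mathbb{P}_k(\mathbb{R}^d)$ over a patch $\omega_h(T)\supset T \cup S_h(T)\cup \mcP_h(T,S_h(T))$, which by A2 has $\diam(\omega_h(T))\lesssim h$. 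Reapplying the argument used to prove \eqref{eq:sh-scaling}, but triple-by-triple to $\pi_h v - p_T$, bounds each contribution by $\tau(\|\nabla^m(\pi_h v - p_T)\|_T^2 + \|\nabla^m(\pi_h v - p_T)\|_{S_h(T)}^2)$. Splitting $\pi_h v - p_T = (\pi_h v - v^E) + (v^E - p_T)$, the first summand is handled by the standard interpolation estimate \eqref{eq:stand_est} together with the Stein stability \eqref{eq:ext-stability}, and the second by Bramble--Hilbert on $\omega_h(T)$; both contribute $h^{k+1-m}$ times local $H^{k+1}$-seminorms of $v^E$. Summing with finite overlap yields $\tau h^{2(k+1-m)}\|v\|_{H^{k+1}(\Omega)}^2$, and taking square roots gives \eqref{eq:stab-cons}.

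The most delicate step is the coupled role of the single local polynomial $p_T$: it must simultaneously approximate $v^E$ accurately on $T$, on $S_h(T)$, and on the connecting path. Assumption A2, which forces $\diam(\omega_h(T))\lesssim h$, is precisely what lets Bramble--Hilbert deliver the correct $h^{k+1-m}$ scaling with a constant independent of both $h$ and $\tau$; without the bounded path length, the polynomial approximation step would lose powers of $h$ and the consistency constant would degrade.
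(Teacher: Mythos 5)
Your proposal is correct and arrives at both bounds, but it does so via a slightly different route than the paper. For the scaling estimate \eqref{eq:sh-scaling}, the paper keeps the jump $[v]_{T,S_h(T)}$ intact after applying A1, applies an inverse estimate, and then invokes the chain Poincar\'e estimate of Lemma~\ref{lem:chainpoincare} to convert $h^{-1}\|[v]_{T,S_h(T)}\|_T$ into $\|\nabla v\|_{\mcP_h(T,S_h(T))}$; you instead split the jump immediately into the two canonical extensions $v_1^e - v_2^e$, absorb $v_1^e|_T = v|_T$, and use polynomial stability on a ball of radius $O(h)$ (guaranteed by A2 and quasiuniformity) to bound $\|\nabla^m v_2^e\|_T \lesssim \|\nabla^m v\|_{S_h(T)}$. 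Your route is a bit more elementary since it bypasses Lemma~\ref{lem:chainpoincare} and the two inverse estimates, relying only on polynomial extension stability; the paper's route foregrounds the chain Poincar\'e argument, which it then reuses in the nodal-stability proof. For the consistency estimate \eqref{eq:stab-cons}, the paper subtracts $\pi_h w$ (for a global polynomial $w$) and then uses the $H^m$-stability of the Cl\'ement operator on a patch before applying Bramble--Hilbert, whereas you subtract a global polynomial $p_T$ directly (which is in the kernel of the jump for any choice) and then split $\pi_h v - p_T = (\pi_h v - v^E) + (v^E - p_T)$, handling the two pieces by a local interpolation estimate and Bramble--Hilbert respectively. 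Both use the same ingredients (A1, A2, Bramble--Hilbert, finite overlap of the $O(h)$-patches), and both locate the role of A2 in the same place, so the differences are mostly in bookkeeping rather than substance. One small caveat: when you appeal to the ``standard interpolation estimate \eqref{eq:stand_est}'' for the piece $\pi_h v - v^E$, you are implicitly using a \emph{localized} version on the patch $\omega_h(T)$ rather than the global bound as stated; this is standard for Cl\'ement-type operators and is what makes the subsequent finite-overlap summation valid, but it is worth stating explicitly.
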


\begin{proof} 
First,  we prove (\ref{eq:sh-scaling}).  We have for $v \in V_h$,
\begin{align}
\| v \|^2_{s_h} &= \sum_{(b,T,S_h(T)) \in \mcS} \tau h^{\alpha_m} \|[v]_{T,S_h(T)}\|^2_{b} 
\\
&\lesssim  \sum_{(b,T,S_h(T)) \in \mcS} \tau  \|\nabla^m [v]_{T,S_h(T)}\|^2_{T}  
\\
&\lesssim  \sum_{(b,T,S_h(T)) \in \mcS} \tau h^{-2m} \| [v]_{T,S_h(T)}\|^2_{T}  
\\
&\lesssim  \sum_{(b,T,S_h(T)) \in \mcS} \tau  h^{2(1-m)} \| \nabla v \|^2_{\mcP_h(T,S_h(T))}  
\\
&\lesssim \tau \| \nabla^m v \|^2_{\mcT_h}
\end{align}
where we used some inverse estimates and Lemma \ref{lem:chainpoincare}. 
For the weak consistency estimate let $v \in H^{k+1}(\Omega)$ and consider the form associated with the triple $(b,T,S_h(T))$.  
We have for each $w \in \mathbb{P}_k(\IR^d)$, 
\begin{align}
&\| \pi_h v \|_{s_{b,T,S_h(T)}} = \| \pi_h (v-w) \|_{s_{b,T,S_h(T)}} 
\lesssim h^{\alpha_m/2} \| [ \pi_h (v - w)]_{T,S_h(T)} \|_{b} 
\\
&\qquad 
\lesssim  \| \pi_h (v - w ) \|_{H^m(T \cup S_h(T))}
\lesssim  \| v - w \|_{H^m(\mcT_h(\mcP_h(T, S_h(T))))}
\lesssim h^{k+1 - m} \| v \|_{H^{k+1}(B_{\delta'})}
\end{align} 
where in the last inequality we choose $w$ according to the Bramble-Hilbert lemma, see \cite{BreSco},  and $B_{\delta'}$ is an 
open ball containing $\mcT_h(\mcP_h(T , S_h(T)))$, the set of all elements that share a node with an element in the chain $\mcP_h(T,S_h(T))$ that connects $T$ and $S_h(T)$, with $\delta' \sim h$.  Summing the contributions we obtain
\begin{align}
\|  \pi_h v \|^2_{s_{h,m}}  &= \sum_{(b,T,S_h(T)) \in \mcS} \| \pi_h v \|^2_{s_{b,T,S_h(T)}} 
\\
&\qquad \lesssim  
 \sum_{(b,T,S_h(T)) \in \mcS}  \tau  h^{2(k+1)} \| v \|^2_{H^{k+1-m}(B_{\delta'})} 
 \lesssim \tau h^{2(k+1-m)} \| v \|^2_{H^{k+1}(\Omega)}
\end{align}
where we used the fact that the set of balls $B_{\delta'}$ containing the chains $\mcP_h(T, S_h(T))$ have 
finite overlap, i.e, the number of balls an arbitrary point belongs to is bounded.
\end{proof}

To see that the method is locking free provided the coupling induced by the stabilization $s_h$ 
is sufficiently local we introduce the following partition of the elements,
\begin{equation}
{\widetilde{\mathcal{T}}}^L_h 
= \mcT_h^L \setminus S_h(\mcT_h^S)
= \{T \in \mathcal{T}_h^L: \not \exists T' \in \mathcal{T}_h^S \mbox{ such that } T = S_h(T')\}
\end{equation}
and 
\begin{equation}
\mathcal{T}^M_h = \{T^M :  T^M= S_h^{-1}(T) \cup T \mbox{ for some } T \in \mathcal{T}_h^L \setminus \widetilde{\mathcal{T}}^L_h\}
\end{equation}
Observe that $\mathcal{T}^M_h$ is the set of macro elements defined by the operator $S_h$. We further reduce this set by merging any two (or more) $T^M_1, T^M_2 \in \mathcal{T}^M_h$ for which ${T}^M_1 \cap {T}^M_2 \ne \emptyset$ into $\widetilde{T}^M = T^M_1 \cup T^M_2$. The resulting set of merged macro elements is denoted $\widetilde{\mathcal{T}}^M_h$. This means that we merge all macro elements that share a node into larger macroelements that are isolated from each other in the sense that there is a layer 
of elements that are not macro elements between them.  It follows by the definition that the union of all elements 
in $\widetilde{\mathcal{T}}^L_h$ and $\widetilde{\mathcal{T}}^M_h$ is $\Omega_h$. 

To each merged macro element $\widetilde{T}^M \in \widetilde{\mcT}^M_h$ we associate a ball $B_{\delta(\widetilde{T}^M),\widetilde{T}^M}$ of diameter $\delta(\widetilde{T}^M)$, such that $\widetilde{T}^M \subset B_{\delta(\widetilde{T}^M),\widetilde{T}^M}$. 

We next decompose the set of finite element basis functions into $\mathcal{B}_h^M$, the basis functions defining polynomials on the elements in the set $\widetilde{\mathcal{T}}^M_h$ and $\mathcal{B}_h^L = \mathcal{B}_h \setminus \mathcal{B}_h^M$, with associated index sets $I_M$ and $I_L$. Observe that since the elements in $\widetilde{\mathcal{T}}^M_h$ have disjoint boundaries each basis function is either in $\mathcal{B}_h^L$ and
 then its support has zero intersection with the elements in $\widetilde{\mathcal{T}}^M_h$, or attributed to a single element $\widetilde{T}^M$ in $\mathcal{B}_h^M$. 
 
We are now ready to define the modified interpolant
\begin{equation}\label{eq:interpolmerged}
\pi_h^M v := \sum_{i \in I_L} \varphi_{i}^*(\pi_h v) \varphi_i + \sum_{i \in I_M} \varphi_{i}^*(P_{B_{\delta,\widetilde{T}^M(i)}} v^E) \varphi_i 
\end{equation}
where $B_{\delta,\widetilde{T}^M(i)}$ denotes a ball associated to the unique $\widetilde{T}^M \in \widetilde{\mathcal{T}}^M$ of the basis function $\varphi_i$ such that $\widetilde{T}^M
\subset B_{\delta,\widetilde{T}^M(i)}$. Observe that it follows by the definition that $\pi_h^M v\vert_{\widetilde{T}^M} = P_{B_{\delta(\widetilde{T}^M),\widetilde{T}^M}} v$ for all $\widetilde{T}^M \in \widetilde{\mathcal{T}}^M_h$. As a consequence $[\pi_h^M v]_{T,S_h(T)} = 0$ for all $T \in \mathcal{T}^S_h$. We then have the following result.

\begin{lem}[{\bf Strong consistency}]\label{lem:strongconsistency}  The interpolant defined by (\ref{eq:interpolmerged}) satisfies
\begin{align}\label{eq:disc_strong_cons}
\boxed{ \|  \pi_h^M v \|_{s_{h,m}} = 0}
\end{align}
If there is a constant such that $\delta(\widetilde{T}^M) \lesssim h$ for all $ \widetilde{T}^M \in \widetilde{\mathcal{T}}^M_h$ and the set of balls 
$\{B_{\delta(\widetilde{T}^M),\widetilde{T}^M} : \widetilde{T}^M \in \widetilde{\mcT}_h^M \}$
have uniformly finite overlap. Then there is a constant such that 
\begin{equation}\label{eq:LF-approx}
\boxed{\| v - \pi_h^M v \|_{H^m(\Omega_h)} \lesssim h^{k+1 -m } \| v \|_{H^{k+1}(\Omega)}, \qquad m=0,1}
\end{equation}
\end{lem}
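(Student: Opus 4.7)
The plan is to prove the two claims in turn. For the first, vanishing of the stabilization seminorm, I would note from (\ref{eq:interpolmerged}) that on each merged macro element $\widetilde{T}^M \in \widetilde{\mcT}^M_h$, the restriction $\pi_h^M v|_{\widetilde{T}^M}$ equals the single polynomial $p := P_{B_{\delta(\widetilde{T}^M),\widetilde{T}^M}} v^E$. Indeed, every basis function in $\mathcal{B}_h^M$ is attributed to a unique $\widetilde{T}^M$ since the merged macro elements have disjoint boundaries, so on that element only these contribute, and the sum $\sum_{i} \varphi_i^*(p)\varphi_i$ reproduces $p$ by the partition-of-unity property of the nodal basis. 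Because each $T \in \mcT_h^S$ together with $S_h(T)$ lies inside a single merged macro element, the canonical polynomial extensions of $\pi_h^M v|_T$ and $\pi_h^M v|_{S_h(T)}$ both equal $p$, so $[\pi_h^M v]_{T,S_h(T)} \equiv 0$ and every summand of $s_{h,m}(\pi_h^M v, \pi_h^M v)$ vanishes.

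For the approximation estimate (\ref{eq:LF-approx}), the plan is a classical Bramble--Hilbert argument applied element by element, working through the extension $v^E$ and appealing to the Stein stability (\ref{eq:ext-stability}) at the end. I would first establish two local properties of the operator $v \mapsto \pi_h^M v$: \textbf{(a)} polynomial reproduction, $\pi_h^M q = q$ on $\Omega_h$ for every $q \in \mathbb{P}_k(\mathbb{R}^d)$, which follows since $\pi_h q = q$ (Cl\'ement reproduces polynomials locally) and $P_{B_{\delta,\widetilde{T}^M}} q = q$ (as $\deg q \leq k$), so both sums in (\ref{eq:interpolmerged}) collapse to $\sum_i \varphi_i^*(q)\varphi_i = q$; and \textbf{(b)} local $L^2$-stability, namely that for each $T \in \mcT_h$ there is a patch $\omega_T$ of diameter $\lesssim h$ with $\|\pi_h^M v\|_T \lesssim \|v^E\|_{\omega_T}$. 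The $I_L$-contribution is bounded by the Cl\'ement stability (\ref{eq:pihstab}); the $I_M$-contribution is bounded by $L^2$-stability of $P_{B_{\delta,\widetilde{T}^M}}$ on a ball of diameter $\delta \lesssim h$ together with the reference-element equivalence of finite-element coefficient norms.

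Combining (a), (b), and an inverse inequality yields, for any $q \in \mathbb{P}_k(\mathbb{R}^d)$,
\[
\|v - \pi_h^M v\|_{H^m(T)} \lesssim \|v^E - q\|_{H^m(\omega_T)} + h^{-m}\|v^E - q\|_{\omega_T}.
\]
Selecting $q$ as a Bramble--Hilbert polynomial approximation on the patch $\omega_T$ bounds the right-hand side by $h^{k+1-m}\|v^E\|_{H^{k+1}(\omega_T)}$. Squaring and summing over $T \in \mcT_h$, using finite overlap of $\{\omega_T\}$ (inherited from $\delta(\widetilde{T}^M)\lesssim h$ and the finite-overlap hypothesis on the balls $\{B_{\delta(\widetilde{T}^M),\widetilde{T}^M}\}$), together with (\ref{eq:ext-stability}), delivers (\ref{eq:LF-approx}). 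The main obstacle is establishing (b) uniformly on elements that meet a merged macro element, since the coefficient functional $\varphi_i^*(P_{B_{\delta,\widetilde{T}^M(i)}} v^E)$ is a nontrivial $L^2$-projection based quantity rather than a pointwise value; the hypothesis $\delta(\widetilde{T}^M)\lesssim h$ is indispensable for controlling this functional by an $L^2$-norm on a patch of size $\lesssim h$, and the finite-overlap hypothesis then prevents these local bounds from accumulating in the global sum.
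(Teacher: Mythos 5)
Both parts of your proposal are correct. The argument for the first claim, $\|\pi_h^M v\|_{s_{h,m}}=0$, coincides with the paper's: on each $\widetilde{T}^M$ the interpolant is the single polynomial $P_{B_{\delta(\widetilde{T}^M),\widetilde{T}^M}}v^E$, every $T\in\mcT_h^S$ lies with $S_h(T)$ in one such $\widetilde{T}^M$, and therefore the polynomial jump vanishes. (Note that what you invoke is nodal reproduction of $\mathbb{P}_k$ by the Lagrange basis, not partition of unity; the latter only gives reproduction of constants.)

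For the approximation estimate you take a genuinely different, though equally standard, route. The paper splits $v-\pi_h^Mv=(v-\pi_hv)+(\pi_hv-\pi_h^Mv)$, exploits that the second difference is supported by the basis functions indexed by $I_M$ so its $L^2$ norm is controlled on $\widetilde\Omega_M$, applies an inverse inequality there, and then bounds $\|v-\pi_h^Mv\|_{\widetilde\Omega_M}$ directly as an $L^2$-projection error on balls of radius $\lesssim h$. This reuses the known Cl\'ement estimate \eqref{eq:stand_est} and needs no fresh stability analysis of $\pi_h^M$. Your argument instead pursues the classical Bramble--Hilbert template: prove polynomial reproduction and a local $L^2$ stability bound $\|\pi_h^Mv\|_T\lesssim\|v^E\|_{\omega_T}$, then write $v-\pi_h^Mv=(v-q)-\pi_h^M(v-q)$ and conclude via an inverse estimate and Bramble--Hilbert. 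This requires establishing stability (b) for the mixed functional (Cl\'ement coefficients on $I_L$, ball-projection coefficients on $I_M$), which is the extra work your plan correctly identifies as the main obstacle; conversely, it yields a self-contained, operator-theoretic proof. One small caution: in stating reproduction as $\pi_h^Mq=q$ you should work consistently through the extension $v^E$ (as you do in the final displayed estimate and as the paper implicitly does on $\Omega_h\setminus\Omega$), since the Stein extension $E$ does not reproduce polynomials; the clean statement is that the operator acting on $L^2(\Omega_h)$ underlying $\pi_h^M$ reproduces $\mathbb{P}_k$. Both your hypotheses $\delta(\widetilde{T}^M)\lesssim h$ and finite overlap are used in the same way as in the paper.
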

\begin{proof}
The first claim is true by construction, since for all $\widetilde{T}^M \in \widetilde{\mathcal{T}}_h^M$ $\pi_h^M v\vert_{\widetilde{\mathcal{T}}_h^M} = P_{B_{\delta(\widetilde{T}^M),\widetilde{T}^M}} v$ and therefore $[\pi_H^M v]_{T,S_h(T)} = 0$. To prove the error estimate \eqref{eq:LF-approx} we note that
\begin{equation}
\| v - \pi_h^M v \|_{H^m(\Omega_h)} \leq \| v - \pi_h v \|_{H^m(\Omega_h)}+\| \pi_h v - \pi_h^M v \|_{H^m(\Omega_h)}
\end{equation}
The first term of the right hand side is bounded by the standard estimate \eqref{eq:stand_est}.
For the second part we observe that
\begin{equation}
\pi_h v - \pi_h^M v = \sum_{i \in I_M} \varphi_{i}^*(B_{\delta(\widetilde{T}^M),\widetilde{T}^M(i)} v - \pi_h v) \varphi_i. 
\end{equation}
It then follows using an inverse inequality that
\begin{equation}
\| \pi_h v - \pi_h^M v \|_{H^m(\Omega_h)} \lesssim (h^{-m} + 1) \| \pi_h v - \pi_h^M v \|_{\Omega_h} \lesssim  (h^{-m} + 1) \| \pi_h v - \pi_h^M v \|_{\tilde \Omega_M}
\end{equation}
where $\tilde \Omega_M = \cup_{\widetilde{T}^M \in \widetilde{\mathcal{T}}^M_h} \widetilde{T}^M$. Applying once again the triangle inequality we see that
\begin{equation}
\| \pi_h v - \pi_h^M v \|_{\tilde \Omega_M} \leq \| \pi_h v - v \|_{\Omega_h}+ \|v - \pi_h^M v \|_{\tilde \Omega_M}.
\end{equation}
It only remains to bound the second term of the right hand side. It follows by the definition that
\begin{align}
\|v - \pi_h^M v \|_{\tilde \Omega_M}^2 &= \sum_{\tilde T_M \in \widetilde{\mathcal{T}}_M} \|v - \pi_h^M v \|_{\tilde T_M}^2 \leq \sum_{\tilde T_M \in \widetilde{\mathcal{T}}_M} \|v - \pi_h^M v \|_{B_{\delta(\widetilde{T}^M),\widetilde{T}^M}}^2 
\\
&\qquad 
\lesssim h^{2(k+1)} \sum_{\tilde T_M \in \widetilde{\mathcal{T}}_M} |v^E|^2_{H^{k+1}(B_{\delta(\widetilde{T}^M),\widetilde{T}^M})} \lesssim h^{2(k+1)} |v|^2_{H^{k+1}(\Omega)}  
\end{align}
where we used the finite overlap of the $B_{\delta(\widetilde{T}^M),\widetilde{T}^M}$ and the stability of the extension in the last inequality. We conclude by collecting the above bounds.
\end{proof}
We note that the assumptions on $\widetilde{\mathcal{T}}^M_h$ are quite strong. It is not obvious how to design an agglomeration map $S_h$ that ensures the uniform bound $\diam(\tilde  T^M) = O(h)$ for all $\tilde  T^M \in \widetilde{\mathcal{T}}^M_h$ as well as finite overlap of the associated set $\{B_{\delta(\widetilde{T}^M),\widetilde{T}^M} : \widetilde{T}^M \in \widetilde{\mcT}_h^M \}$ of balls.  We therefore now focus on what we call nodal-based stabilization, which does not need such strong assumptions to be locking free.

\subsection{Properties of Nodal Stabilization}

Here we specialize the analysis to the nodal stabilization defined by  (\ref{eq:sh-functional}) and 
we show that the critical stability bounds (\ref{eq:stab-est}) indeed hold and that we can construct an interpolation 
operator that satisfies strong consistency; more precisely, the interpolant of an $L^2$ function 
is in the kernel of the stabilization. 

Starting with the stability estimate we note that applying Lemma \ref{lem:genstab} we get 
\begin{align}
 \sum_{i\in I^S}   h^{d-2m} (\varphi^*_{T_i} (v))^2 \lesssim h^{-2m} \| v \|^2_\Omega + \| v \|^2_{s_h}
\end{align}
Thus the simple general analysis does not work directly in the case $m=1$, since 
we have the term  $h^{-2m} \| v \|^2_\Omega$ instead of $\| \nabla v \|_\Omega^2$. To handle that case we will need the following technical, but natural, assumption that extends A2.
\begin{description}
\item[A3.]
For elements $T_1, T_2$ in $\mcT_h^S$ contained in $\supp(\varphi_i)$ there is a 
path $\mcP_h(S_h(T_1), S_h(T_2))$ of uniformly bounded length consisting only of elements in 
$\mcT_h^L$. For $T_1\in \mcT_h^L$ and $T_2 \in \mcT_h^S$ contained in $\supp(\varphi_i)$ there is a 
path $\mcP_h(T_1, S_h(T_2))$ of uniformly bounded length consisting only of elements in 
$\mcT_h^L$.
\end{description}
\begin{lem}[{\bf Stability}]\label{lem:nodalstability}  Let $s_h$ be the nodal stabilization form defined by  (\ref{eq:sh-functional}).  Then 
there is a constant such that 
\begin{align}\label{eq:nodalstability}
\boxed{
 \| \nabla^m v \|^2_{\Omega_h} \lesssim \| \nabla^m v \|^2_\Omega + \| v \|^2_{s_{h,m}}, \qquad m=0,1, \quad v \in V_h
}
\end{align}

\eqref{eq:stab-est} holds.

\end{lem}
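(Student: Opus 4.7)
The plan is to bound $\|\nabla^m v\|_{\Omega_h}^2$ element by element using the partition $\mcT_h = \mcT_h^L \cup \mcT_h^S$. On any $T \in \mcT_h^L$ the condition $|T\cap \Omega|\ge \gamma|T|$ together with polynomial norm equivalence gives $\|\nabla^m v\|_T^2 \lesssim \|\nabla^m v\|_{T\cap\Omega}^2$, so these contributions are immediately absorbed into $\|\nabla^m v\|_\Omega^2$. Thus it suffices to control the contribution from each small element $T \in \mcT_h^S$.

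On such a $T$, I will use the splitting $v|_T = (v|_{S_h(T)})^e + [v]_{T,S_h(T)}$. The first term is handled by polynomial norm equivalence: assumption A2 guarantees $T\cup S_h(T)\subset B_\delta$ with $\delta\lesssim h$, which yields $\|\nabla^m (v|_{S_h(T)})^e\|_T^2 \lesssim \|\nabla^m v\|_{S_h(T)}^2 \lesssim \|\nabla^m v\|_{S_h(T)\cap\Omega}^2$ since $S_h(T)\in\mcT_h^L$. It remains to bound $\|\nabla^m q\|_T^2$ where $q := [v]_{T,S_h(T)}\in\mathbb{P}_k(\mathbb{R}^d)$. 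Expanding in the nodal basis and invoking finite-dimensional norm equivalence on $\mathbb{P}_k(T)$ gives $\|q\|_T^2\sim h^d\sum_{i\in I_T} q(x_i)^2$ for $m=0$, and, modding out the constant-mode kernel, $\|\nabla q\|_T^2\sim h^{d-2}\sum_{i\in I_T}(q(x_i)-q(x_{i_0}))^2$ for $m=1$, for any fixed reference node $i_0\in I_T$.

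Each nodal value is expanded as $q(x_i) = \hat{v}_i - (v|_{S_h(T)})^e(x_i)$ and the index set is split into $I_T\cap I^L$ and $I_T\cap I^S$. For $i\in I^L$, some large element $T'_i\in\mcT_h^L$ lies in $\supp(\varphi_i)$, so $\hat{v}_i = \varphi^*_{i,T'_i}(v)$ and $(v|_{S_h(T)})^e(x_i)$ are both values of polynomials on nearby large elements, and inverse estimates together with polynomial norm equivalence transfer their control to $\|\nabla^m v\|_\Omega^2$. For $i\in I^S$, the nodal stabilization supplies $h^{\alpha_m}(\hat{v}_i - (v|_{S_h(T_i)})^e(x_i))^2 \lesssim\|v\|_{s_{h,m}}^2$; the remaining mismatch between $(v|_{S_h(T_i)})^e(x_i)$ and $(v|_{S_h(T)})^e(x_i)$ is controlled by propagating polynomial norm equivalence along the bounded-length chain $\mcP_h(S_h(T),S_h(T_i))\subset\mcT_h^L$ guaranteed by assumption A3. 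A standard finite-overlap argument on the associated neighborhoods then delivers the global estimate.

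The main obstacle will be the $m=1$ case, where $\|\nabla q\|_T^2$ only controls pairwise differences of dof values rather than absolute magnitudes. This forces tracking differences $\hat{v}_i - \hat{v}_{i_0}$ across pairs of nodes that may both lie in $I^S$, in which case one must chain through large elements connecting $S_h(T_i)$ to $S_h(T_{i_0})$. This is precisely the reason A3 is stated in its two-sided form: without a uniformly bounded path in $\mcT_h^L$ between the two $S_h$-images, the constant-mode ambiguity cannot be resolved using only the pointwise anchors supplied by the nodal stabilization. Once this chaining step is in place, combining the $i\in I^L$ and $i\in I^S$ bounds elementwise and summing with bounded overlap completes the proof.
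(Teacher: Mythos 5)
Your plan is essentially the same as the paper's: partition into $\mcT_h^L$ and $\mcT_h^S$, split $v$ on a small element as $(v|_{S_h(T)})^e + [v]_{T,S_h(T)}$, reduce the jump to degree-of-freedom values via an elementwise equivalence, treat $i\in I^L$ and $i\in I^S$ separately, and invoke A3 to chain through large elements. One point is worth flagging, though: the ``main obstacle'' you identify for $m=1$ does not actually arise. You only need an \emph{upper} bound on $\|\nabla q\|_T^2$, not a two-sided equivalence, so there is no need to mod out the constant-mode kernel and track pairwise differences $q(x_i)-q(x_{i_0})$. The paper simply uses the inverse inequality $\|\nabla q\|_T^2 \lesssim h^{-2}\|q\|_T^2$ and then the $L^2$ nodal equivalence $\|q\|_T^2 \sim h^d\sum_i |q(x_i)|^2$, which yields $\|\nabla q\|_T^2 \lesssim h^{d-2}\sum_i |q(x_i)|^2$ directly in terms of absolute nodal values. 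That makes the bookkeeping uniform across $m=0,1$ and avoids the extra chaining between $S_h(T_i)$ and $S_h(T_{i_0})$ that your quotient-norm variant would force. Relatedly, the two clauses of A3 are not there to resolve a constant-mode ambiguity at $m=1$; they correspond exactly to the two cases in the dof split, namely large-element paths $\mcP_h(S_h(T_1),S_h(T_2))$ for $i\in I^S$ and $\mcP_h(T_1,S_h(T_2))$ for $i\in I^L$ (where you implicitly rely on A3 but don't name it). Neither issue invalidates your argument---the triangle inequality reduces your differences to the paper's single-index terms---but the detour adds combinatorial overhead with no corresponding gain.
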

\begin{proof} We first recall that we have the element wise equivalence 
\begin{align}\label{eq:elem-eqv}
\| v \|^2_T \sim 
h^d \| \hatv \|^2_{\IR^{N_T}}
\end{align}
see \cite{EG06},  where $\hatv_{i} = \varphi_{i,T}^*(v)$.  We then have the following estimate
\begin{align}
\| \nabla^m v \|^2_{\Omega_h} &=  \| \nabla^m v \|^2_{\mcT_h^L} + \| \nabla^m v \|^2_{\mcT_h^S}
\\
&\lesssim   \| \nabla^m v \|^2_{\mcT_h^L} + 
\sum_{T \in \mcT_h^S} \Big(   \| \nabla^m (v|_{S_h(T)})^e \|^2_{T}  
+ \| \nabla^m [v]_{T,S_h(T)} \|^2_{T}  \Big)
\\
&\lesssim  \| \nabla^m v \|^2_{\mcT_h^L} + 
\underbrace{\sum_{T \in \mcT_h^S}  \| \nabla^m v \|^2_{S_h(T)}}_{\leq \| \nabla^m v \|^2_{\mcT_h^L}}
+ \sum_{T \in \mcT_h^S} h^{-2m} \|  [v]_{T,S_h(T)} \|^2_{T} 
\\
&\lesssim  \| \nabla^m v \|^2_{\mcT_h^L} 
+ \underbrace{\sum_{T \in \mcT_h^S} \sum_{i \in I_T}  h^{d-2m}  |\varphi_{i,T}^* [v]_{T,S_h(T)}|^2 }_{\bigstar}
\\ \label{eq:technical-aa}
&\lesssim 
 \| \nabla^m v \|^2_{\mcT_h^L} + \| v \|^2_{s_{h,m}} 
\end{align}
where we for each element $T\in \mcT_h^S$ added and subtracted $(v|_{S_h(T)})^e, $ used the stability 
$\| \nabla^m (v|_{S_h(T)})^e \|_T \lesssim \| \nabla^m v \|_{S_h(T)}$ of polynomial extension,  the equivalence 
(\ref{eq:elem-eqv}),  and finally the estimate 
\begin{align}\label{eq:technical-a}
\bigstar \lesssim
 \| \nabla^m v \|^2_{\mcT_h^L} + \| v \|^2_{s_{h,m}} 
\end{align}
which we verify next.  To that end we will consider the global degrees of freedom in $I^S$ and $I^L$ 
separately.

\paragraph{Case 1: $\boldmath{i} \boldmath{\in} \boldmath{I}^{\boldmath{S}}$.} Consider a global degree of freedom $i \in I^S$ 
with $T_i \subset \supp(\varphi_i)$ 
the element assigned in the stabilization.  For each $T \in \mcT_h^S$ such that $T \subset \supp(\varphi_i)$,  we wish to express $\varphi^*_{i,T}[v]_{T,S_h(T)}$ in terms of the stabilized functional $\varphi^*_{i,T_i} [v]_{T_i,S_h(T)}$.  Adding and subtracting 
$(v|_{S_h(T_i)})^e$ we have the identity 
\begin{align}
[v]_{T,S_h(T)} = [v]_{T,S_h(T_i)} + [v]_{S_h(T_i),S_h(T)}
\end{align}
Here we observe that 
\begin{align}\label{eq:T2Ti}
\varphi^*_{i,T} [v]_{T,S_h(T_i)}  = \varphi^*_{i,T_i} [v]_{T_i,S_h(T_i)} 
\end{align}
since the finite element space is conforming. 
Using these observations we have the estimate 
\begin{align}
h^{d-2m}  |\varphi_{i,T}^* [v]_{T,S_h(T)}|^2 
&
\lesssim 
h^{d-2m}  |\varphi_{i,T}^* [v]_{T,S_h(T_i)}|^2 + h^{d-2m}  |\varphi_{i,T}^* [v]_{S_h(T_i),S_h(T)}|^2 
\\
&
\lesssim 
h^{d-2m}  |\varphi_{i,T_i}^* [v]_{T_i,S_h(T_i)}|^2 + h^{-2m}  \|[v]_{S_h(T_i),S_h(T)}\|_T^2 
\\
&
\lesssim 
h^{d-2m}  |\varphi_{i,T_i}^* [v]_{T_i,S_h(T_i)}|^2 +  h^{2-2m} \|\nabla v \|^2_{\mcP_h(S_h(T_i) , S_h(T))}
\\
&
\lesssim
h^{d-2m}  |\varphi_{i,T_i}^* [v]_{T_i,S_h(T_i)}|^2 +   \|\nabla^m v \|^2_{\mcP_h(S_h(T_i), S_h(T))}
\end{align}
where we used identity (\ref{eq:T2Ti}) to replace $T$ by $T_i$, the boundedness of $\varphi_{i,T}^*$, Lemma \ref{lem:chainpoincare}, and finally an inverse 
estimate in the case $m=0$.

\paragraph{Case 2: $\boldmath{i} \boldmath{\in} \boldmath{I}^{\boldmath{L}}$.} 
For $i \in I^L$ we instead let $T_i$ be a large element in the support of $\varphi_i$, i.e. 
 $T_i \in \mcT_h^L$ and $T\subset \supp(\varphi_i)$, which gives,  using the same arguments 
 as in Case 1,
\begin{align}
h^{d-2m}  |\varphi_{i,T}^* [v]_{T,S_h(T)}|^2 
&
\lesssim 
h^{d-2m}  |\varphi_{i,T}^* [v]_{T,T_i}|^2 + h^{d-2m}  |\varphi_{i,T}^* [v]_{T_i,S_h(T)}|^2 
\\
&
\lesssim 
 h^{-2m}  \|[v]_{T_i,S_h(T)}\|_{T_i}^2 
\\
&
\lesssim 
 h^{2 - 2m} \|\nabla v \|^2_{\mcP_h(T_i, S_h(T))}
\\
&
\lesssim 
\| \nabla^m v \|^2_{\mcP_h(T_i, S_h(T))} 
\end{align}

\paragraph{Conclusion.} Using the estimates in Case 1 and 2 we obtain (\ref{eq:technical-a}) as follows
\begin{align}
\bigstar &= \sum_{T \in \mcT_h^S} \sum_{i \in I_T}  h^{d-2m}  |\varphi_{i,T}^* [v]_{T,S_h(T)}|^2
\\
&=  \sum_{T \in \mcT_h^S} \sum_{i \in I_T \cap I^S}  h^{d-2m}  |\varphi_{i,T}^* [v]_{T,S_h(T)}|^2
+ \sum_{T \in \mcT_h^S} \sum_{i \in I_T \cap I^L}  h^{d-2m}  |\varphi_{i,T}^* [v]_{T,S_h(T)}|^2
\\
&\lesssim \sum_{T \in \mcT_h^S} \sum_{i \in I_T \cap I^S}  h^{d-2m}  |\varphi_{i,T_i}^* [v]_{T_i,S_h(T_i)}|^2 
+  \|\nabla^m v \|^2_{\mcP_h(S_h(T_i), S_h(T))}
\\
&\qquad + \sum_{T \in \mcT_h^S} \sum_{i \in I_T \cap I^L}  \|\nabla^m v \|^2_{\mcP_h(T_i, S_h(T))}
\\
&\lesssim \| v \|^2_{s_{h,m}} + \| \nabla^m v \|^2_{\mcT_h^L}
\end{align}
which together with estimate (\ref{eq:technical-aa}) conclude the proof.

\end{proof}

Next we study the limit when the stabilization parameter becomes large we will see that for the the 
nodal stabilization (\ref{eq:sh-functional}) we retain optimal order approximation properties.  More precisely, 
we show that  there is an interpolation operator $\Pi_h:L^2(\Omega) \rightarrow V_h$ such that  
\begin{align}\label{eq:strong-consistency}
\boxed{ \| \Pi_h u \|_{s_h} = 0 }
\end{align}
which we may view as a strong version of (\ref{eq:stab-cons}),  and the optimal order interpolation estimate 
(\ref{eq:Pih-est}) holds. 

Next we define an interpolation operator $\widetilde{\Pi}_h : V_h \rightarrow V_h$, with the 
special property that only $v|_{\mcT_h^L}$ is used to determine the nodal values, 
\begin{align}\label{eq:tildePih-def}
V_h \ni v \mapsto \widetilde{\Pi}_h v = \sum_{i \in I^S} \varphi_{i,T_i}^*( (v|_{S_h(T_i)})^e ) \varphi_i  
+  \sum_{i \in I^L} \varphi_{i,T_i}^*(v) \varphi_i \in V_h
\end{align}
Finally,  we define the interpolation operator
\begin{align}\label{eq:Pih}
\boxed{\Pi_h: L^2(\Omega) \ni v \mapsto \widetilde{\Pi}_h \pi_h v \in V_h }
\end{align}
By construction the strong consistency (\ref{eq:strong-consistency}) holds.
 
\begin{lem}[\bf{Approximation}] There is a constant such that 
\begin{align}\label{eq:Pih-est}
\boxed{ \| v - \Pi_h v \|_{H^m(T)} \lesssim h^{k+1-m} \| v^E \|_{H^{k+1}(B_{\delta,T})} }
\end{align}
where $B_{\delta, T}$ is a ball containing $\mcT_h(T) \cup \mcT_h( S_h(T))$ with radius $\delta \sim h$ and we recall that $\mcT_h(T')$ is the set of elements that share a node with $T'$.  
\end{lem}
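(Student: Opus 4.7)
The plan is to use the triangle inequality
\[
\| v - \Pi_h v \|_{H^m(T)} \le \| v - \pi_h v \|_{H^m(T)} + \| \pi_h v - \widetilde{\Pi}_h \pi_h v \|_{H^m(T)},
\]
and treat the two terms separately. The first term is immediate from the standard Clément estimate \eqref{eq:stand_est}, which gives an $h^{k+1-m}\|v^E\|_{H^{k+1}(\mcT_h(T))}$ bound after using the stability of the extension $v \mapsto v^E$. All the work goes into the second term.

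From the definition \eqref{eq:tildePih-def}, $\widetilde{\Pi}_h$ only modifies the nodal coefficients of $\pi_h v$ at indices $i \in I^S$, replacing $\varphi_{i,T_i}^*(\pi_h v)$ by $\varphi_{i,T_i}^*((\pi_h v|_{S_h(T_i)})^e)$. By linearity of $\varphi_{i,T_i}^*$,
\[
\pi_h v - \widetilde{\Pi}_h \pi_h v = \sum_{i \in I^S} \varphi_{i,T_i}^*([\pi_h v]_{T_i,S_h(T_i)})\, \varphi_i.
\]
Restricted to $T$, only the uniformly bounded number of indices $i \in I^S$ with $T \subset \supp(\varphi_i)$ contribute. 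Applying an inverse inequality together with the element equivalence \eqref{eq:elem-eqv}, one reduces matters to bounding each coefficient:
\[
\| \pi_h v - \widetilde{\Pi}_h \pi_h v \|_{H^m(T)} \lesssim h^{d/2-m} \sum_{i} |\varphi_{i,T_i}^*([\pi_h v]_{T_i,S_h(T_i)})|.
\]

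The key step is to produce an $h^{k+1}$ bound for each coefficient. Here one exploits the fact that for any global polynomial $q \in \mathbb{P}_k(\IR^d)$ the canonical extensions from $T_i$ and $S_h(T_i)$ both equal $q$, so $[q]_{T_i,S_h(T_i)}=0$ and
\[
[\pi_h v]_{T_i,S_h(T_i)} = [\pi_h v - q]_{T_i,S_h(T_i)}.
\]
Combining the dual pairing estimate $|\varphi_{i,T_i}^*(p)| \lesssim h^{-d/2} \|p\|_{T_i}$ with polynomial stability (valid since $T_i \cup S_h(T_i)$ fits in a ball of radius $\sim h$ by assumption A2) yields
\[
|\varphi_{i,T_i}^*([\pi_h v - q]_{T_i,S_h(T_i)})| \lesssim h^{-d/2} \| \pi_h v - q \|_{T_i \cup S_h(T_i)}.
\]
Splitting $\pi_h v - q = (\pi_h v - v^E) + (v^E - q)$, the first piece is bounded by the standard Clément estimate, while for the second piece we choose $q$ via the Bramble--Hilbert lemma on a ball $B_{\delta,T}$ of radius $\sim h$ containing $\mcT_h(T_i) \cup \mcT_h(S_h(T_i))$, giving $h^{k+1} \|v^E\|_{H^{k+1}(B_{\delta,T})}$ in both cases. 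Summing over the bounded number of contributing indices and combining with the factor $h^{d/2-m}$ produces $h^{k+1-m}\|v^E\|_{H^{k+1}(B_{\delta,T})}$, which together with the first term completes the proof.

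The main obstacle is the geometric bookkeeping. One must verify that for every $i \in I^S$ with $T \subset \supp(\varphi_i)$, both the assigned element $T_i$ and its stabilization partner $S_h(T_i)$ lie within distance $\sim h$ of $T$, so that all of $\mcT_h(T_i)$, $\mcT_h(S_h(T_i))$ and in particular $\mcT_h(T) \cup \mcT_h(S_h(T))$ can be enclosed in a single ball $B_{\delta,T}$ of radius $\delta \sim h$ over which we apply Bramble--Hilbert. This uses quasi-uniformity of the mesh together with the path condition in assumption A2, which ensures that the chain from $T_i$ to $S_h(T_i)$ has uniformly bounded length.
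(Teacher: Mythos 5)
Your proof is correct, but it follows a genuinely different route from the paper's. The paper establishes the $L^2$ stability $\|\Pi_h v\|_T \lesssim \|v^E\|_{B_{\delta,T}}$ directly (by unwinding the definition of $\Pi_h$, using the element equivalence \eqref{eq:elem-eqv} and stability of the canonical polynomial extension), combines this with the polynomial reproduction $\Pi_h w = w$ for $w\in\mathbb{P}_k$, and concludes via the standard triangle inequality, an inverse estimate, and Bramble--Hilbert applied to $\Pi_h$ as a single operator. You instead split $\Pi_h = \widetilde{\Pi}_h \circ \pi_h$ and work perturbatively: you use the Cl\'ement estimate for $v-\pi_h v$ and then analyze $\pi_h v - \widetilde{\Pi}_h\pi_h v$ term by term, exploiting that $\widetilde{\Pi}_h$ only alters the coefficients at $i\in I^S$ and that each correction is exactly the nodal jump $\varphi_{i,T_i}^*([\pi_h v]_{T_i,S_h(T_i)})$, which vanishes on global polynomials; this lets you apply Bramble--Hilbert to the jump rather than to the operator. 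Your route is a little more explicit about the structure of $\widetilde\Pi_h$ and makes it visually clear that the perturbation is supported only on the cut-zone degrees of freedom, while the paper's route is shorter and more modular (stability $+$ reproduction is the textbook pattern and more easily reused). The geometric bookkeeping you flag at the end is indeed the shared crux: both arguments rely on A2 and quasi-uniformity to fit $\mcT_h(T_i)\cup\mcT_h(S_h(T_i))$ inside a ball of radius $\sim h$; this is exactly where the paper is also somewhat terse. One minor remark: in the jump bound you should make explicit that $\|((\pi_h v - q)|_{S_h(T_i)})^e\|_{T_i}\lesssim\|\pi_h v - q\|_{S_h(T_i)}$ uses the polynomial stability of the extension over a ball of diameter $\sim h$, which follows from A2 just as you indicate.
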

\begin{proof} Let $I_T = I_T^S \cup I_T^L$ be the indices to the basis functions $\varphi_i$ with 
$T \subset \supp(\varphi_i)$.   We first note that we have the $L^2$ stability 
\begin{align}
\| \Pi_h v \|_T \lesssim \| v^E \|_{B_{\delta,T}}
\end{align}
since 
\begin{align}
\| \Pi_h v \|^2_T &\lesssim \sum_{i \in I_T^S} h^d |\varphi_i^*(\pi_h v|_{S_h(T_i)})^e)|^2   
+ \sum_{i \in I_T^L} h^d |\varphi_i^*(\pi_h v)|^2  
\\
&\lesssim \sum_{i \in I_T^S}  \| (\pi_h v|_{S_h(T_i)})^e \|_{T_i}^2   + \sum_{i \in I_T^L}  \|\pi_h v\|_{T_i}^2  
\\
&\lesssim \sum_{i \in I_T^S}  \| \pi_h v \|_{S_h(T_i)}^2   + \sum_{i \in I_T^L}  \|\pi_h v\|_{T_i}^2  
\\
&\lesssim \sum_{i \in I_T^S}  \|  v^E  \|_{\mcT_h(S_h(T_i))}^2   + \sum_{i \in I_T^L}  \| v^E \|_{\mcT_h(T_i)}^2  
\\
&\lesssim \| v^E \|^2_{B_{\delta,T}}
\end{align}
where $T_i$ is a stabilized element for $i \in I_T^S$ and a large element for $i \in I_T^L$, and $B_{\delta,T}$ 
is a ball such that $\mcT_h(S_h(T_i)) \cup \mcT_h(T_i) \subset B_{\delta,T}$. We note that there is such a ball 
with radius $\delta \lesssim h$, due to shape regularity and assumption A2, for each $T$ and that the set of all 
such balls has uniformly bounded intersection. Furthermore, we used the stability 
of polynomial extension and the stability of the element $L^2$ and the stability $\| v \|_T = \|\pi_{h,Cl} v^E \|_{T_i} \lesssim \| v \|_{\mcT_h(T_i)}$. 

Next to prove (\ref{eq:Pih-est}) we first note that for  $w \in \mathbb{P}_k(B_{\delta,T})$ we have 
\begin{align}
(\pi_h w) |_T = w|_T 
\end{align}
which gives 
\begin{align}
\| v - \pi_h v \|_{H^m(T)} &\leq \| v - w \|_{H^m(T)} + \| w - \pi_h v \|_{H^m(T)} 
\\
& \leq \| v - w \|_{H^m(T)} +h^{-m} \| \pi_h ( w -v ) \|_{H^m(T)} 
\\
&\lesssim \| v - w \|_{H^m(T)} + h^{-m} \| w - v \|_{B_{\delta,T}}
\end{align}
Finally,  using the Bramble-Hilbert Lemma,  see \cite{BreSco},  we may choose $w \in \mathbb{P}_k(B_{\delta,T})$  such that 
\begin{equation}
\| v - \pi_h v \|_{H^m(T)} \lesssim h^{k+1-m} \| v \|_{H^{k+1}(B_{\delta,T})}
\end{equation}
as was to be shown.
\end{proof}

\subsection{Relation to Discrete Extension Operators}

We note that 
\begin{equation}
\widetilde{\Pi}_h : V_h \rightarrow V_h^E \subset V_h
\end{equation}
where $V_h^E = \text{Im}(\widetilde{\Pi}_h) = \widetilde{\Pi}_h V$ is a proper subspace of $V_h$ in fact 
\begin{equation}
\text{ker}(\widetilde{ \Pi}_h ) = \Span \{\varphi_i \in \mcB_h : i \in I^S \}
\end{equation} 
with dimension $\text{dim}(\text{ker}(\widetilde{\Pi}_h ) ) = |I^S|$ and as a consequence $\text{dim}(V_h^E) = |I^L|$. 
The operator $\widetilde{\Pi}_h$ restricted to $V_h$ is indeed identical to extension operators developed in \cite{BMV18a} 
and \cite{BurHanLar21a}.

\subsection{Application to CutFEM}

For completeness, we include an application of the stabilization forms to CutFEM.   We present two error estimates, one 
based on the weak consistency and one on the strong consistency provided by the operator $\Pi_h$. To that end let us consider the elliptic model problem 
\begin{equation}\label{eq:mod-prob}
-\Delta u  = f \quad \text{in $\Omega$}, \qquad u = 0  \quad \text{on $\partial \Omega$}
\end{equation}
and the cut finite element method: find $u_h \in V_h$ such that 
\begin{equation}\label{eq:fem}
A_h(u_h,v) = l_h(v) 
\end{equation}
where the forms are defined by
\begin{align}\label{eq:Ah}
A_h(v,w) &= a_h(v,w) + s_{h}(v,w)
\\
a_h(v,w) &= (\nabla v, \nabla w)_\Omega - (\nabla_n v, w)_{\partial \Omega} - (v, \nabla_n w)_{\partial \Omega} 
+ \beta h^{-1} (v, w)_{\partial \Omega}
\\
l_h(v) &= (f,v)_\Omega 
\end{align}
where $\beta \in [\beta_0,\infty)$ for some $\beta_0>0$. Using partial integration we 
note that the exact solution to (\ref{eq:mod-prob}) satisfies  the unstabilized equation
\begin{align}
a_h(u,v) = l_h(v)\qquad \forall v \in V_h
\end{align}
In the following we shall account for the effect of the stabilization parameter $\tau$, which is hidden inside the stabilization form, and to make that dependence clear we from here on adopt the notation $s_{h,\tau}$. We also assume that there is a parameter $\tau_0>0$ such that 
$\tau \in [\tau_0,\infty)$. Define the Nitsche norms
\begin{align}
\tn v \tn_h^2 &= \| \nabla v \|^2_\Omega
+  \beta^{-1} h \| \nabla_n v \|^2_{\partial \Omega} + \beta h^{-1} \| v \|^2_{\partial \Omega}
\\
\tn v \tn_{h,\bigstar}^2 &=\tn v \tn_h^2 + \| v \|^2_{s_{h,\tau}}
\end{align}
Assuming that the stabilization form $s_{h,\tau}$ is defined in such a way that 
\begin{equation}\label{eq:nitsche-inverse}
h \|\nabla_n v \|^2_{\partial \Omega} 
\lesssim  \| \nabla v \|^2_\Omega + \|v \|^2_{s_{h,\tau_*}}
\end{equation}
for some fixed positive parameter $\tau_*$. Then the form $a_h$ is coercive on $V_h$ for sufficiently large $\beta\in [\beta_0, \infty)$,
\begin{align}\label{eq:nitsche-coer}
\tn v \tn_{h,\bigstar}^2 \lesssim A_h(v,v) \qquad v \in V_h
\end{align}
and $\tau \geq \tau_0 \gtrsim \beta^{-1}$. For the convenience of the reader we have included the proof of this result in Appendix \ref{app:coer}. We also have the continuity 
\begin{equation}
A_h(v,w) \lesssim \tn v \tn_{h,\bigstar} \tn w \tn_{h,\bigstar}, \qquad v,w \in V_h + H^{3/2+\epsilon}(\Omega)
\end{equation}
which follows directly from the Cauchy-Schwarz inequality.

\begin{rem} The inverse inequality (\ref{eq:nitsche-inverse}) holds for instance for the 
different stabilization forms in Examples 1-5 with $m=1$.
\end{rem}

\begin{prop} If the stabilization form $s_{h,\tau}$ is such that the stability estimate 
 (\ref{eq:stab-est})  and the consistency estimate (\ref{eq:stab-cons})  hold with $m=1$,  and 
 the stabilization form implies satisfaction of the inverse inequality (\ref{eq:nitsche-inverse}). Then there is a  constant such that for $\tau\geq \tau_0$,
\begin{equation}\label{eq:error_bound}
\tn u - u_h \tn_h \lesssim (1 + \tau^{1/2}) h^k \| u \|_{H^{k+1}(\Omega)}
\end{equation}
If in addition there exists an interpolant $\pi_h$ satisfying \eqref{eq:disc_strong_cons} and \eqref{eq:LF-approx}, then 
\begin{equation}\label{eq:error_bound-b}
\tn u - u_h \tn_h \lesssim  h^k \| u \|_{H^{k+1}(\Omega)}
\end{equation}
\end{prop}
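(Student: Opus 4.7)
The plan is to follow a standard Strang-type decomposition, writing $u - u_h = (u - \pi_h u) + (\pi_h u - u_h)$ with $\pi_h$ the interpolant of \eqref{eq:interpol}, and to treat the interpolation error with \eqref{eq:stand_est} and the discrete error with coercivity plus a Galerkin orthogonality in which the stabilization shows up as a consistency residual. The essential point is that the continuous solution satisfies only the \emph{unstabilized} identity $a_h(u,v) = l_h(v)$ for all $v \in V_h$, so subtracting from $A_h(u_h,v) = l_h(v)$ yields, for $e_h := \pi_h u - u_h \in V_h$,
\[
A_h(e_h, e_h) = a_h(\pi_h u - u, e_h) + s_{h,\tau}(\pi_h u, e_h).
\]
Crucially, the stabilization residual is evaluated on $\pi_h u \in V_h$ rather than on $u$, so the weak consistency bound \eqref{eq:stab-cons} with $m=1$ applies directly.

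Coercivity \eqref{eq:nitsche-coer} gives $\tn e_h\tn_{h,\bigstar}^2 \lesssim A_h(e_h, e_h)$; continuity of $a_h$ in the $\tn\cdot\tn_{h,\bigstar}$ norm together with Cauchy--Schwarz on $s_{h,\tau}$ bound the right-hand side by $(\tn u - \pi_h u\tn_{h,\bigstar} + \|\pi_h u\|_{s_{h,\tau}})\tn e_h\tn_{h,\bigstar}$. Cancelling one factor of $\tn e_h\tn_{h,\bigstar}$, inserting the approximation bound $\tn u - \pi_h u\tn_{h,\bigstar} \lesssim (1+\tau^{1/2})h^k \|u\|_{H^{k+1}(\Omega)}$ obtained by combining \eqref{eq:stand_est} with \eqref{eq:stab-cons}, and applying the triangle inequality to $\tn u - u_h\tn_h$ yield \eqref{eq:error_bound}.

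For the improved bound \eqref{eq:error_bound-b}, I would repeat the same argument with $\pi_h$ replaced by the strongly consistent interpolant $\pi_h^M$ of \eqref{eq:interpolmerged}. Strong consistency \eqref{eq:disc_strong_cons} eliminates $s_{h,\tau}(\pi_h^M u, e_h)$ outright, while \eqref{eq:LF-approx} preserves the $O(h^k)$ bound on the interpolation term independently of $\tau$, giving the $\tau$-uniform estimate.

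The main technical subtlety lies not in the Strang loop itself but in controlling the Nitsche boundary terms inside $\tn u - \pi_h u\tn_h$: since $\partial\Omega$ may cut elements arbitrarily, bounding $h^{1/2}\|\nabla_n(u - \pi_h u)\|_{\partial\Omega}$ and $h^{-1/2}\|u - \pi_h u\|_{\partial\Omega}$ requires a cut-trace inequality combining an interior $L^2$ contribution with a full $H^1$ contribution on the active mesh. This is standard under shape regularity but is the only step that is not purely formal; verifying $\tau$-independence of the continuity constant of $a_h$ and invoking \eqref{eq:nitsche-inverse} to absorb the flux term into the $\bigstar$-norm is then routine.
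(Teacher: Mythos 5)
Your argument is essentially the paper's proof. The paper splits the error, bounds the discrete part via coercivity and the Galerkin identity $A_h(\pi_h u - u_h,v) = a_h(\pi_h u - u,v) + s_{h,\tau}(\pi_h u,v)$, and applies weak consistency to $\|\pi_h u\|_{s_{h,\tau}}$; you do exactly the same, differing only in that you test directly with $v = e_h$ and cancel a factor rather than passing through the equivalent $\sup_{v}$ formulation, and you name the strongly consistent interpolant $\pi_h^M$ explicitly where the paper reuses the symbol $\pi_h$. One small notational slip worth correcting: you write the interpolation term as $\tn u - \pi_h u\tn_{h,\bigstar}$, but since $\|u - \pi_h u\|_{s_{h,\tau}}$ involves element-wise polynomial extensions it is not defined for a general $u\in H^{k+1}(\Omega)$; the decomposition you already invoke is precisely what avoids this, so the correct quantity is $\tn u - \pi_h u\tn_h + \|\pi_h u\|_{s_{h,\tau}}$, which is what the paper writes in \eqref{eq:nit-c}. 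Your closing remark about the cut-trace estimates hiding inside $\tn u - \pi_h u\tn_h$ is a fair observation; the paper elides this under the cited interpolation bound.
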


\begin{proof} Splitting the error by adding and subtracting an interpolant we get
\begin{align}\label{eq:nit-a}
\tn u - u_h \tn_h \leq \tn u - \pi_hu \tn_h + \tn \pi_hu - u_h \tn_h 
\lesssim  h^k \| u \|_{H^{k+1}(\Omega)} +  \tn \pi_h u - u_h \tn_h 
\end{align}
For the second term we use the coercivity as follows
\begin{align}\label{eq:nit-b}
\tn \pi_h u - u_h \tn_h \leq \tn \pi_h u - u_h \tn_{h,\bigstar}
\lesssim \sup_{v \in V_h }\frac{A_h(\pi_h u - u_h,v)}{\tn v \tn_{h,\bigstar}}
\end{align}
where we have the identity 
\begin{align}
A_h(\pi_h u - u_h,v) &= A_h(\pi_h u, v ) - l_h(v) 
\\
&=  A_h(\pi_h u, v ) -  a_h(u,v) 
\\ \label{eq:pert_dev}
&= a_h( \pi_h u - u, v) + s_{h,\tau}(\pi_h u, v)
\end{align}
Estimating the right hand side gives
\begin{align}
|A_h(\pi_h u - u_h,v)| &\lesssim \tn  \pi_h u - u \tn_h \tn v \tn_h + \|\pi_h u\|_{s_{h,\tau}} \|v\|
_{s_{h,\tau}}
\\ \label{eq:nit-c}
&\lesssim ( \tn  \pi_h u - u \tn_h + \|\pi_h u\|_{s_{h,\tau}} )  \tn v \tn_{h,\bigstar}
\end{align}
for $v\in V_h$. Combining the bounds (\ref{eq:nit-a}), (\ref{eq:nit-b}), and (\ref{eq:nit-c}), we 
obtain 
\begin{align}
\tn u - u_h \tn_h &\lesssim \tn \pi_h u - u_h \tn_h  + \|\pi_h u\|_{s_{h,\tau}} 
\\ \label{eq:nit-d}
&\lesssim  h^{k} \| u \|_{H^{k+1}(\Omega)}  + \tau^{1/2}  h^{k} \| u \|_{H^{k+1}(\Omega)} 
\end{align}
where we used the interpolation estimate (\ref{eq:interpol}) and the weak consistency estimate (\ref{eq:stab-cons}). The second claim (\ref{eq:error_bound-b}) follows by noting that $\|\pi_h u\|_{s_{h,\tau}}$ in \eqref{eq:nit-d} is zero and hence $s_{h,\tau}$ does not contribute to the upper bound.
\end{proof}

\begin{rem} Note that the constants in the resulting estimate will depend on the choices of the stabilization parameter 
$\tau$  and the Nitsche penalty parameter $\beta$.  It follows from the proof of the coercivity estimate (\ref{eq:nitsche-coer}) that we may indeed take $\tau 
\sim \beta^{-1}$. Thus $\tau$ is a parameter of moderate size or even quite small in practice. Nevertheless, formulations that are less sensitive to parameters are, in general preferable.
\end{rem}

\begin{rem}
The interpolant $\Pi_h$ associated to the stabilization form $s_{h,m}$ defined by (\ref{eq:sh-functional}) with 
$m=1$ and $\alpha_1 = d-2$ satisfies 
\eqref{eq:disc_strong_cons} and \eqref{eq:LF-approx} as was shown in \eqref{eq:strong-consistency} and \eqref{eq:Pih-est}.
\end{rem}


\begin{prop} If the stabilization form $s_{h,\tau}$ is defined by (\ref{eq:sh-functional}) with $\alpha = d-2$.  Then 
\begin{equation}
\lim_{\tau \rightarrow \infty} u_h = \widetilde{\Pi}_h u_h  
\end{equation}
\end{prop}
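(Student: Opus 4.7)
The plan is to extract, from the coercivity estimate for $A_h$, a bound on $\|u_h\|_{s_{h,\tau}}$ that is uniform in $\tau$, and then observe that the nodal stabilization functionals $\varphi^*_{i,T_i}([u_h]_{T_i,S_h(T_i)})$ are precisely the nodal residuals $(u_h - \widetilde{\Pi}_h u_h)(x_i)$, so that their vanishing in the limit gives the claim.

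First, I would test the discrete problem (\ref{eq:fem}) with $v = u_h$ and use (\ref{eq:nitsche-coer}) together with Cauchy--Schwarz and a Poincar\'e-type bound to obtain
\[
\tn u_h \tn_{h,\bigstar}^2 \lesssim A_h(u_h,u_h) = l_h(u_h) \lesssim \|f\|_\Omega \tn u_h \tn_{h,\bigstar},
\]
i.e.\ $\tn u_h \tn_{h,\bigstar} \lesssim \|f\|_\Omega$ with a constant independent of $\tau$ (raising $\tau$ only strengthens coercivity). In particular, reading off the stabilization contribution,
\[
\sum_{i \in I^S} \tau h^{d-2} |\varphi^*_{i,T_i}([u_h]_{T_i,S_h(T_i)})|^2 = \|u_h\|_{s_{h,\tau}}^2 \lesssim \|f\|_\Omega^2,
\]
so each individual functional satisfies $|\varphi^*_{i,T_i}([u_h]_{T_i,S_h(T_i)})|^2 \lesssim \|f\|_\Omega^2 / (\tau h^{d-2}) \to 0$ as $\tau \to \infty$ at fixed $h$.

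Next, interpreting $\varphi^*_{i,T_i}$ as point evaluation at the Lagrange node $x_i \in T_i$ and expanding the jump,
\[
\varphi^*_{i,T_i}([u_h]_{T_i,S_h(T_i)}) = u_h(x_i) - (u_h|_{S_h(T_i)})^e(x_i) = (u_h - \widetilde{\Pi}_h u_h)(x_i)
\]
directly from the definition (\ref{eq:tildePih-def}) of $\widetilde{\Pi}_h$. For $i \in I^L$, the construction of $\widetilde{\Pi}_h$ gives $(u_h - \widetilde{\Pi}_h u_h)(x_i) = 0$ identically. Hence all nodal coordinates of $u_h - \widetilde{\Pi}_h u_h$ tend to zero, and since $V_h$ is finite-dimensional, this is equivalent to $u_h - \widetilde{\Pi}_h u_h \to 0$ in any chosen norm.

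The main technical point will be making sure the $\tau$-uniformity of the a priori bound is genuinely preserved, since the norm $\tn \cdot \tn_{h,\bigstar}$ itself depends on $\tau$. This is mild because the $s_{h,\tau}$ contribution is added with the correct sign on the left-hand side of the coercivity inequality, so increasing $\tau$ only helps; one merely needs the coercivity constant in (\ref{eq:nitsche-coer}) to be bounded away from zero, which follows from the hypothesis $\tau \geq \tau_0 \gtrsim \beta^{-1}$. If one wants to additionally identify the limit $u_h^* = \lim_{\tau \to \infty} u_h$, one restricts the discrete equation to test functions $v \in V_h^E \subset \ker s_h$ (using that $s_h(u_h,v) = 0$ whenever $v \in \ker s_h$), obtains the well-posed constrained Galerkin problem $a_h(u_h^*, v) = l_h(v)$ for all $v \in V_h^E$, and invokes uniqueness together with compactness of the bounded family $\{u_h(\tau)\}$ to conclude full convergence.
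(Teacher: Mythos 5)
Your proof follows the same path as the paper's: test with $v = u_h$, use coercivity to get a $\tau$-uniform bound $\|u_h\|^2_{s_{h,\tau}} \lesssim \|f\|^2_\Omega$, observe that $\varphi^*_{i,T_i}([u_h]_{T_i,S_h(T_i)}) = \varphi^*_{i,T_i}(u_h - \widetilde{\Pi}_h u_h)$ from the definition of $\widetilde{\Pi}_h$, note that the $I^L$ degrees of freedom of $u_h$ and $\widetilde{\Pi}_h u_h$ already agree, and conclude by finite-dimensionality. The closing remarks on why the coercivity constant is genuinely $\tau$-uniform and on identifying the limit via the constrained problem on $V_h^E$ are correct and go slightly beyond what the paper records, but the core argument is the same.
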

\begin{proof} First we observe that it follows from the definition (\ref{eq:tildePih-def}) of $\widetilde{\Pi}_h$ that 
$\varphi_i^*(\widetilde{\Pi}_h v) = \varphi_i^*(v)$ for all $i \in I^L$.  Thus the nodal values corresponding to indices $i \in I^L$ do not change and it remains to study the nodal values corresponding to $i \in I^S$.  Setting $v = u_h$ in (\ref{eq:fem}) and using coercivity we get 
\begin{equation}
l_h(u_h) = A_h(u_h,u_h) \gtrsim \tn u_h \tn_{h,\bigstar}^2 = \tn u_h \tn_h^2 + \| u_h \|^2_{s_{h,\tau}}
\end{equation}
Furthermore, we have 
\begin{align}
|l_h(v)| &\lesssim \| f \|_\Omega \| v \|_\Omega  \lesssim  \| f \|_\Omega \tn v \tn_h
\end{align}
where we used the Poincar\'e estimate $\| v \|_\Omega \lesssim \tn v \tn_h$ for $v \in V_h$.  Combining the estimates we get 
\begin{align}
\tn u_h \tn_h^2 + \| u_h \|^2_{s_{h,\tau - \tau_0}} \leq C \| f \|_\Omega \tn v \tn_h 
\leq \frac12 C^2 \| f \|^2_\Omega  +  \frac12 \tn v \tn_h^2
\end{align}
which directly implies
\begin{align}
\frac12 \tn u_h \tn^2 + \| u_h \|^2_{s_{h,\tau}} \leq \frac12 C^2 \| f \|_\Omega^2 
\end{align}
Next using the special choice  (\ref{eq:sh-functional}) of the stabilization form we have 
\begin{align}
\|v \|^2_{s_{h,\tau}}
= \sum_{i\in I^S}   \tau h^{d-2} |\varphi^*_{i, T_i} ([v]_{T_i,S_h(T_i)})|^2
\end{align}
for $v \in V_h$, and from definition (\ref{eq:tildePih-def}) of $\widetilde{\Pi}_h$ we obtain 
the identity 
\begin{equation}
 \varphi^*_{i, T_i}([v]_{T_i,S_h(T_i)}) =  \varphi^*_{i, T_i}(v - (v|_{S_h(T_i)})^e) = \varphi^*_{i, T_i}(v - \widetilde{\Pi}_h v)
\end{equation}
which gives
\begin{align}
\| v \|^2_{s_{h, \tau}}  = \sum_{i\in I^S} \tau h^{d-2} | \varphi_{i,T_i}^*( v - \widetilde{\Pi}_h v)) |^2 
\end{align}
We conclude that 
\begin{align}
 \sum_{i\in I^S} h^{d-2} | \varphi_{i,T_i}^*( u_h - \widetilde{\Pi}_h u_h ) |^2  \lesssim \frac{1}{\tau} \|f \|_\Omega^2
\end{align}
and the desired result follows.
\end{proof}

\section{Numerical Examples}

In the numerical examples below, we compare the forms taken from Example 1,
(face penalty, (\ref{eq:face-penalty})), from 
Example 3 ($L^2$ penalty on the gradient of the difference of the solution and local extension from the interior, (\ref{eq:gradstab}))
and from the nodal stabilization (\ref{eq:sh-functional}). We show that face penalty is most sensitive to locking for large $\tau$ and that the nodal stabilization is completely robust.

We consider the Poisson's equation on a circle of radius $1/2$ with center at the origin. On this circle we use the constructed solution
\begin{equation}
u=\cos(\pi r), \quad r=\sqrt{x^2+y^2}
\end{equation}
corresponding to the right--hand side
\begin{equation}
f=(\pi(\sin(\pi r) + \pi r\cos(\pi r)))/r
\end{equation}
We set the penalty parameter in the different cases to $\tau\in \{10^{-1},10,10^3\}$ and display convergence and condition numbers for the different approaches for increasing $\tau$.

In Figs. \ref{point:conv1}--\ref{point:conv2} we give convergence plots for the point oriented method. We note that the convergence
pattern is unaffected by the increased penalty; in Figs. \ref{l2:conv1}--\ref{l2:conv2} we give the corresponding results for the gradient penalty and in Figs. \ref{edge:conv1}--\ref{edge:conv2} for the face penalty. The dashed and dotted lines, indicating first and second-order convergence, respectively, are fixed in all diagrams. We note the slight locking for the gradient penalty and severe locking for edge stabilization on coarse meshes. This is also visible in the elevation plots on coarse and fine meshes with $\tau=10^3$, for point oriented in Fig.
\ref{point:elev}, for gradient penalty in Fig. \ref{l2:elev}, where small instabilities near the boundary are visible on the coarse mesh, and
for the face penalty in Fig. \ref{edge:elev}, which exhibits severe locking on coarse meshes and has a visible effect also on the fine mesh.

Finally, in Figs. \ref{point:cond}, \ref{l2:cond}, and \ref{edge:cond}, we give the plots of the condition number for the different methods, for $\tau=10^{-1}$ and for $\tau=10^3$. The rate given by the dashed line is $O(h^{-2})$, and we note that all methods give approximately the same conditioning of the discrete system, with increasing $\tau$ adversely affecting the condition number. \textcolor{black}{In all graphics, the natural logarithm of the plotted quantity is reported on the ordinate.}

\section{Appendix}
\appendix
\section{$\boldmath{L}^{\boldmath{2}}$ Stability for Nodal Stabilization}
Here we include the simplified proof of the stability estimate (\ref{eq:nodalstability}) in the case $m=0$,
\begin{align}
 \| v \|^2_{\Omega_h} \lesssim \| \nabla^m v \|^2_\Omega + \| v \|^2_{s_{h,0}}, \quad v \in V_h
\end{align}
for the nodal stabilization $s_{h,0}$ defined by  (\ref{eq:sh-functional}) with $m=0$.

\begin{proof}
We have 
\begin{align}
\|  v \|^2_{\Omega_h} &= \| v \|^2_{\mcT_h^S} + \| v \|^2_{\mcT_h^L}
\\
&\lesssim  h^d \sum_{T \in \mcT_h^S}  \| \hatv_T \|^2_{\IR^{N_T}} + \| v \|^2_{\mcT_h^L}
\\
&\lesssim  h^d\| \hatv^S \|^2_{\IR^{N_S}} + \| v \|^2_{\mcT_h^L}
\end{align}
Thus it follows that,  if the stabilization form satisfies 
\begin{align}\label{eq:stab-property-a}
h^d \| \hatv^S \|^2_{\IR^{N_S}}  \lesssim \| v \|^2_{\mcT_h^L} + \| v \|^2_{s_{h,0}} 
\end{align}
the desired stability estimate (\ref{eq:stab-est}) holds.  To verify (\ref{eq:stab-property-a}) we start from the 
definition (\ref{eq:sh-functional}) of the stabilization form.  For each $i \in I_S$,  we have the identity 
\begin{align}
\hatv_i = \varphi_{i,T_i}^*(v) =  \varphi_{i,T_i}^*((v|_{S_h(T_i)})^e) +  \varphi_{i,T_i}^*([v]_{T_i,S_h(T_i)})
\end{align}
Using the triangle inequality we get 
\begin{align}
h^{d} |\hatv_i|^2 &=h^{d} | \varphi_{i,T_i}^*(v)|^2  
\\
&\lesssim   h^d | \varphi_{i,T_i}^*((v|_{S_h(T_i)})^e) |^2 
+ h^d | \varphi_{i,T_i}^*([v]_{T_i,S_h(T_i)}) |^2
\\
&\lesssim   \| (v|_{S_h(T_i)})^e \|^2_T 
+ h^d | \varphi_{i,T_i}^*([v]_{T_i,S_h(T_i)}) |^2
\\
&\lesssim  \| v \|^2_{S_h(T_i)}
+ h^d | \varphi_{i,T_i}^*([v]_{T_i,S_h(T_i)}) |^2
\end{align}
where we used an inverse inequality and the stability of the canonical extension.  Summing over all $i \in I^S$ 
we get 
\begin{align}
h^d \| \hatv^S \|^2_{\IR^{N_S}}  &= \sum_{i \in I^S} h^{d} |\hatv_i|^2 
\\
&\lesssim \sum_{i \in I^S}  \| v \|^2_{S_h(T_i)}
+ h^d | \varphi_{i,T_i}^*([v]_{T_i,S_h(T_i)}) |^2
\\
&\lesssim \| v \|^2_{\mcT_h^L}
+  \sum_{i \in I^S}  h^d | \varphi_{i,T_i}^*([v]_{T_i,S_h(T_i)}) |^2
\\
&\lesssim \| v \|^2_{\Omega}
+ \| v \|^2_{s_{h,0}}
\end{align}
as was to be shown.
\end{proof}

\section{Coercivity}
\label{app:coer}

\begin{lem}\label{lem:nitsche-coer}
The form $A_h$, defined in (\ref{eq:Ah}) is coercive on $V_h$ for sufficiently large 
$\beta\in [\beta_0, \infty)$,
\begin{align}\label{eq:nitsche-coer-lem}
\tn v \tn_{h,\bigstar}^2 \lesssim A_h(v,v) \qquad v \in V_h
\end{align}
and $\tau \in [\tau_0,\infty)$ where $\tau_0 \gtrsim \beta^{-1}$. 
\end{lem}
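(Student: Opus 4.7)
The plan is to carry out the standard Nitsche coercivity argument, paying careful attention to how the ghost-penalty stabilization is used to absorb the bulk--boundary cross term. First I would expand $A_h(v,v)$ directly from the definition, giving
\begin{equation*}
A_h(v,v) = \|\nabla v\|^2_\Omega - 2(\nabla_n v, v)_{\partial \Omega} + \beta h^{-1}\|v\|^2_{\partial\Omega} + \|v\|^2_{s_{h,\tau}}.
\end{equation*}
Three of the four terms are nonnegative; the only obstruction is the indefinite cross term, which I would estimate by Cauchy--Schwarz and Young's inequality weighted by $\beta$:
\begin{equation*}
2|(\nabla_n v, v)_{\partial\Omega}| \leq 2\epsilon \beta^{-1} h \|\nabla_n v\|^2_{\partial\Omega} + (2\epsilon)^{-1}\beta h^{-1}\|v\|^2_{\partial\Omega}.
\end{equation*}
With $\epsilon$ small (say $\epsilon = 1/4$), half of the Nitsche penalty piece $\beta h^{-1}\|v\|^2_{\partial\Omega}$ is preserved, while a multiple of $\beta^{-1} h \|\nabla_n v\|^2_{\partial\Omega}$ is left to be controlled.

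The second key step is to invoke the inverse-trace assumption \eqref{eq:nitsche-inverse}, which is exactly what the ghost penalty is designed to deliver, to bound the remaining conormal trace by
\begin{equation*}
\beta^{-1}h \|\nabla_n v\|^2_{\partial\Omega} \lesssim \beta^{-1}\|\nabla v\|^2_\Omega + \beta^{-1}\|v\|^2_{s_{h,\tau_*}}.
\end{equation*}
For $\beta \geq \beta_0$ sufficiently large, the gradient term is absorbed in $\|\nabla v\|^2_\Omega$ coming from $A_h(v,v)$. Since $s_{h,\tau}$ depends linearly on the stabilization parameter, $\|v\|^2_{s_{h,\tau_*}} = (\tau_*/\tau)\|v\|^2_{s_{h,\tau}}$, so the condition $\tau \geq \tau_0 \gtrsim \beta^{-1}\tau_*$ lets the stabilization remainder be absorbed in $\|v\|^2_{s_{h,\tau}}$. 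This yields the lower bound
\begin{equation*}
A_h(v,v) \gtrsim \|\nabla v\|^2_\Omega + \beta h^{-1}\|v\|^2_{\partial\Omega} + \|v\|^2_{s_{h,\tau}},
\end{equation*}
capturing three of the four pieces of $\tn v \tn_{h,\bigstar}^2$.

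To finish I would recover the missing $\beta^{-1} h \|\nabla_n v\|^2_{\partial\Omega}$ contribution in $\tn v \tn_h^2$ by a second application of \eqref{eq:nitsche-inverse}, bounding it by $\beta^{-1}(\|\nabla v\|^2_\Omega + \|v\|^2_{s_{h,\tau_*}})$ and then by the quantities already controlled, using the same $\tau$--$\beta$ matching as above. Combining the two bounds gives \eqref{eq:nitsche-coer-lem}.

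The main obstacle is the bookkeeping between $\tau$ and the reference parameter $\tau_*$ in the hypothesis \eqref{eq:nitsche-inverse}: because the inverse inequality is only assumed at one fixed value $\tau_*$, one must exploit the linearity of $s_{h,\tau}$ in $\tau$ and impose the explicit lower bound $\tau_0 \gtrsim \beta^{-1}\tau_*$ on the admissible stabilization parameter. Aside from this, the argument is the textbook Nitsche coercivity proof, and choosing $\beta_0$ large enough is straightforward once the constants in \eqref{eq:nitsche-inverse} are fixed.
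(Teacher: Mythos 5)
Your argument follows the paper's own proof very closely: expand $A_h(v,v)$, apply Cauchy--Schwarz and Young to the cross term, absorb the conormal trace via \eqref{eq:nitsche-inverse}, exploit the linear dependence of $s_{h,\tau}$ on $\tau$ to match $\tau_*$ against $\tau$, and finally apply the inverse-trace bound a second time to recover the $\beta^{-1}h\|\nabla_n v\|^2_{\partial\Omega}$ piece of $\tn\cdot\tn_{h,\bigstar}$; the $\tau$--$\tau_*$ bookkeeping you single out is precisely the one wrinkle the paper highlights. There is, however, a genuine slip in your Young step: with your parametrization the penalty contribution is $(2\epsilon)^{-1}\beta h^{-1}\|v\|^2_{\partial\Omega}$, and $\epsilon=1/4$ gives $(2\epsilon)^{-1}=2$, so subtracting $2\beta h^{-1}\|v\|^2_{\partial\Omega}$ from the single $\beta h^{-1}\|v\|^2_{\partial\Omega}$ in $A_h(v,v)$ leaves a \emph{negative} remainder, not ``half preserved.'' You need $\epsilon\geq 1$ (the paper's computation corresponds to $\epsilon=1$); making $\epsilon$ small shrinks rather than preserves the penalty, while simultaneously reducing the coefficient on $h\|\nabla_n v\|^2_{\partial\Omega}$ that is passed to the inverse-trace inequality. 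The intuition is the reverse of what you state: you should be willing to inflate the conormal-trace coefficient, because that term is exactly what \eqref{eq:nitsche-inverse} controls, in order to keep a positive fraction of the boundary penalty. Aside from this direction-of-$\epsilon$ error the proof is correct and essentially identical to the paper's.
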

\begin{proof}[Proof of (\ref{eq:nitsche-coer})] We have
\begin{align}
A_h(v,v) &= \|\nabla v \|^2_\Omega + \| v \|^2_{s_{h,\tau}} - 2(\nabla_n v,v )_{\partial \Omega} + \beta h^{-1} \|v \|^2_{\partial \Omega}
\\
&\geq \|\nabla v \|^2_\Omega + \| v \|^2_{s_{h,\tau}} - 2|(\nabla_n v,v )_{\partial \Omega}| + \beta h^{-1} \|v \|^2_{\partial \Omega}
\end{align}
Recalling (\ref{eq:nitsche-inverse}), we have 
\begin{equation}
h \|\nabla_n v \|^2_{\partial \Omega} 
\lesssim  \| \nabla v \|^2_\Omega + \|v \|^2_{s_{h,\tau_*}}
\end{equation}
for some fixed positive parameter $\tau_*$, and we may estimate the negative term as follows
\begin{align}
2|(\nabla_n v,v )_{\partial \Omega}| &\leq  2 \beta^{-1} h \| \nabla_n v \|^2_{\partial \Omega} 
+ 2^{-1} \beta h^{-1} \| v \|^2_{\partial \Omega}
\\
&\leq  2 \beta^{-1} C ( \| \nabla v \|^2_\Omega + \|v \|^2_{s_{h,\tau_*}} )
+ 2^{-1} \beta h^{-1} \| v \|^2_{\partial \Omega}
\end{align}
where $C$ is the hidden constant in (\ref{eq:nitsche-inverse}). Combining the estimates we get
\begin{align}
A_h(v,v) &= 2^{-1} ( \|\nabla v \|^2_\Omega + \| v \|^2_{s_{h,\tau}}  )
+ ( 2^{-1}  -  2 \beta^{-1} C ) \|\nabla v \|^2_\Omega 
\\
&\qquad 
+ ( 2^{-1} \| v \|^2_{s_{h,\tau_0}}  - 2 \beta^{-1} C\| v \|^2_{s_{h,\tau_*}} )
+ 2^{-1} \beta h^{-1} \|v \|^2_{\partial \Omega}
\\ \label{eq:nitsche-coer-a}
&\geq 2^{-1} ( \|\nabla v \|^2_\Omega + \| v \|^2_{s_{h,\tau}} + \beta h^{-1} \|v \|^2_{\partial \Omega} )
\end{align}
if $\beta$ is chosen such that $2^{-1}  -  2 \beta^{-1} C\geq 0$ and $\tau_0$ satisfies
$\tau_0 \geq 4 \beta^{-1} C$. Here we used the simple identity $\alpha_1 \| v \|^2_{s_{h,\tau_1}} + \alpha_2 \| v \|^2_{s_{h,\tau_2}}  = 
\| v \|^2_{s_{h,\alpha_1 \tau_1 + \alpha_2 \tau_2}}$ to conclude that 
\begin{align}
 2^{-1} \| v \|^2_{s_{h,\tau_0}}  - 2 \beta^{-1} C\| v \|^2_{s_{h,\tau_*}} =
  \| v \|^2_{s_{h,2^{-1} \tau_0- 2 \beta^{-1} \tau_*}}
\end{align}
Finally, we use the fact that $\tau\geq \tau_0 \geq c \beta^{-1} \tau_*$ to conclude that 
\begin{align}
&\|\nabla v \|^2_\Omega + \| v \|^2_{s_{h,\tau}} 
\geq 
\|\nabla v \|^2_\Omega + \| v \|^2_{s_{h,c \beta^{-1} \tau_*}}
\geq  
\|\nabla v \|^2_\Omega + c \beta^{-1} \| v \|^2_{s_{h, \tau_*}}
\\
&\qquad 
\geq  \label{eq:nitsche-coer-b}
\min(1, c \beta^{-1}) ( \|\nabla v \|^2_\Omega + \| v \|^2_{s_{h, \tau_*}} )
 \gtrsim 
 \beta^{-1} h \| \nabla_n v \|_{\partial \Omega}^2
 \end{align}
where we used (\ref{eq:nitsche-inverse}) and the estimate $\min(1,c\beta^{-1})
\gtrsim \beta^{-1}$ since $\beta \geq \beta_0>0$. Together the estimates (\ref{eq:nitsche-coer-a}) and (\ref{eq:nitsche-coer-b}) prove the desired result.
\end{proof}
\paragraph{Acknowledgements.}
This research was supported in part by the Swedish Research
Council Grants Nos.\  2017-03911, 2018-05262,  2021-04925,  and the Swedish
Research Programme Essence. EB acknowledges support from EPSRC, grant EP/P01576X/1.

\bibliographystyle{abbrv}
\footnotesize{
\bibliography{ref}
}

\newpage
\begin{figure}
	\begin{center}
	\includegraphics[scale=0.2]{./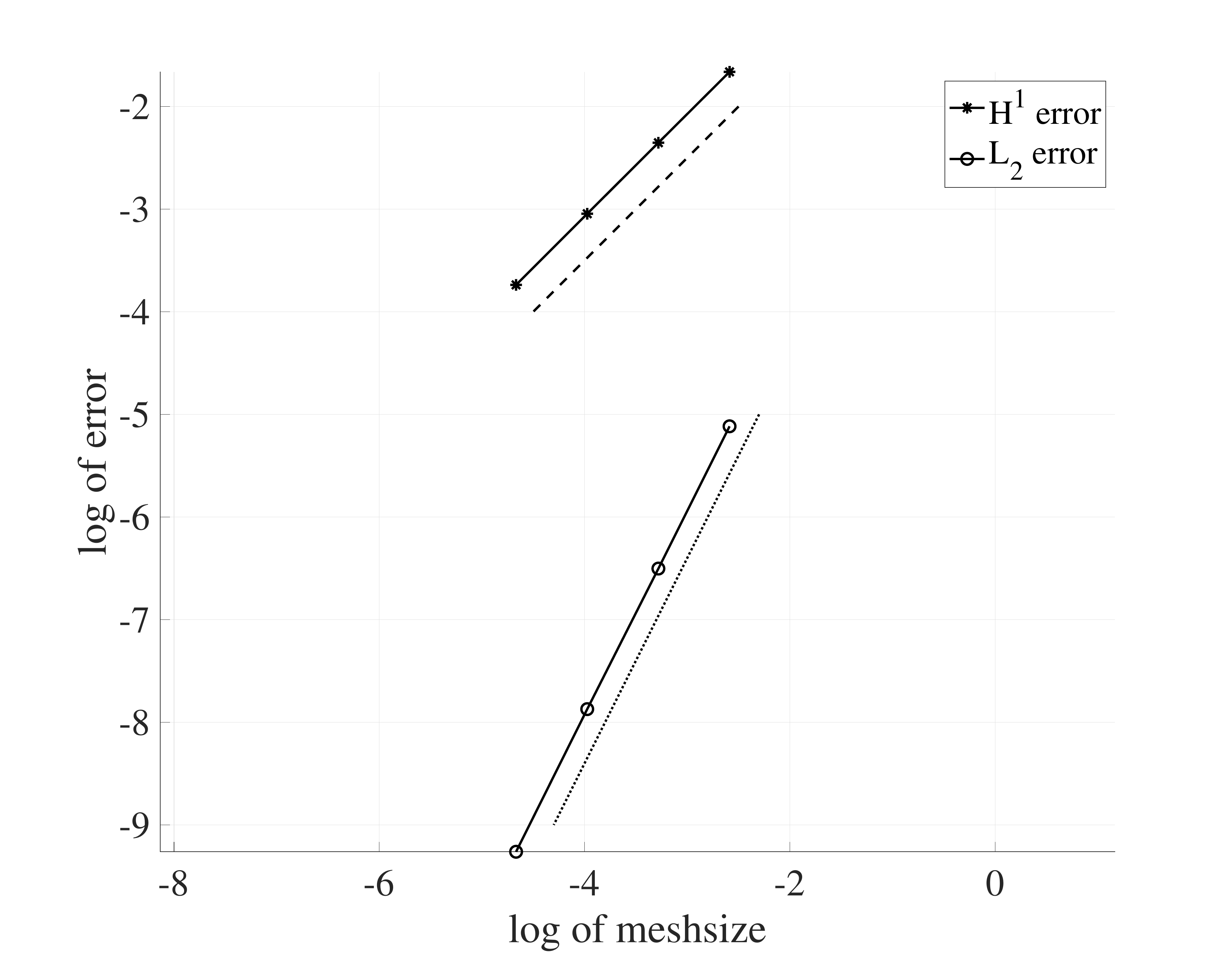}\includegraphics[scale=0.2]{./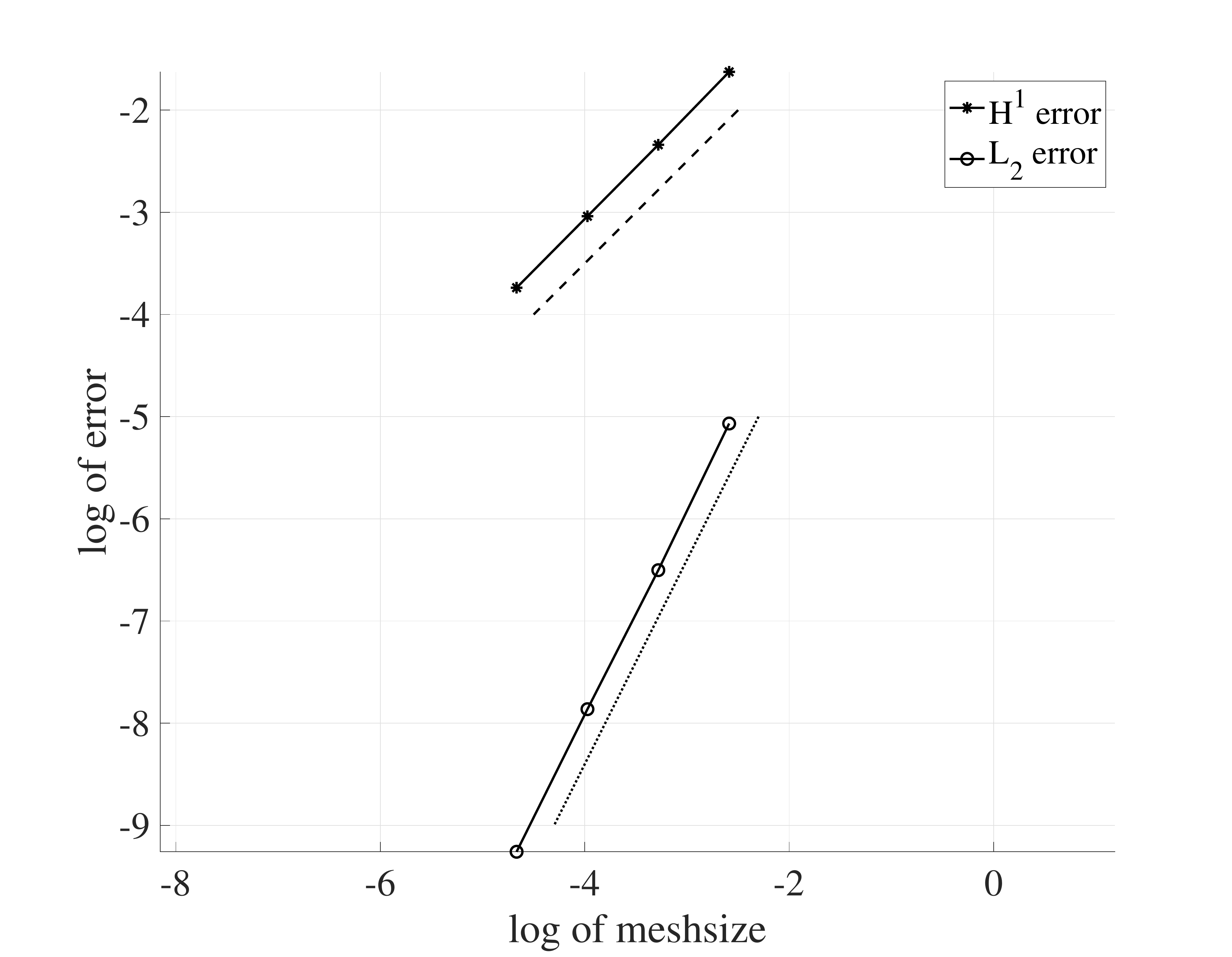}
	\end{center}
\caption{Convergence for point penalty with $\tau = 10^{-1}$ (left) and $\tau=10$ (right).\label{point:conv1}}
\end{figure}
\begin{figure}
	\begin{center}
	\includegraphics[scale=0.2]{./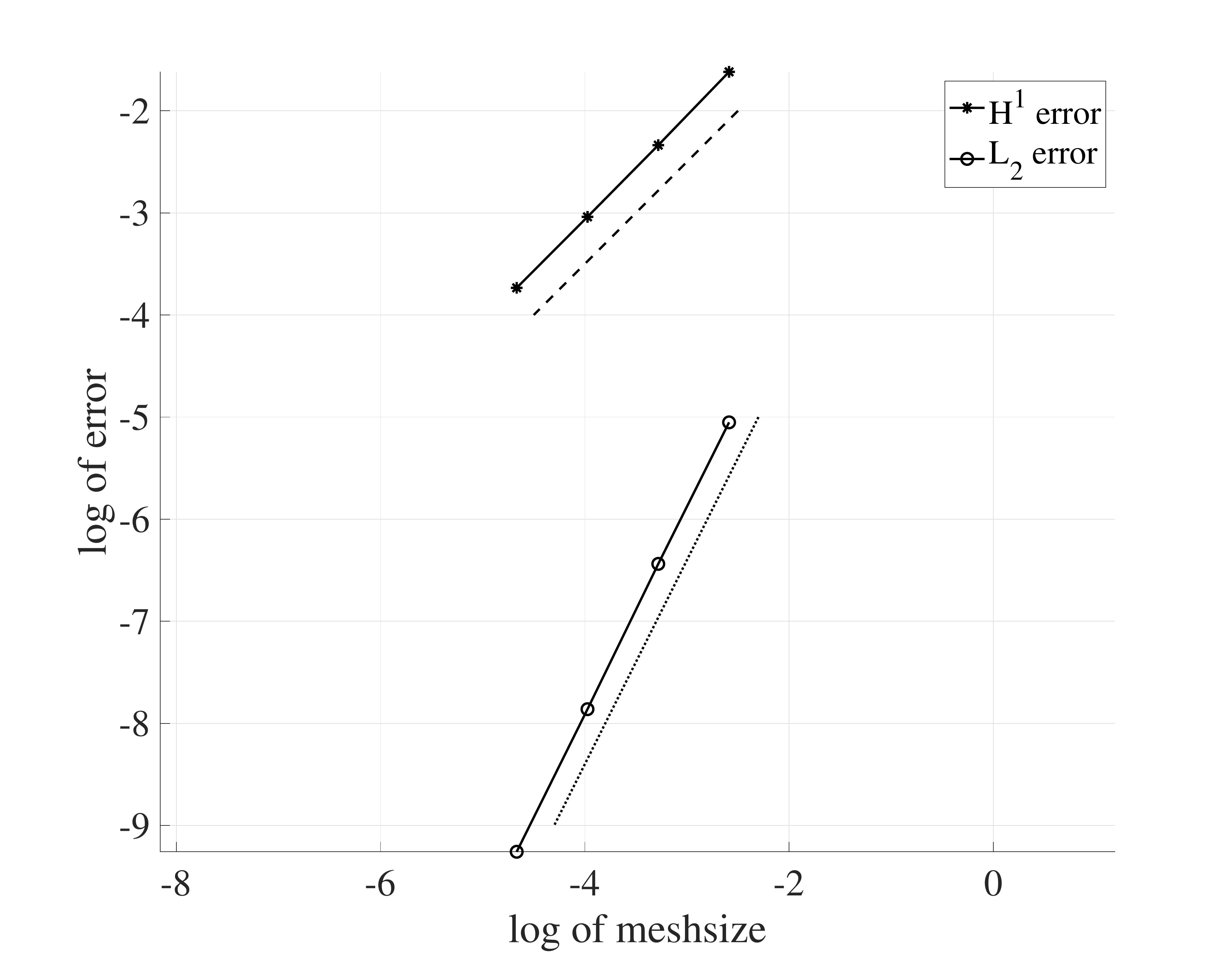}
	\end{center}
\caption{Convergence for point penalty with $\tau = 10^{3}$.\label{point:conv2}}
\end{figure}
\begin{figure}
	\begin{center}
	\includegraphics[scale=0.2]{./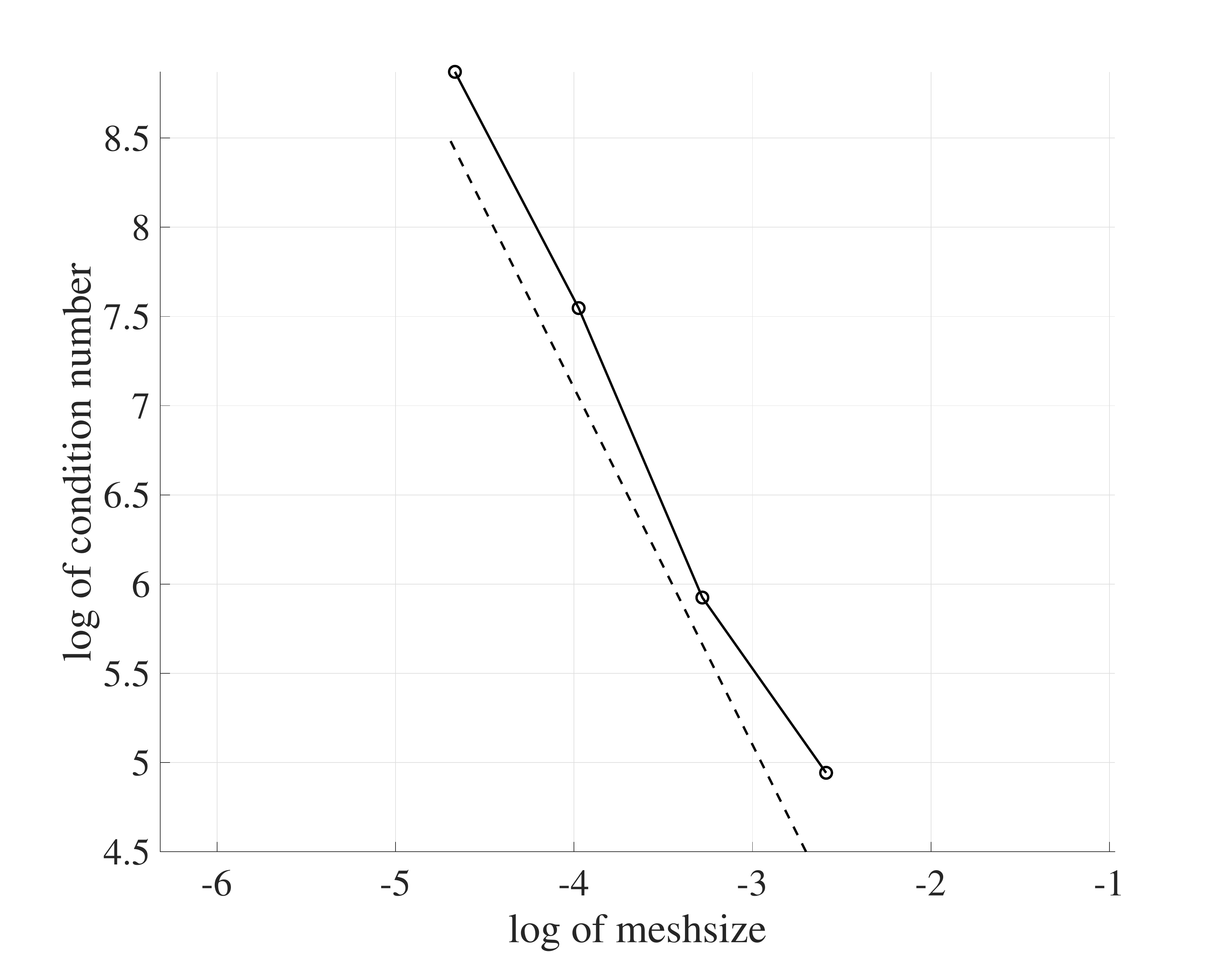}\includegraphics[scale=0.2]{./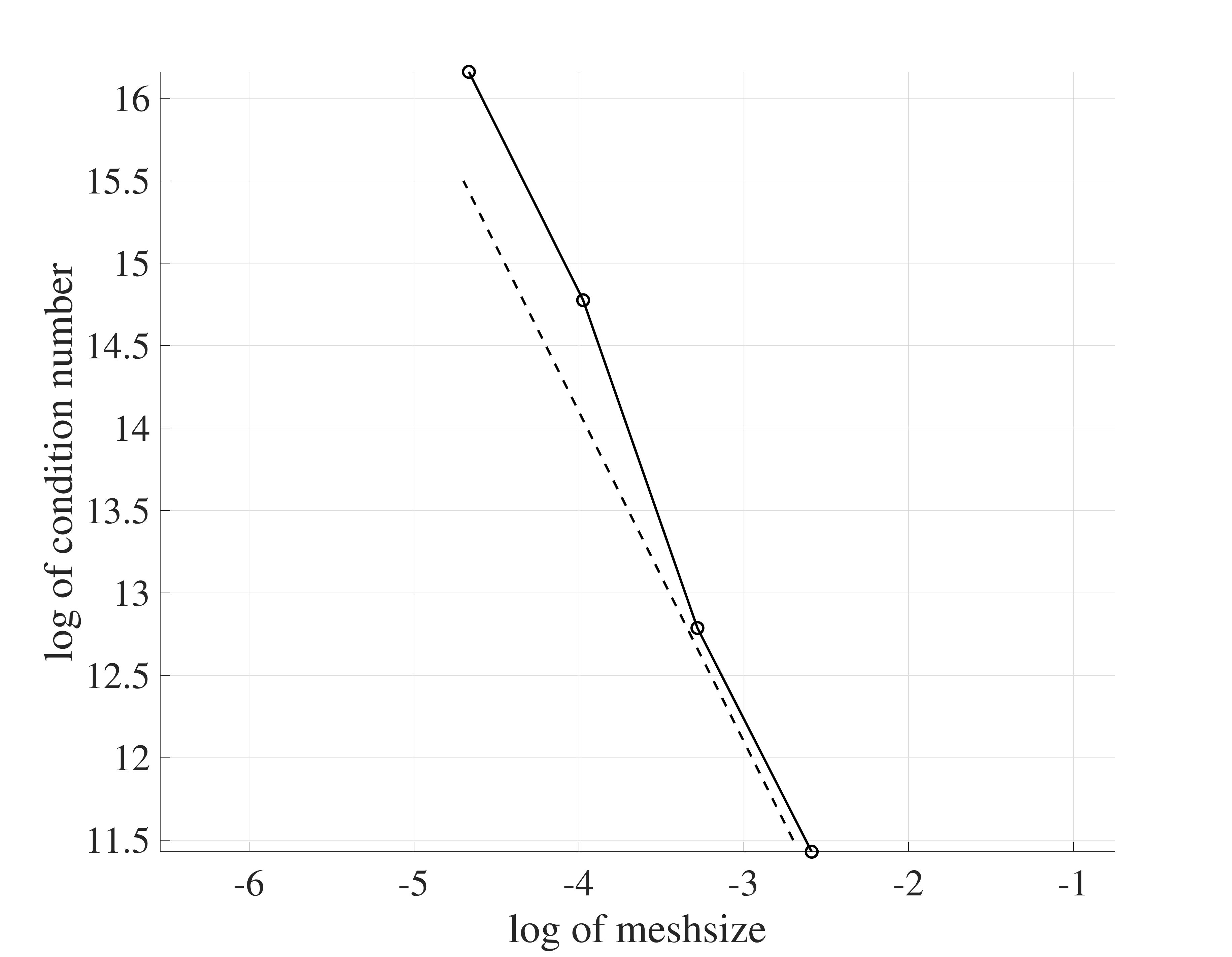}
	\end{center}
\caption{Conditioning for point penalty with $\tau = 10^{-1}$ (left) and $\tau=10^3$ (right).\label{point:cond}}
\end{figure}
\begin{figure}
	\begin{center}
	\includegraphics[scale=0.2]{./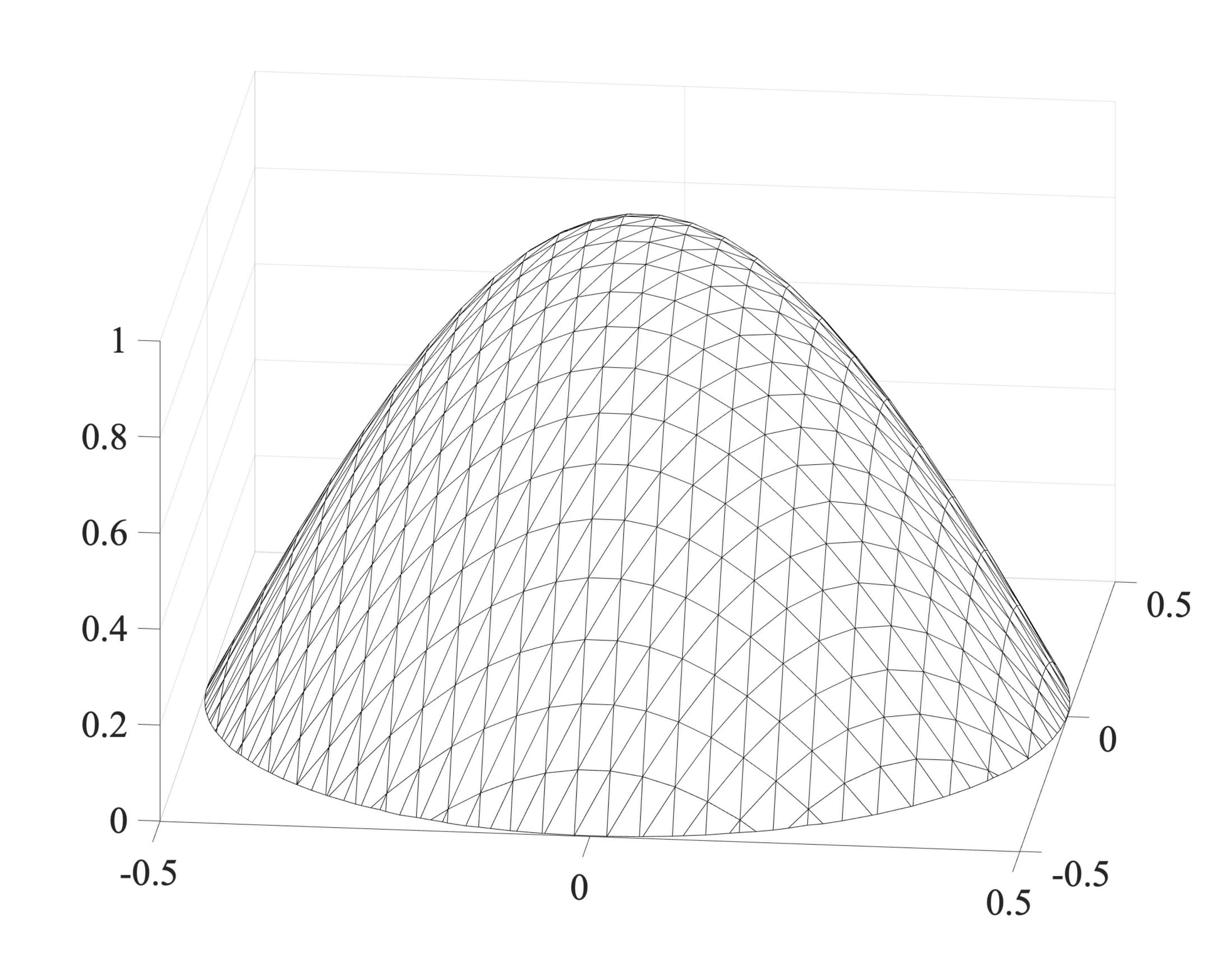}\includegraphics[scale=0.2]{./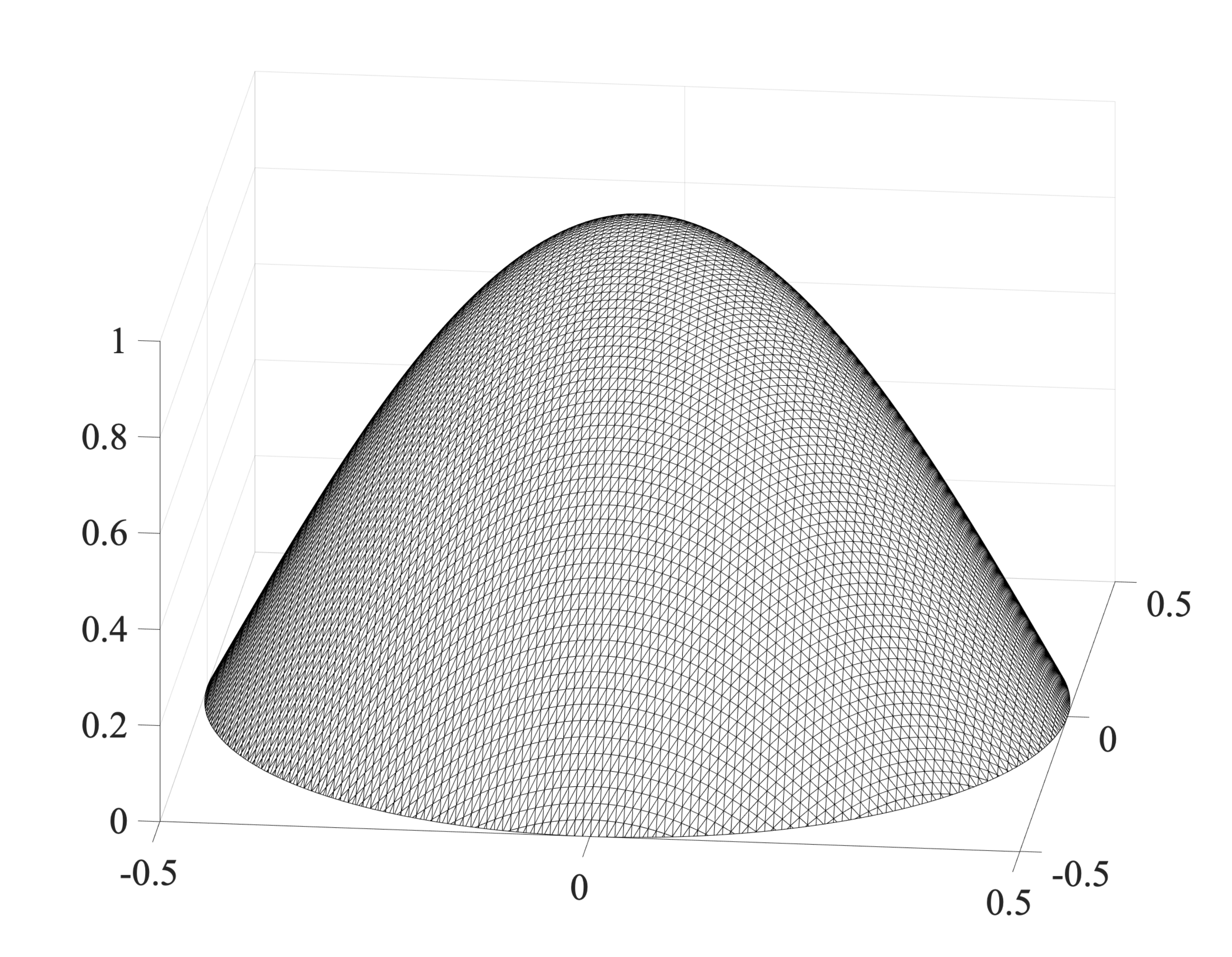}
	\end{center}
\caption{Elevation of the computed solution on a coarse and on a fine mesh for point penalty with $\tau=10^3$.\label{point:elev}}
\end{figure}
\begin{figure}
	\begin{center}
	\includegraphics[scale=0.2]{./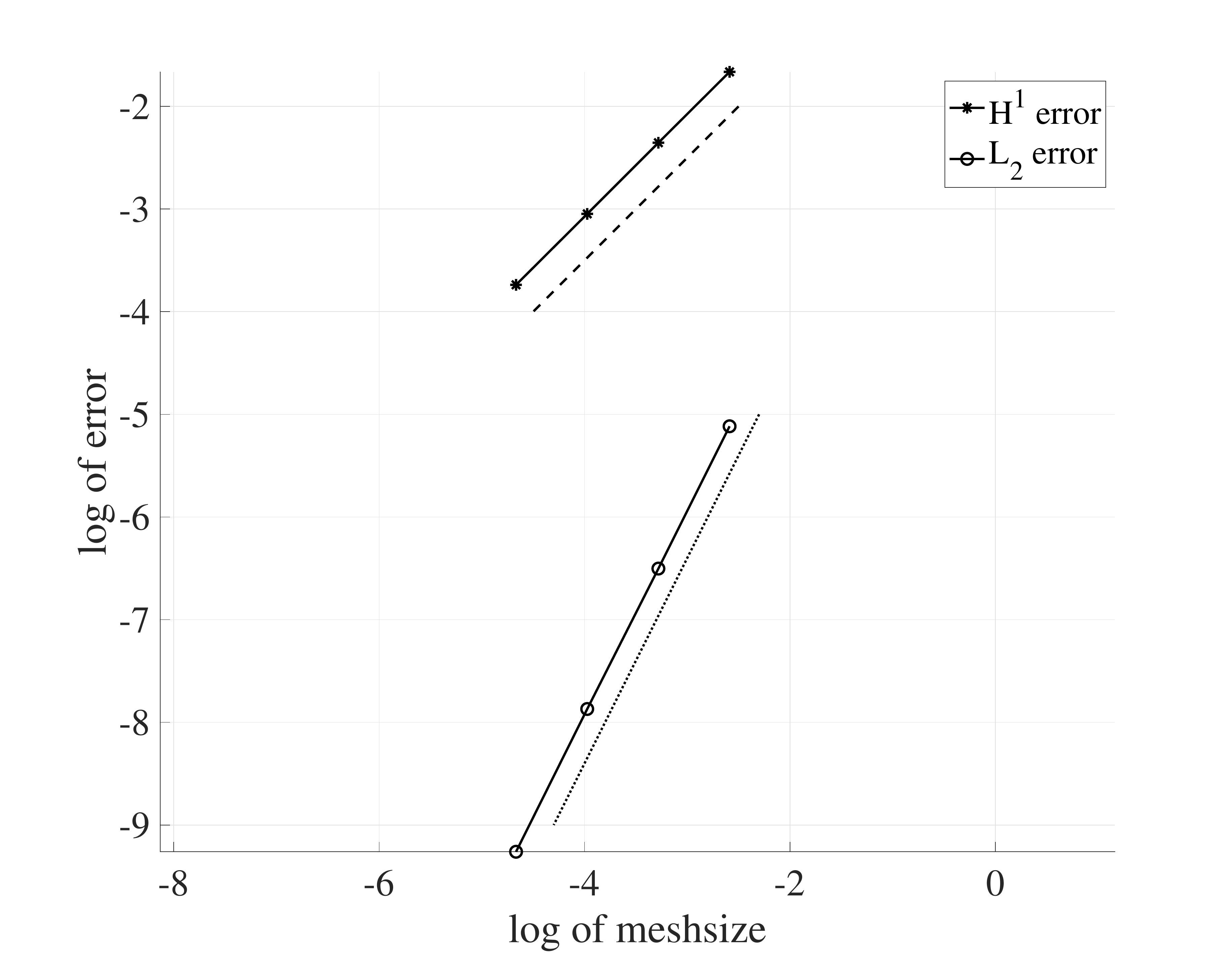}\includegraphics[scale=0.2]{./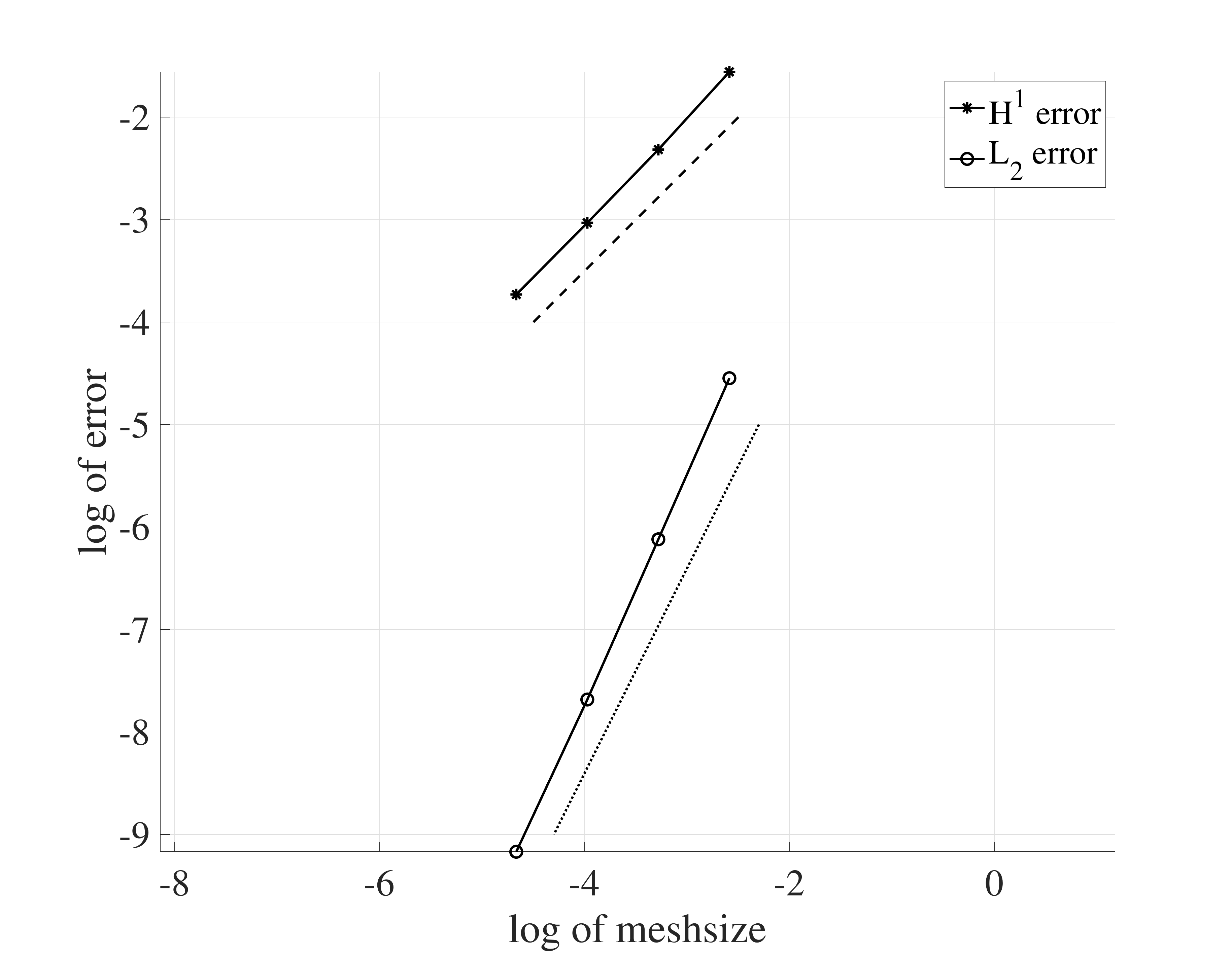}
	\end{center}
\caption{Convergence for $L2$ gradient penalty with $\tau = 10^{-1}$ (left) and $\tau=10$ (right).\label{l2:conv1}}
\end{figure}
\begin{figure}
	\begin{center}
	\includegraphics[scale=0.2]{./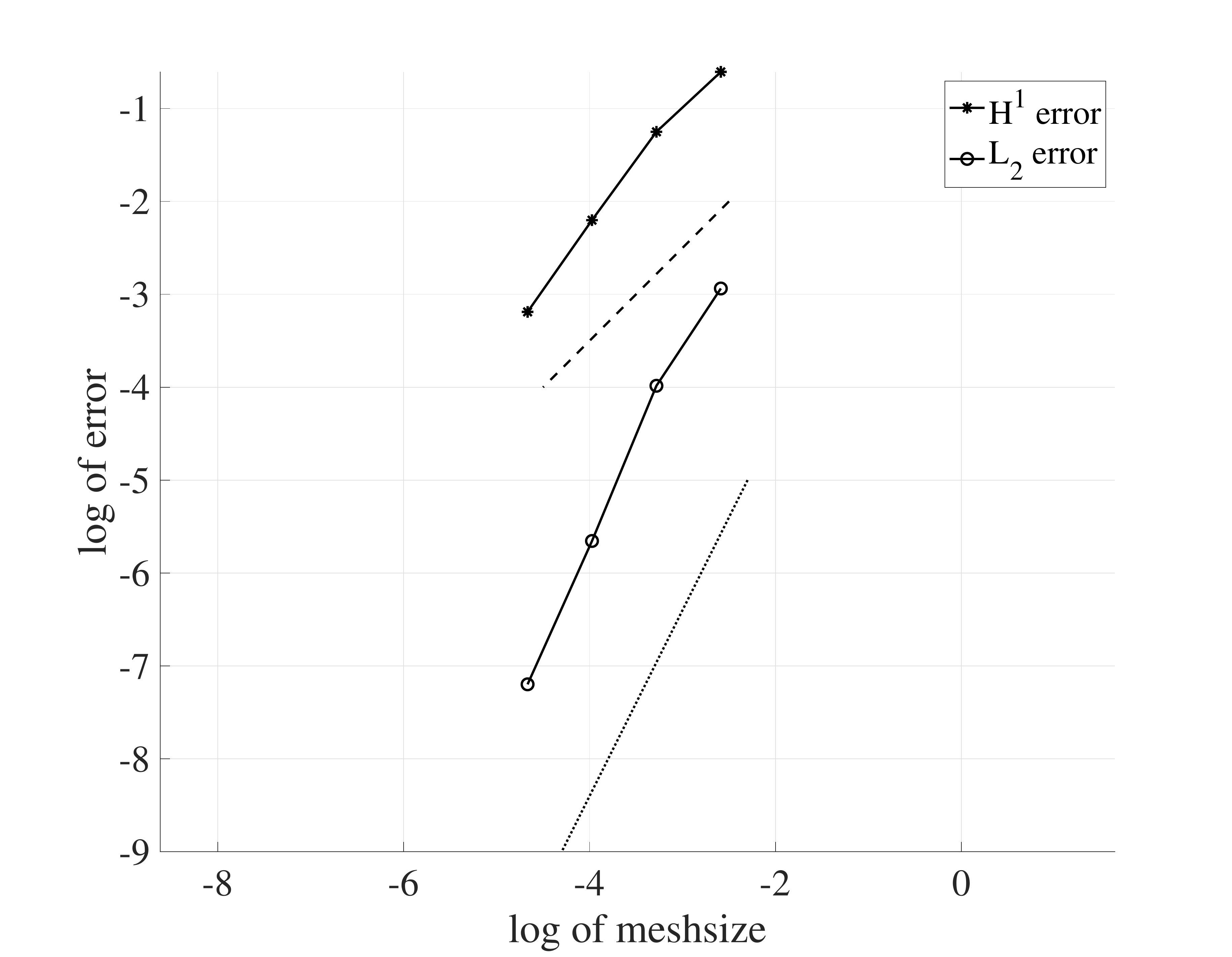}
	\end{center}
\caption{Convergence for $L^2$ gradient penalty with $\tau = 10^{3}$.\label{l2:conv2}}
\end{figure}
\begin{figure}
	\begin{center}
	\includegraphics[scale=0.2]{./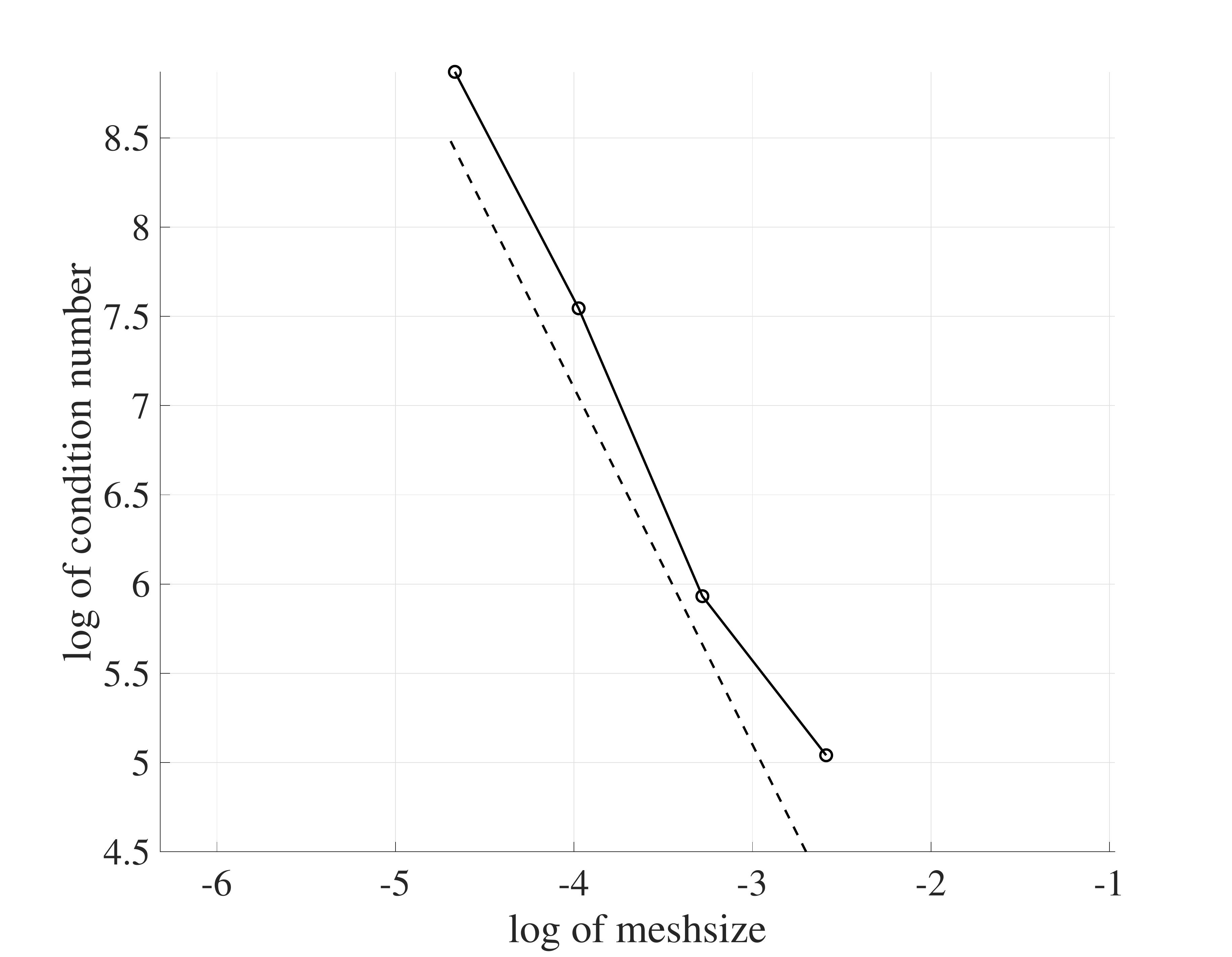}\includegraphics[scale=0.2]{./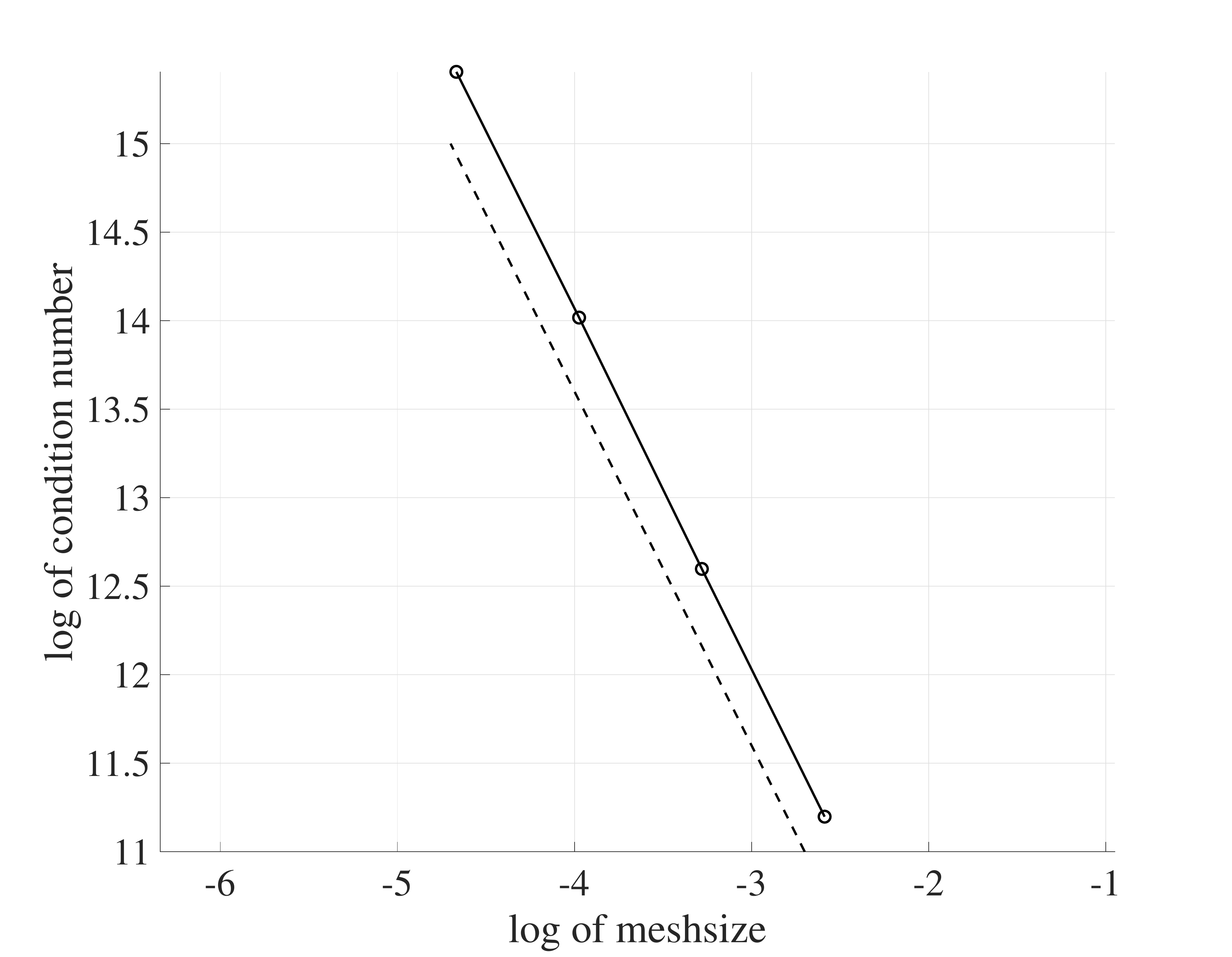}
	\end{center}
\caption{Conditioning for $L^2$ gradient penalty with $\tau = 10^{-1}$ (left) and $\tau=10^3$ (right).\label{l2:cond}}
\end{figure}
\begin{figure}
	\begin{center}
	\includegraphics[scale=0.2]{./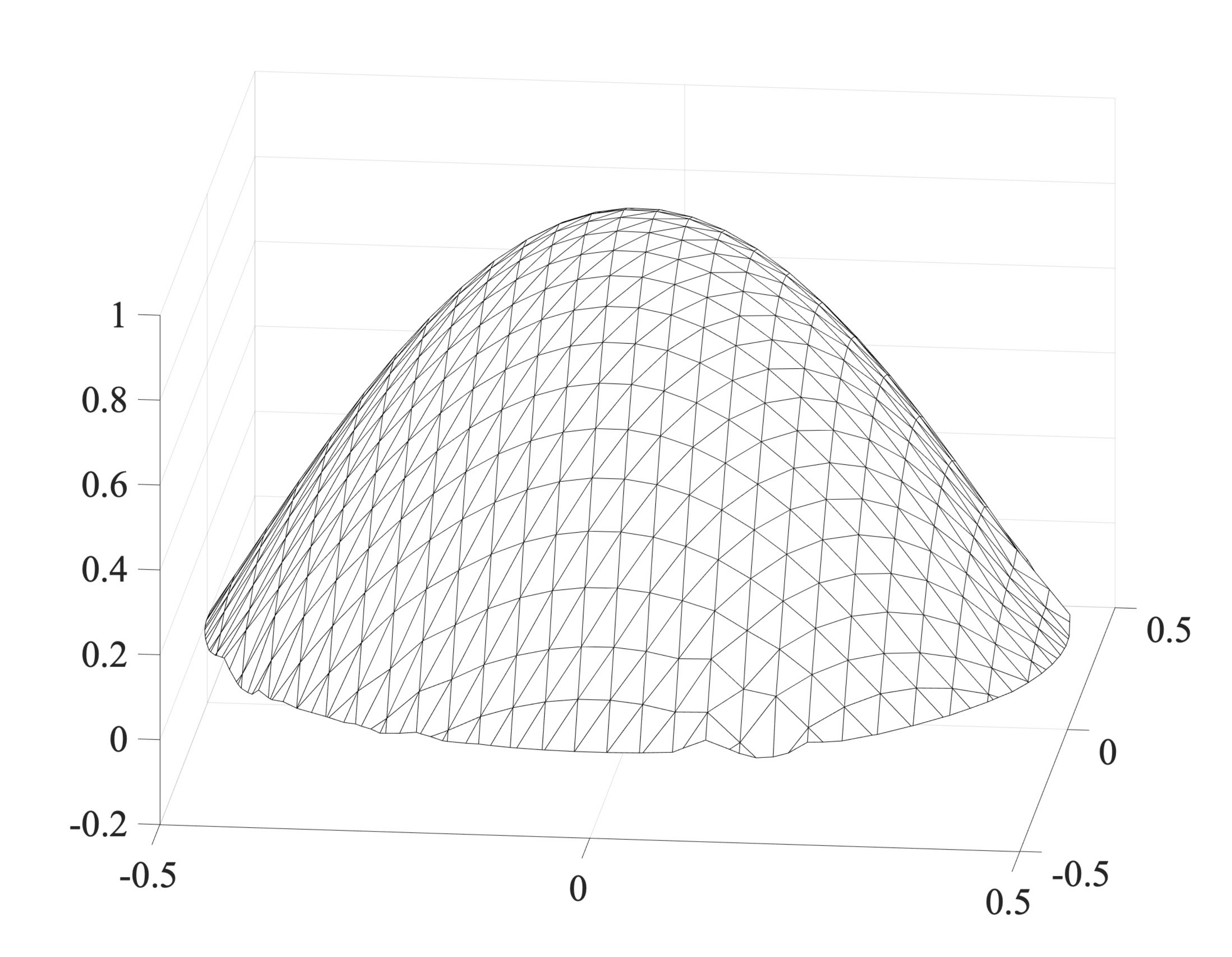}\includegraphics[scale=0.2]{./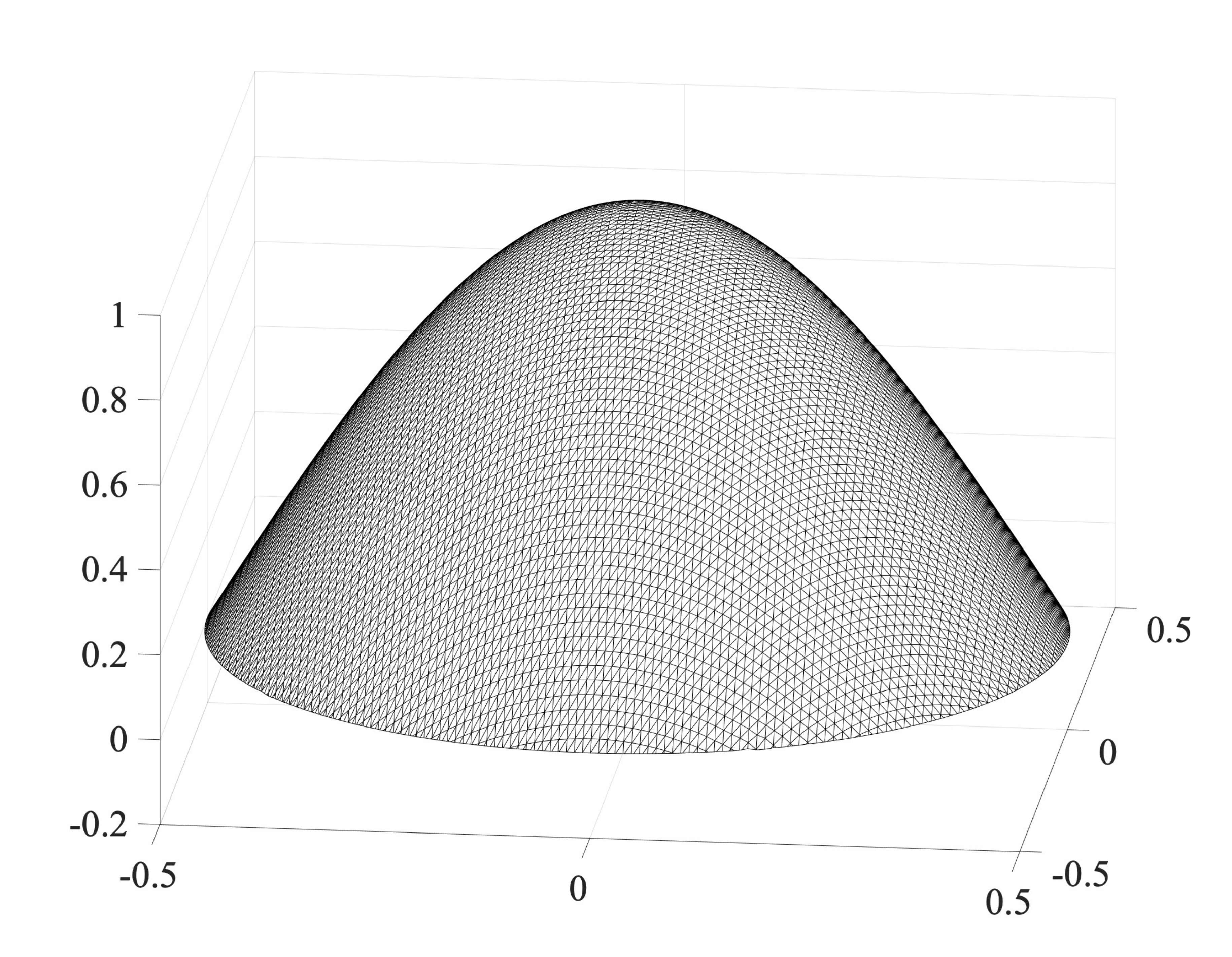}
	\end{center}
\caption{Elevation of the computed solution on a coarse and on a fine mesh for $L^2$ gradient penalty with $\tau=10^3$.\label{l2:elev}}
\end{figure}
\begin{figure}
	\begin{center}
	\includegraphics[scale=0.2]{./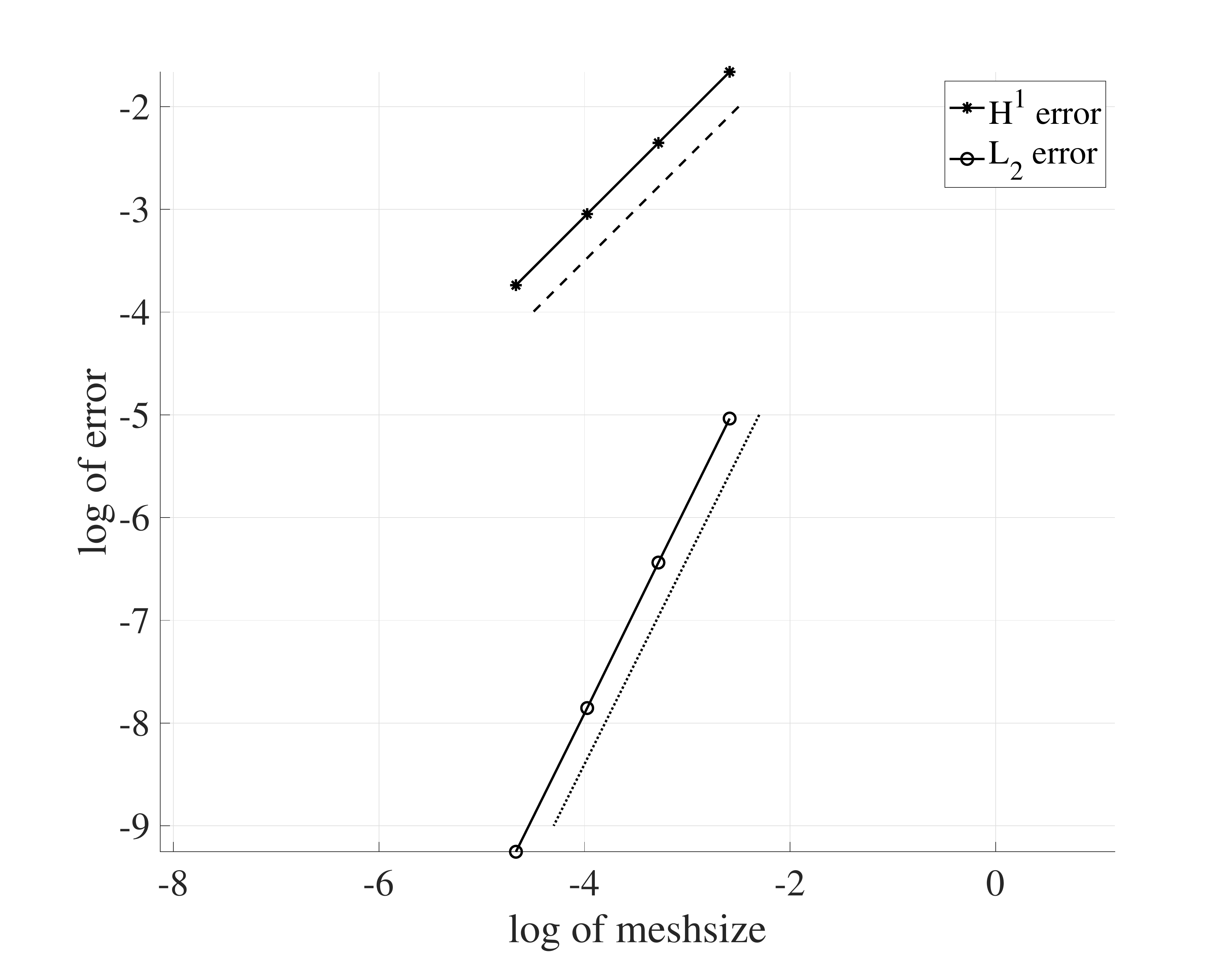}\includegraphics[scale=0.2]{./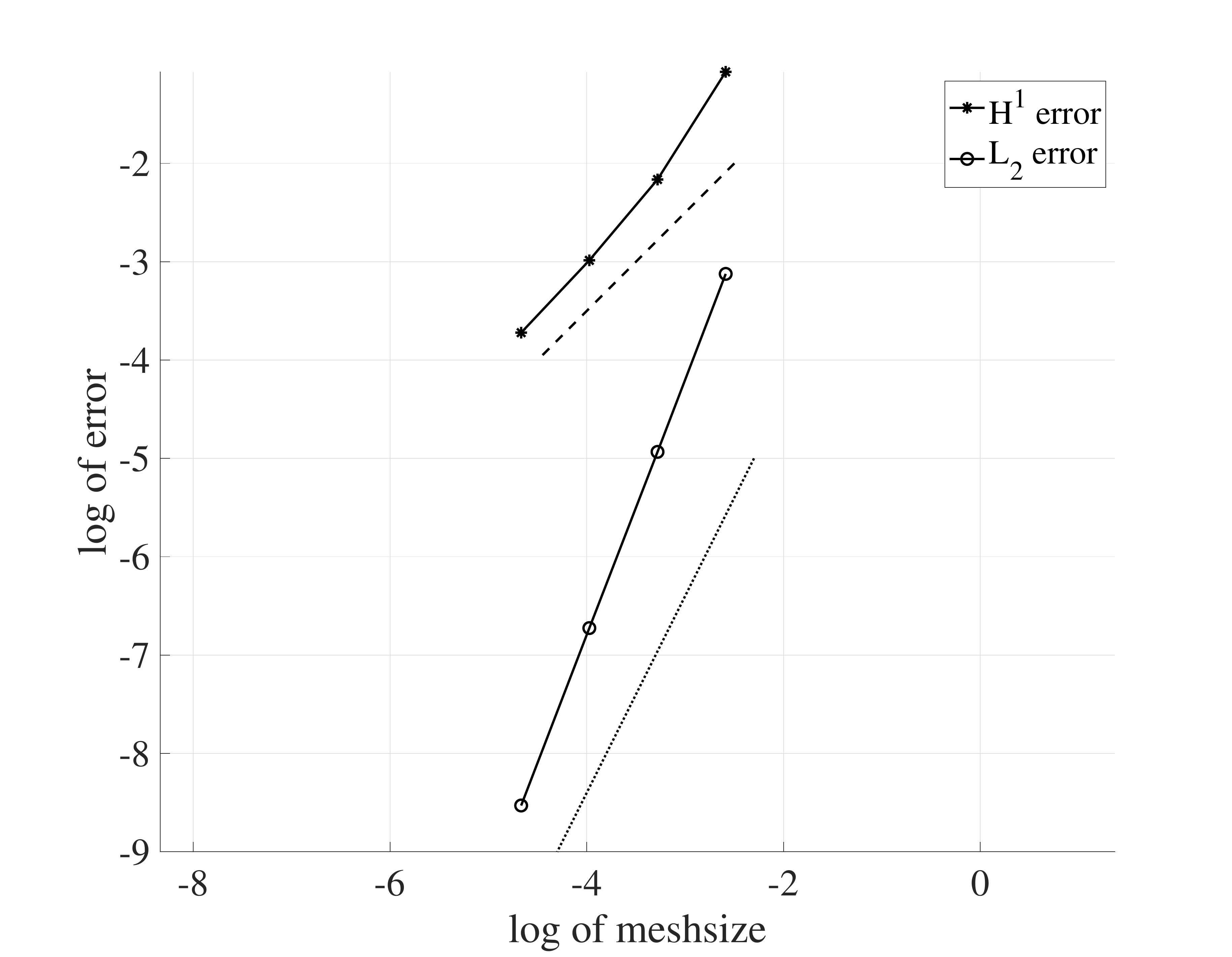}
	\end{center}
\caption{Convergence for face penalty with $\tau = 10^{-1}$ (left) and $\tau=10$ (right).\label{edge:conv1}}
\end{figure}
\begin{figure}
	\begin{center}
	\includegraphics[scale=0.2]{./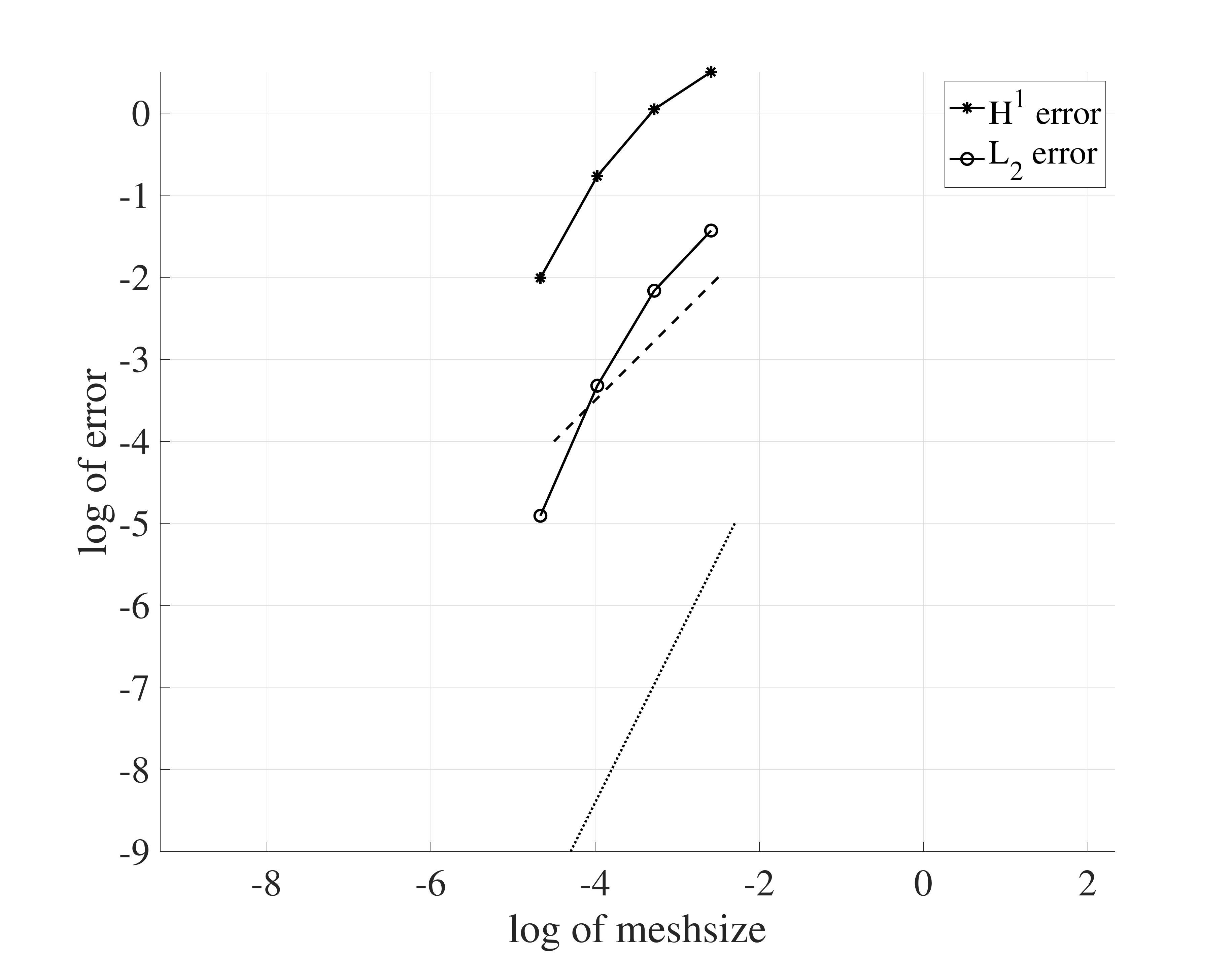}
	\end{center}
\caption{Convergence for face penalty with $\tau = 10^{3}$.\label{edge:conv2}}
\end{figure}
\begin{figure}
	\begin{center}
	\includegraphics[scale=0.2]{./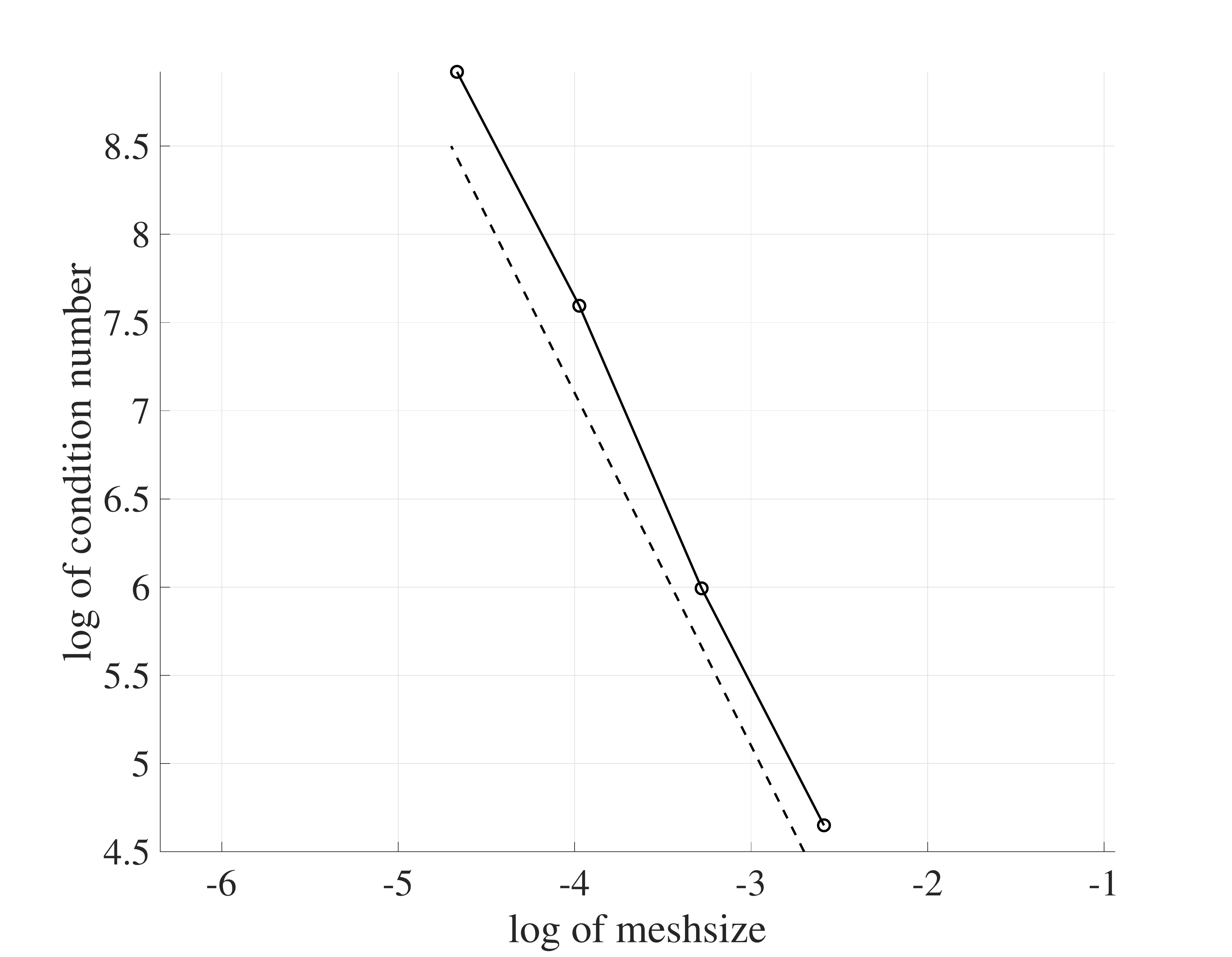}\includegraphics[scale=0.2]{./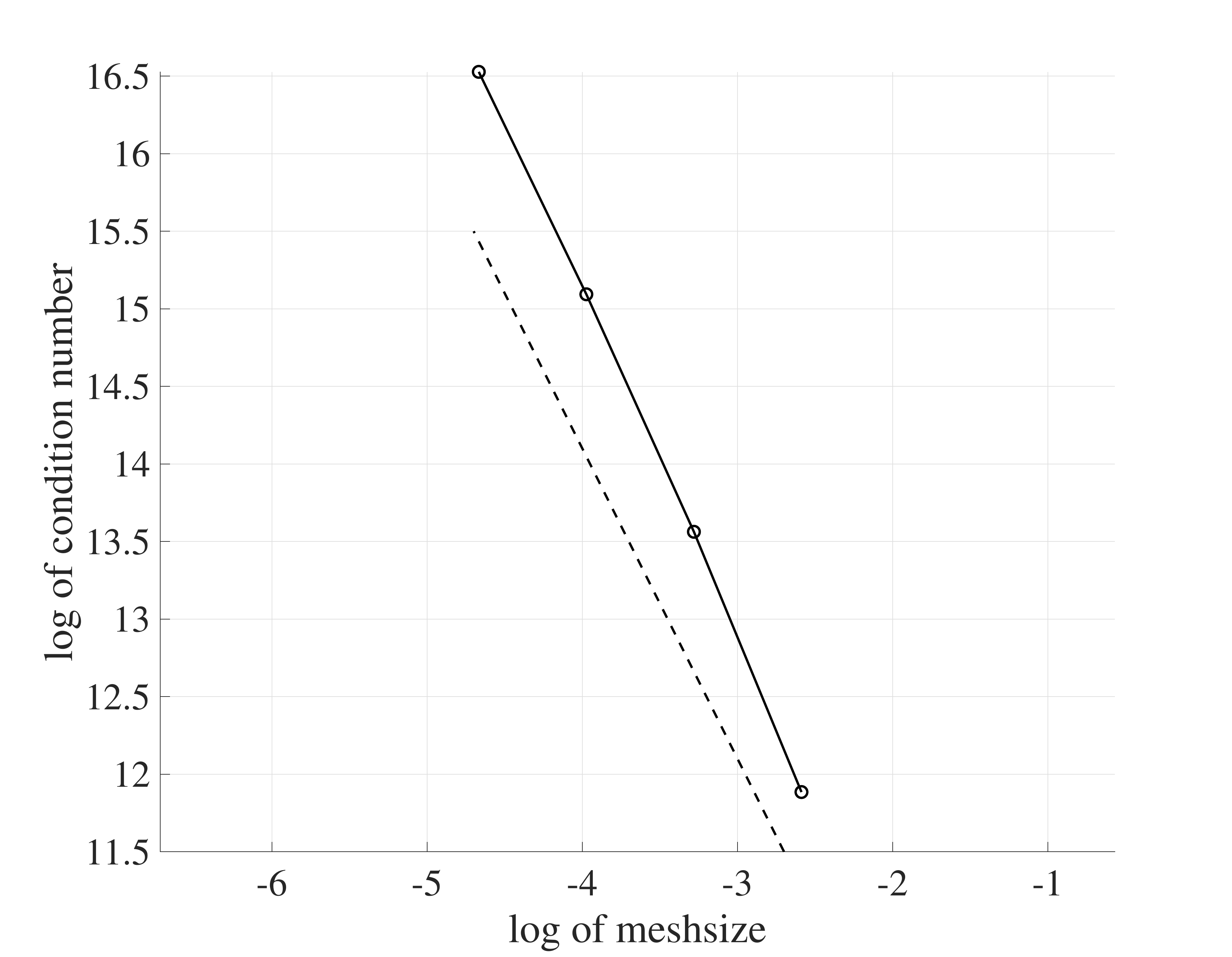}
	\end{center}
\caption{Conditioning for face penalty with $\tau = 10^{-1}$ (left) and $\tau=10^3$ (right).\label{edge:cond}}
\end{figure}
\begin{figure}
	\begin{center}
	\includegraphics[scale=0.2]{./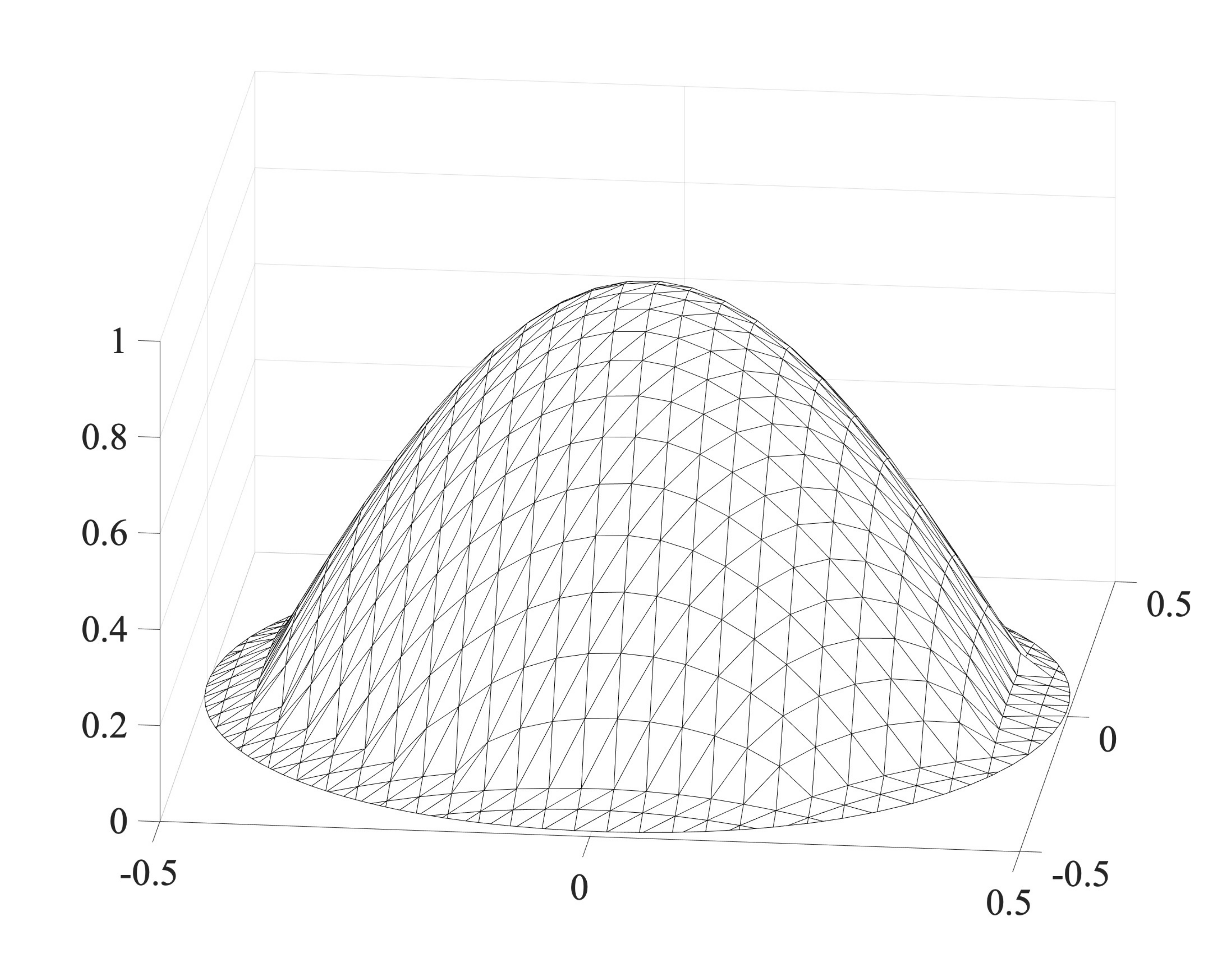}\includegraphics[scale=0.2]{./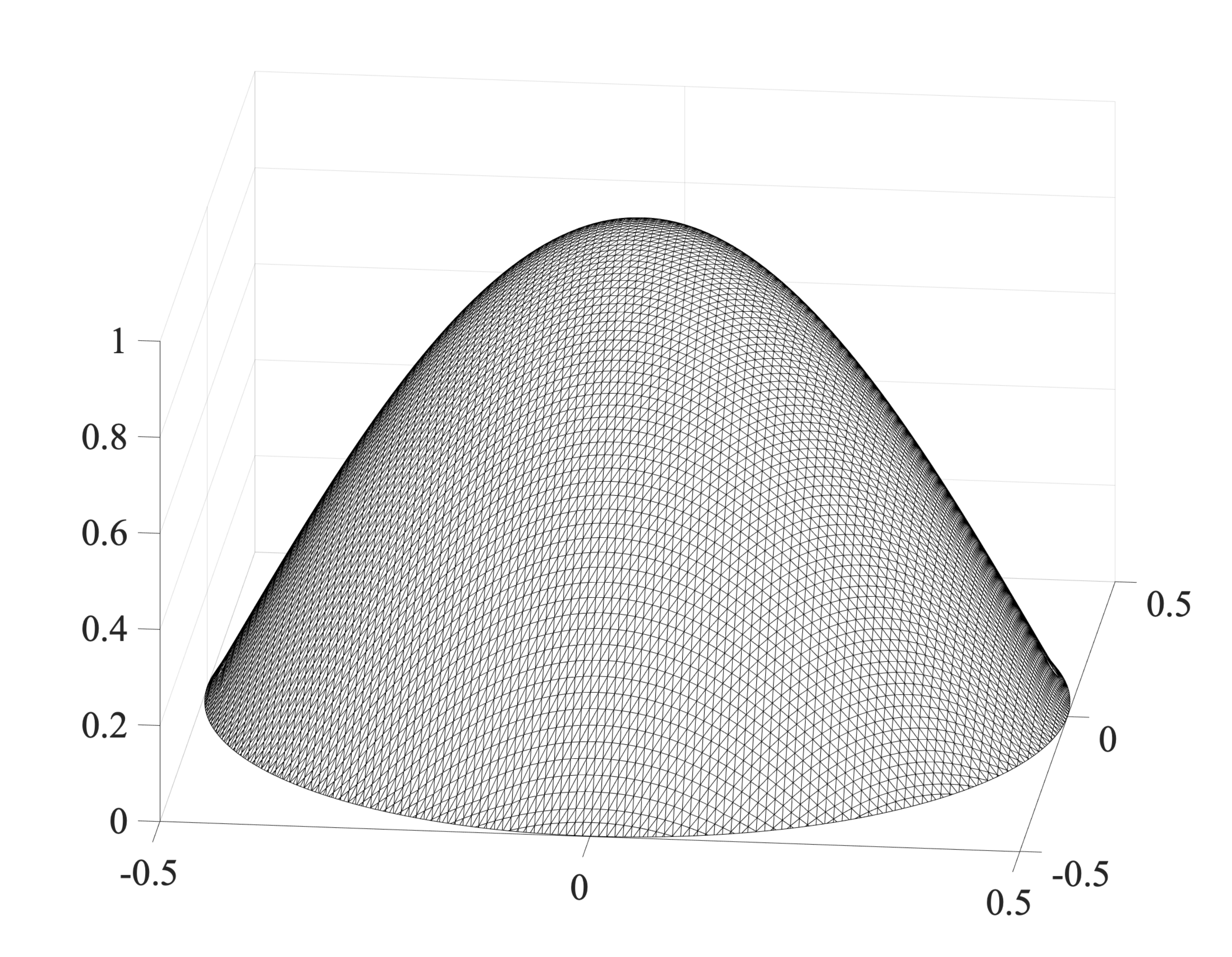}
	\end{center}
\caption{Elevation of the computed solution on a coarse and on a fine mesh for face penalty with $\tau=10^3$.\label{edge:elev}}
\end{figure}

\end{document}